\numberwithin{equation}{section}
\declaretheoremstyle[
  bodyfont=\normalfont\itshape,
  headformat=\NAME\ \NUMBER\NOTE,
]{myplain}
\declaretheoremstyle[
  headformat=\NAME\ \NUMBER\NOTE,
]{mydefinition}
\newcommand{\envqed}{{\lower-0.3ex\hbox{$\triangleleft$}}}
\declaretheorem[style=myplain,numberwithin=section]{theorem}
\declaretheorem[style=myplain,numberlike=theorem]{lemma}
\declaretheorem[style=mydefinition,numberlike=theorem,qed=\envqed]{definition}
\declaretheorem[style=mydefinition,numberlike=theorem,qed=\envqed]{remark}
\declaretheorem[style=mydefinition,numberlike=theorem,qed=\envqed]{example}
\newcommand{\tr}{\textrm}
\newcommand{\mr}{\mathrm}
\newcommand{\R}{\mathbb{R}}
\newcommand{\cU}{\mathcal{U}}
\newcommand{\cM}{\mathcal{M}}
\newcommand{\cI}{\mathcal{I}}
\newcommand{\cV}{\mathcal{V}}
\newcommand{\cE}{\mathcal{E}}
\DeclareMathOperator{\sech}{sech}
\DeclareMathOperator{\diag}{diag}
\newcommand{\orcid}[1]{ORCID:~\href{https://orcid.org/#1}{#1}}
\newenvironment{keywords}{\par\textbf{Key words.}}{\par}
\newenvironment{AMS}{\par\textbf{AMS subject classification.}}{\par}
\title{Structure-Preserving Numerical Methods for Two Nonlinear Systems of Dispersive Wave Equations}
\author[1]{Joshua~Lampert\thanks{\orcid{0009-0007-0971-6709}}}
\affil[1]{Department of Mathematics, University of Hamburg, Germany}
\author[2]{Hendrik~Ranocha\thanks{\orcid{0000-0002-3456-2277}}}
\affil[2]{Institute of Mathematics, Johannes Gutenberg University Mainz, Germany}
\date{September 8, 2025}
\begin{document}

\maketitle

\begin{abstract}
\noindent
  We use the general framework of summation-by-parts operators
to construct conservative, energy-stable, and well-balanced
semidiscretizations of two different nonlinear systems of
dispersive shallow water equations with varying bathymetry:
(i) a variant of the coupled Benjamin-Bona-Mahony (BBM) equations
and (ii) a recently proposed model by Svärd and Kalisch (2025) with
enhanced dispersive behavior. Both models share the property of
being conservative in terms of a nonlinear invariant, often
interpreted as energy. This property is preserved exactly
in our novel semidiscretizations. To obtain fully-discrete
energy-stable schemes, we employ the relaxation method.
Our novel methods generalize energy-conserving
methods for the BBM-BBM system to variable bathymetries. Compared to the
low-order, energy-dissipative finite volume method proposed by Svärd and Kalisch,
our schemes are arbitrary high-order accurate, energy-conservative or -stable,
can deal with periodic and reflecting boundary conditions, and can be any
method within the framework of summation-by-parts operators including
finite difference and finite element schemes.
We present improved numerical properties of our methods in some test cases.

\end{abstract}

\begin{keywords}
  summation-by-parts operators,
  energy stability,
  entropy stability,
  dispersive wave equations,
  relaxation schemes,
  structure-preserving methods
\end{keywords}

\begin{AMS}
  65M06, 
  65M20, 
  65M70  
\end{AMS}

\section{Introduction}
\label{sect:introduction}

We develop numerical methods for nonlinear dispersive wave equations that are based on the shallow water equations (SWEs). The SWEs in one spatial dimension are usually written in the form of a hyperbolic balance law \cite{leveque2002finite}
\begin{subequations}
	\begin{align}
		h_t + (hv)_x &= 0,\label{eq:swe-mass}\\
		(hv)_t + \left(\frac{1}{2}gh^2 + hv^2\right)_x &= -ghb_x,\label{eq:swe-momentum}
	\end{align}
\end{subequations}
where $h > 0$ is the water height, $v$ the velocity, and $g > 0$ denotes the gravitational acceleration. The topography of the bottom (bathymetry) is denoted as $b(x)$, see also \Cref{fig:bathymetry} for a sketch.
The different notations that are used in some of the main sources are
summarized in \Cref{table:notation}.

\begin{figure}[!h]
	\centering
	\includegraphics[width=0.55\textwidth]{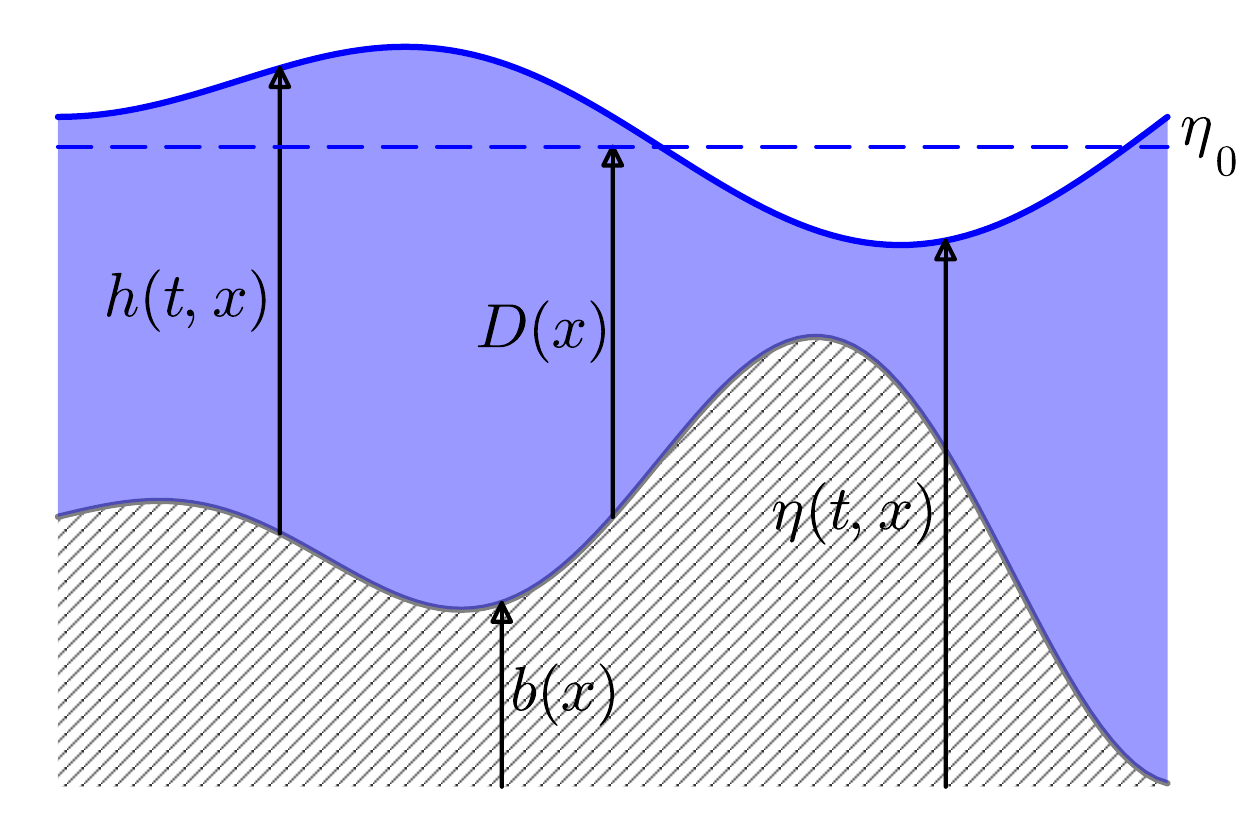}
	\caption{Water height $h$, reference total water height $\eta_0$, and bathymetry $b$. The still water depth is $D = \eta_0 - b$ and the total water height is $\eta = h + b$}
	\label{fig:bathymetry}
\end{figure}

\begin{table}[!htbp]
	\centering
	\caption{Notation used for (dispersive) shallow water models}
	\label{table:notation}
	\begin{tabular}{ m{2.2cm}m{2.4cm}m{2.2cm}m{2.4cm}m{2.3cm}m{2.2cm}m{2.2cm} }
		\toprule
		& this article & Israwi et al.\ (2021) \cite{israwi2021regularized} & Mitsotakis et al.\ (2021) \cite{mitsotakis2021conservative} & Ranocha et al.\ (2021) \cite{ranocha2021broad} & Svärd et al.\ (2025) \cite{svard2025novel}\\
		\midrule\midrule
		water height & $h = \eta + D - \eta_0$ & $\eta + D$ & $\eta + D$ & $\eta + 1$ & $d$\\
		\midrule
		total water height & $\eta = h + b$ & $\eta$ & $\eta$ & $\eta$ & $d + b$\\
		\midrule
		still water height & $\eta_0$ & $0$ & $0$ & $0$ & $H$\\
		\midrule
		still water depth & $D$ & $D$ & $D$ & $1$ & $h$\\
		\midrule
		bathymetry & $b = \eta_0 - D$ & $-D$ & $-D$ & $-1$ & $b = H - h$\\
		\midrule
		velocity & $v$ & $\bm u$ & $u$ & $u$ & $v$\\
		\midrule
		discharge & $P = hv$ & -- & -- & -- & $P = dv$\\
		\bottomrule
	\end{tabular}
\end{table}

The SWEs are widely used to describe phenomena with large wavelengths and comparably small water heights, such as tsunamis generated by (bigger) earthquakes or inundations. However, if we consider waves with shorter wavelengths in the deep sea, the effect of dispersion becomes pertinent \cite{glimsdal2013dispersion}. This concerns, e.g., waves generated by submarine landslides. Yet, dispersion is not modeled by the SWEs. Therefore, alternative models that are able to describe dispersive behavior need to be constructed. On the other hand, we would like to keep at least two properties of the SWEs. First, the special nonlinearity of the SWEs has proven to describe the evolution of oceanic water waves well.
Second, the dispersive model should preserve the physically interpretable energy/entropy (in)equality. Thus, we focus on models that build on the SWEs, but add additional terms that are capable to describe dispersion, while satisfying an entropy condition. Specifically, we look at two different models: A system of two coupled Benjamin-Bona-Mahony (BBM) equations with variable bottom topography \cite{israwi2021regularized} called BBM-BBM equations and a dispersive system recently proposed by Svärd and Kalisch \cite{svard2025novel}.

There are several other dispersive shallow water models, see \cite[Chapter 6]{kim2014finite} for a brief overview. Widely used dispersive equations are the models by Peregrine \cite{peregrine1967long}, by Madsen and S{\o}rensen \cite{madsen1992newform}, and the related model of Schäffer and Madsen \cite{schaeffer1995further}, see also \cite{kim2017boussinesq,berger2021towards}. A further important model that is closely related to the BBM-BBM equations is the Nwogu system \cite{nwogu1993alternative}. All of these models can be seen as optimization of a previous one, and share a similar structure: The dispersive terms are modeled as higher-order derivative terms.

All models mentioned above do not conserve the mathematical entropy function conserved by the SWEs \cite{israwi2021regularized}.
However, both the BBM-BBM model and the equations of Svärd and Kalisch mimic the entropic structure of the classical SWEs.
Thus, we concentrate on these two models to develop structure-preserving numerical methods,
which typically leads to physically more accurate numerical solutions
and can also improve stability properties \cite{ranocha2020hamiltonian,ranocha2020general}.

A different approach to the above-mentioned models is taken in \cite{escalante2020general}, where a dispersive shallow water model is formulated as a hyperbolic system of equations containing only first derivatives. Nonetheless, the authors of \cite{escalante2020general} show that the models by Peregrine and Madsen-S{\o}rensen can be formulated with their model.
There are also other dispersive shallow water models with an entropic structure,
e.g., the Serre-Green-Naghdi equations and hyperbolic approximations thereof
\cite{favrie2017rapid,busto2021high,guermond2022hyperbolic,ranocha2024structure} or the model studied in \cite{aissiouene2020two,bonnetbendhia2021pseudo}.
However, we concentrate on the BBM-BBM equations and the model of Svärd and Kalisch
in this article and leave the other models for future work.

To construct numerical methods that are entropy-conservative on a semidiscrete level (discrete in space, continuous in time), we utilize the framework of summation-by-parts (SBP) operators and split forms \cite{fisher2013discretely}.
This allows a systematic approach to obtain entropy-conservative and entropy-dissipative semidiscretizations.
The idea is to mimic the mathematical properties used on the continuous level (SBP operators mimic integration by parts, split forms mimic the product and chain rule) at the semidiscrete level.
To integrate the resulting ordinary differential equations in time, we employ relaxation Runge-Kutta (RRK) methods that preserve the entropic structure also for the fully-discrete solution \cite{ketcheson2019relaxation,ranocha2020relaxation}.

We focus on models with variable bottom topography (bathymetry). In this situation, the SWEs and their dispersive generalizations have
non-trivial steady solutions. By using consistent SBP operators, our numerical scheme will automatically preserve the most important
set of stationary solutions, i.e., the lake-at-rest solution. Thus, our numerical methods are well-balanced
in the fully wet case. We only consider one-dimensional models without wave breaking mechanisms explicitly included.

In the following \Cref{sect:dispersive-swe-models}, we introduce the governing
equations, starting with the classical SWEs and continuing with two dispersive
modifications. \Cref{sect:numerical-methods} starts with a brief introduction
to SBP operators to make this article sufficiently self-contained. The main
theoretical contributions of this work are Sections~\ref{sect:num-BBM-BBM} and
\ref{sect:num-svard-kalisch}, where we develop and analyze novel
structure-preserving numerical methods. Compared to existing methods
for the BBM-BBM system \cite{ranocha2021broad,mitsotakis2021conservative},
we generalize energy-conserving methods to variable bathymetries in
\Cref{sect:num-BBM-BBM}. In contrast to the low-order entropy-dissipative
methods of \cite{svard2025novel}, our novel schemes developed for the
Svärd-Kalisch model in \Cref{sect:num-svard-kalisch} are arbitrarily
high-order accurate and can be applied not only to periodic but also to
reflecting boundary conditions.
We present numerical results in \Cref{sect:results} to verify the theoretical
results. Finally, we summarize our work in \Cref{sect:conclusions}.

\section{Dispersive shallow water models}\label{sect:dispersive-swe-models}

First, we briefly review results about the SWEs and two systems of nonlinear dispersive wave
equations. The SWEs \eqref{eq:swe-mass}--\eqref{eq:swe-momentum} are often used to model water waves with a long wavelength compared to the water
depth and serve as a baseline model for this work. In this article, we consider one space dimension $x\in\R$
and periodic or reflecting boundary conditions.

The water height and the momentum (or discharge) $P = hv$ form the vector of conserved variables $\bm u = (h, P)^T: \R^+\times\R\to\cU$, where $\cU = \left\{(u_1, u_2)^T\in\R^2\,|\,u_1 > 0\right\}$. Written as a balance law
\begin{equation}\label{eq:balance-law}
	\bm u_t + \bm f(\bm u)_x = -\bm g(\bm u, z)z_x =: \bm s(\bm u, z),
\end{equation}
the flux function $\bm f$ and the source term $\bm s$ for the SWEs are
\begin{equation*}
	\bm f(\bm u) = \begin{pmatrix}hv\\\frac{1}{2}gh^2 + hv^2\end{pmatrix} = \begin{pmatrix}P\\\frac{1}{2}gh^2 + \frac{P^2}{h}\end{pmatrix},\quad \bm g(\bm u, z) = \begin{pmatrix}0\\h\end{pmatrix}, \quad z(x) = gb(x).
\end{equation*}

For smooth solutions $\bm u$, the SWEs can also be written in primitive variables $\bm q = (\eta, v)^T$ consisting of the total water height $\eta = h + b$ and the velocity $v$:
\begin{subequations}
	\begin{align}
		\eta_t + ((\eta + D - \eta_0)v)_x &= 0,\label{eq:swe-prim1}\\
		v_t + g\eta_x + vv_x &= 0.\label{eq:swe-prim2}
	\end{align}
\end{subequations}
In the primitive formulation, we usually work with the \emph{still water depth} $D = \eta_0 - b$ for a reference water height $\eta_0$. From \eqref{eq:swe-mass}, we can see that for some spatial domain $\Omega\subset\R$ with periodic boundary conditions, the total mass $\cM$ and total discharge $\cI$ for constant bathymetry are conserved over time, where
\begin{equation*}
	\cM(t; \bm u) = \int_\Omega h\tr dx,\quad \cI(t; \bm u) = \int_\Omega P\tr dx.
\end{equation*}
For reflecting boundary
conditions ($\eta_x = 0$ and $v = 0$ at the boundaries), the total mass is still conserved, but the total
discharge not anymore.

In many cases, balance laws have an additional conserved, or at least dissipated, nonlinear
functional. Following \cite[Chapter 3.1]{bouchut2004nonlinear},
a twice differentiable function $U(\bm u, z)$ is a mathematical \emph{(partial) entropy} for a balance law \eqref{eq:balance-law}
if there is a \emph{(partial) entropy flux} $F(\bm u, z)$, such that
\begin{equation*}
	U_u(\bm u, z)\cdot\bm f^\prime(\bm u) = F_u(\bm u, z) \quad\forall \bm u\in\cU,
\end{equation*}
where the index $u$ denotes the transposed gradient with respect to $\bm u$.
The mathematical entropy pair $(U, F)$ satisfies for smooth solutions $\bm u$ the balance law
\begin{equation}\label{eq:entropy-equality-balance-law}
	U(\bm u, z)_t + F(\bm u, z)_x = -mz_x
\end{equation}
for $m(\bm u, z) = \bm w(\bm u, z)\cdot\bm g(\bm u, z) - F_z(\bm u, z)$. Here, $\bm w(\bm u, z) = \nabla_uU(\bm u, z)$ are the \emph{entropy variables}. For general solutions we postulate an entropy inequality
\begin{equation*}
	U(\bm u, z)_t + F(\bm u, z)_x \le -mz_x.
\end{equation*}
For the SWEs with bathymetry as source term we have the partial entropy
\begin{equation}\label{eq:entropy-swe}
	U(\bm u, z) = \frac{1}{2}(hv^2 + gh^2) + ghb = \frac{1}{2}\left(\frac{P^2}{h} + gh^2\right) + hz
\end{equation}
and the partial entropy flux
\begin{equation}\label{eq:entropy-flux-swe}
	F(\bm u, z) = \frac{1}{2}hv^3 + gh^2v + gbhv = \frac{1}{2}\frac{P^3}{h^2} + ghP + Pz.
\end{equation}
Since $m = 0$, \eqref{eq:entropy-equality-balance-law} yields the conservation of the total entropy
\begin{equation*}
	\cE(t; \bm u, z) = \int_\Omega U(\bm u, z)\tr dx.
\end{equation*}
For the SWEs, the mathematical entropy $U$ is the physical total (i.e., potential plus kinetic) energy. In primitive variables, the energy is usually written as
\begin{equation}\label{eq:entropy-swe-prim}
	U(\bm q, z) = \frac{1}{2}(g\eta^2 + (\eta + D - \eta_0)v^2),
\end{equation}
which differs from the energy \eqref{eq:entropy-swe} in the constant term $\frac{1}{2}gb^2$. This term is neglected since we are usually only interested in the change of energy. In the following, we will drop the explicit dependency of the energy function on the bathymetry and write, e.g., $U(\bm u)$ instead of $U(\bm u, z)$.

The SWEs have non-trivial steady-state solutions, i.e., solutions that are constant in time \cite[p. 67]{bouchut2004nonlinear}. We focus on the \emph{lake-at-rest} solution
\begin{equation*}
	v = 0, \qquad
	(h + b)_x = 0.
\end{equation*}
These special solutions of the SWEs are important since many applications are concerned
with small perturbations thereof. Moreover, very often a realistic simulation
converges to a steady-state for $t\to\infty$ \cite{lukacova2006well}.

\subsection{BBM-BBM equations with variable topography}
The BBM-BBM equations have been studied, e.g., in \cite{bona1998boussinesq}.
As common in the literature \cite{katsaounis2020boussinesq,mitsotakis2021conservative,israwi2023equations},
we let $\eta_0 = 0$ for simplicity. For constant bathymetry, the BBM-BBM equations can be written as
\begin{subequations}
	\begin{align}
		\eta_t + ((\eta + D)v)_x -\frac{1}{6}D^2\eta_{xxt}&= 0,\label{eq:BBMBBM-const1}\\
		v_t + g\eta_x + vv_x -\frac{1}{6}D^2v_{xxt}&= 0.\label{eq:BBMBBM-const2}
	\end{align}
\end{subequations}
Compared to the SWEs in primitive variables \eqref{eq:swe-prim1}--\eqref{eq:swe-prim2}, the BBM-BBM equations differ in the additional dispersive terms containing mixed third derivatives. Well-posedness results for the BBM-BBM equations are obtained in \cite{bona1998boussinesq} and later more generally in \cite{bona2004boussinesq}. An analytical soliton solution with constant speed $c =  \frac{5}{2}\sqrt{gD}$, parameter $\rho = \frac{18}{5}$, and some $x_0\in\R$ has been derived in \cite{chen1998exact} for the dimensionless case and reads for  $\theta = \frac{1}{2}\sqrt{\rho}\frac{\xi - x_0}{D}$, $\xi = x - ct$ as
\begin{subequations}
	\begin{align}
		\eta(t, x) &= \frac{15}{4}D\left(2\sech^2(\theta) - 3\sech^4(\theta)\right),\label{eq:BBMBBM-soliton1}\\
		v(t, x) &= \frac{15}{2}\sqrt{gD}\sech^2(\theta).\label{eq:BBMBBM-soliton2}
	\end{align}
\end{subequations}
Some works extend the classical BBM-BBM equations to a varying bathymetry, e.g., \cite{mitsotakis2009boussinesq,katsaounis2020boussinesq,israwi2021regularized,israwi2023equations}. Here, we focus on the model
\begin{subequations}
	\begin{align}
		\eta_t + ((\eta + D)v)_x -\frac{1}{6}(D^2\eta_{xt})_x&= 0,\label{eq:BBMBBM1}\\
		v_t + g\eta_x + vv_x -\frac{1}{6}(D^2v_{t})_{xx}&= 0\label{eq:BBMBBM2}
	\end{align}
\end{subequations}
analyzed in two-dimensional form in \cite{israwi2021regularized,israwi2023equations}. The equations can also be seen as a simplification of the model from \cite{katsaounis2020boussinesq,antonopoulos2025bona}. They are not only derived from classical asymptotic reasonings as approximations of the Euler equations considering long waves with small amplitude, but are also justified by variational principles \cite{israwi2021regularized}.
Clearly, \eqref{eq:BBMBBM1}--\eqref{eq:BBMBBM2} turn into \eqref{eq:BBMBBM-const1}--\eqref{eq:BBMBBM-const2} if the bottom topography is constant. Therefore, we also refer to \eqref{eq:BBMBBM1}--\eqref{eq:BBMBBM2} as BBM-BBM equations. In \cite{israwi2021regularized} it is shown that the two-dimensional BBM-BBM equations with mildly varying topography are well-posed for slip wall boundary conditions.

Defining the operators $F_1 := (\eta + D)v - \frac{1}{6}D^2\eta_{xt}$ and $F_2 := g\eta + \frac{1}{2}v^2 - \frac{1}{6}(D^2v_t)_x$, the BBM-BBM equations can be written in conservation form:
\begin{equation*}
	\eta_t + (F_1)_x = 0, \qquad
	v_t + (F_2)_x = 0.
\end{equation*}
Thus, the total mass $\cM$ and the total velocity $\cV$
\begin{equation*}
	\cM(t; \eta) = \int_\Omega\eta\tr dx, \quad \cV(t; v) = \int_\Omega v\tr dx
\end{equation*}
are invariants of the BBM-BBM equations with periodic boundary conditions.
In particular, it is not the total momentum (for constant bathymetry) that is conserved, but the total velocity.
Contrary to many other dispersive shallow water models, the BBM-BBM equations preserve the same energy
\begin{equation}\label{eq:primitive-total-entropy}
	\cE(t; \bm q) = \frac{1}{2}\int_\Omega g\eta^2 + (\eta + D)v^2\tr dx
\end{equation}
as the SWEs \cite[p. 757]{israwi2021regularized}. This is a useful feature for numerical approximations of the BBM-BBM equations because energy-conserving discretizations can stabilize the schemes and lead to better approximations.

To study the dispersive behavior of the BBM-BBM equations, we take a look at the linear dispersion relation. Assuming a flat bathymetry, linearizing the equations around $v = 0 + v^\prime$, $\eta = h_0 + h^\prime$,
and omitting nonlinear terms, we arrive at the linearized equations
\begin{equation*}
	h^\prime_t + h_0v^\prime_x - \frac{1}{6}h_0^2h^\prime_{xxt} = 0, \qquad
	v^\prime_t + gh^\prime_x - \frac{1}{6}h_0^2v^\prime_{xxt} = 0.
\end{equation*}
Taking one time derivative of the first and one spatial derivative of the second equation and positing a plane wave solution $h^\prime = \mr e^{\mr i(kx - \omega t)}$, we obtain
\begin{equation*}
	\omega = \pm\frac{\sqrt{gh_0}k}{1 + \frac{1}{6}(h_0k)^2}
\end{equation*}
as angular frequency.
Therefore, the phase velocity $c = \frac{\omega}{k}$ of the BBM-BBM equations normalized by $c_0 = \sqrt{gh_0}$ is given by
\begin{equation*}
	\frac{c_{\text{BBM-BBM}}}{\sqrt{gh_0}} = \frac{1}{1 + \frac{1}{6}(h_0k)^2}.
\end{equation*}
The normalized wave speed of the BBM-BBM equations and the one for the more general Euler equations \cite[Chapter 12]{whitham1974linear}
\begin{equation}\label{eq:disp-rel-Euler}
	\frac{c_{\text{Euler}}}{\sqrt{gh_0}} = \sqrt{\frac{\tanh(h_0k)}{h_0k}}
\end{equation}
are plotted and compared in \Cref{fig:disp-rels} (including the dispersion relation of the model of Svärd and Kalisch discussed in the following section).

\subsection{Dispersive model of Svärd and Kalisch}
Recently, a new dispersive shallow water model was presented in
a preprint \cite{svard2023novel} and published in \cite{svard2025novel}.
The model described in \cite{svard2025novel} is a special case of the
previous model examined in \cite{svard2023novel}, which can be obtained
by choosing the model parameters $\tilde\alpha = \tilde\gamma = 0$ (see below).
Notably, the simplified model is isotropic, while the more general one is not.
However, as the general model is mathematically a strict generalization of the
simpler model, it also satisfies an entropy condition, and has a more flexible
dispersive behavior, we will consider that one in this article.


For constant $b$, Svärd and Kalisch \cite{svard2023novel} propose the model
\begin{subequations}
	\begin{align}
		h_t + (hv)_x &= \alpha(h + b)_{xxx},\label{eq:svaerd-kalisch-constant1}\\
		(hv)_t + (hv^2)_x + gh(h + b)_x &= \alpha(v(h + b)_{xx})_x + \beta v_{xxt} + \gamma v_{xxx}\label{eq:svaerd-kalisch-constant2},
	\end{align}
\end{subequations}
adding third-order derivative terms to the classical SWEs \eqref{eq:swe-mass}--\eqref{eq:swe-momentum} weighted by coefficients $\alpha, \beta, \gamma$. Dedimensionalizing the equations gives the dimensionless coefficients $\tilde\alpha, \tilde\beta, \tilde\gamma$ satisfying
\begin{equation}\label{eq:SK-coefficients}
	\alpha = \tilde\alpha\sqrt{gh_0}h_0^2,\quad\beta = \tilde\beta h_0^3,\quad\gamma = \tilde\gamma\sqrt{gh_0}h_0^3,
\end{equation}
for a reference height $h_0$. The dispersion relation yields a quadratic equation in the frequency $\omega$ as described in \cite{svard2023novel}.
Svärd and Kalisch propose to optimize $\tilde\alpha, \tilde\beta, \tilde\gamma$, such that the resulting dispersion relation $\omega(k)$ follows the one of the Euler equations. The optimization, as performed in \cite{svard2023novel}, can be done by matching the first few terms of the Taylor series for the dispersion relation of the Euler equations resulting in \emph{set~1} \cite{svard2023novel}
\begin{equation*}
	\tilde\alpha = -\frac{1}{3}, \quad \tilde\beta = \tilde\gamma = 0.
\end{equation*}
Another possibility is to minimize some norm of the relative error
\begin{equation*}
	\varepsilon(k) = \frac{|\omega(k) - \omega_{\text{Euler}}(k)|}{\omega_{\text{Euler}}(k)}
\end{equation*}
for some range of $k$. Taking $k\in(0, 2\pi)$ gives \emph{set~2} \cite{svard2023novel}
\begin{equation*}
\begin{gathered}
	\tilde\alpha = 0.0004040404040404049,
	\quad
	\tilde\beta = 0.49292929292929294,
	\\
	\tilde\gamma = 0.15707070707070708
\end{gathered}
\end{equation*}
and taking $k\in(0, 8\pi)$ results in the following \emph{set~3} \cite{svard2023novel}
\begin{equation*}
	\tilde\alpha = 0, \quad\tilde\beta = 0.27946992481203003, \quad\tilde\gamma = 0.0521077694235589.
\end{equation*}
Instead of minimizing the relative error, Svärd and Kalisch also minimized the absolute error $|\omega(k) - \omega_{\text{Euler}}(k)|$, which leads for $k\in(0, 2\pi)$ to \emph{set~4} \cite{svard2023novel}
\begin{equation*}
	\tilde\alpha = 0, \quad\tilde\beta = 0.2308939393939394, \quad\tilde\gamma = 0.04034343434343434.
\end{equation*}
The simplified model in \cite{svard2025novel} corresponds to \emph{set~5}
\begin{equation*}
	\tilde\alpha = 0, \quad\tilde\beta = 1/3, \quad\tilde\gamma = 0.
\end{equation*}
The resulting dispersion relations are plotted in \Cref{fig:disp-rels}.

\begin{figure}[htbp]
	\centering
	\includegraphics[width=0.9\textwidth]{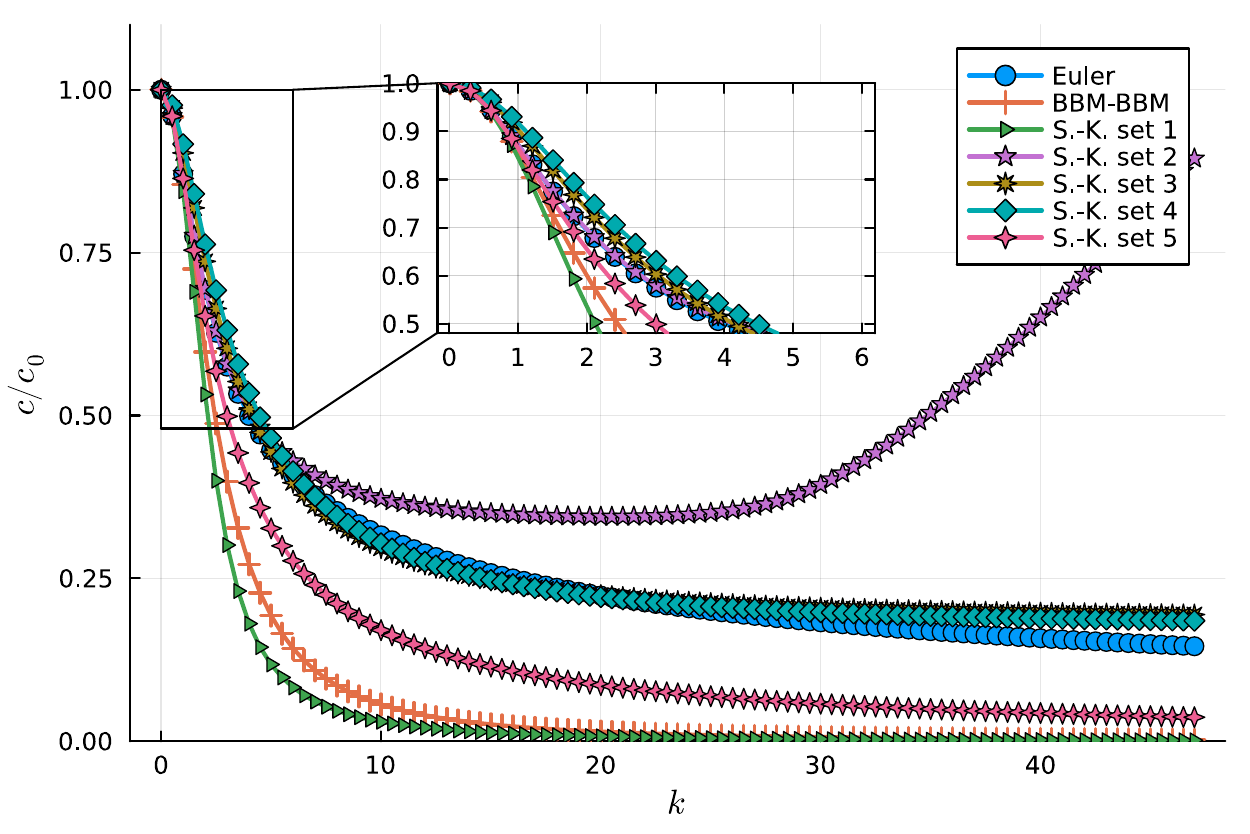}
	\caption{Dispersion relations of different models for $k\in(0, 15\pi)$}
	\label{fig:disp-rels}
\end{figure}

All of the models approximate the dispersion relation of the Euler equations \eqref{eq:disp-rel-Euler} reasonably well for high wavelengths (thus small wavenumbers) as can be seen in the zoomed-in plot. For higher wavenumbers the optimized coefficients for the Svärd-Kalisch model, i.e., sets 2, 3, and 4, are closer to the optimal dispersion relation. While the wavespeed diverges for set~2 if $k\to\infty$, sets 3 and 4 closely follow the dispersion relation of the Euler equations over a long range of wavenumbers and seem to converge to a value $> 0$.
An advantage of the model of Svärd and Kalisch lies in its flexibility and in the fact that the dispersion relation can be adjusted by tuning the parameters $\tilde\alpha, \tilde\beta, \tilde\gamma$ for the relevant range of wavelengths.

For the generalization of \eqref{eq:svaerd-kalisch-constant1}--\eqref{eq:svaerd-kalisch-constant2} to a general bottom topography,
Svärd and Kalisch demand that the dispersive model approaches the SWEs with smaller water height since in that case dispersion becomes less relevant. Hence, one possibility is to exchange the constant coefficients $\alpha$, $\beta$, $\gamma$ by space-dependent coefficients $\hat\alpha(x)$, $\hat\beta(x)$ and $\hat\gamma(x)$ that are small for small water depths.
A straight-forward modification of \eqref{eq:svaerd-kalisch-constant1}--\eqref{eq:svaerd-kalisch-constant2} then reads as
\begin{subequations}
	\begin{align}
		h_t + (hv)_x &= (\hat\alpha(\hat\alpha(h + b)_x)_x)_x,\label{eq:svaerd-kalisch1}\\
		(hv)_t + (hv^2)_x + gh(h + b)_x &= (\hat\alpha v(\hat\alpha(h + b)_x)_x)_x + (\hat\beta v_x)_{xt} + \frac{(\hat\gamma v_x)_{xx} + (\hat\gamma v_{xx})_x}{2}.\label{eq:svaerd-kalisch2}
	\end{align}
\end{subequations}
As a natural definition, the nondimensional form \eqref{eq:SK-coefficients} is generalized to
\begin{equation}\label{eq:SK-coefficients2}
	\hat\alpha^2(x) = \tilde\alpha\sqrt{gD}D^2, \quad\hat\beta(x) = \tilde\beta D^3, \quad \hat\gamma(x) = \tilde\gamma\sqrt{gD}D^3.
\end{equation}
for the still water depth $D$. With these definitions we have $\hat\alpha$, $\hat\beta$,
$\hat\gamma\to 0$ if $D\to 0$, i.e., for small water height the equations turn into the
SWEs. This property makes it unnecessary to switch between the dispersive model and the
SWEs near the shore, as is, e.g., done in \cite{berger2021towards}.
By the definitions \eqref{eq:SK-coefficients2} we get the additional constraint
$\tilde\alpha\ge 0$. This condition does not hold for set~1, but set~1 was outperformed by
the sets 2, 3, and 4 anyway.

The entropy analysis of \eqref{eq:svaerd-kalisch1}--\eqref{eq:svaerd-kalisch2}
presented in \cite{svard2023novel} shows that $U$ \eqref{eq:entropy-swe}
is a partial entropy with corresponding partial entropy flux $F$ \eqref{eq:entropy-flux-swe}.
In contrast to the SWEs the source term $-mz_x$ in \eqref{eq:entropy-equality-balance-law} does not vanish, but consists of contributions $M_\alpha$, $M_\beta$, $M_\gamma$ from the dispersive terms.
\begin{theorem}[Svärd and Kalisch (2023)]\label{th:entropy-cons-SK}
	Any smooth solution $\bm u$ of the equations \eqref{eq:svaerd-kalisch1}--\eqref{eq:svaerd-kalisch2} with periodic boundary conditions satisfies
	\begin{equation}\label{eq:entropy-balance-law-SK}
		U(\bm u)_t + F(\bm u)_x = M_\alpha + M_\beta + M_\gamma,
	\end{equation}
	where the entropy pair $(U, F)$ is the same as for the SWEs \eqref{eq:entropy-swe}--\eqref{eq:entropy-flux-swe}, and the dispersive contributions are given by
	\begin{align*}
		M_\alpha &= g((h + b)\hat\alpha(\hat\alpha(h + b)_x)_x)_x - \frac{1}{2}g(\hat\alpha (h + b)^2_x)_x + \frac{1}{2}(v^2\hat\alpha(\hat\alpha(h + b)_x)_x)_x,\\
		M_\beta &= (\hat\beta vv_{xt})_x - \frac{1}{2}(\hat\beta v_x^2)_t,\\
		M_\gamma &= \frac{1}{2}\left((v(\hat\gamma v_x)_x)_x + (v\hat\gamma v_{xx})_x - (\hat\gamma v_x^2)_x\right).
	\end{align*}
\end{theorem}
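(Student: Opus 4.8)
The plan is to contract the system \eqref{eq:svaerd-kalisch1}--\eqref{eq:svaerd-kalisch2} with the SWE entropy variables. With $U$ as in \eqref{eq:entropy-swe} one has $\bm w = \nabla_{\bm u}U = \bigl(g(h+b)-\tfrac12 v^2,\;v\bigr)^{T}$, so I would multiply the mass equation by $w_1 = g\eta - \tfrac12 v^2$ (writing $\eta = h+b$) and the momentum equation by $w_2 = v$, and add. The left-hand sides of \eqref{eq:svaerd-kalisch1}--\eqref{eq:svaerd-kalisch2} are precisely the balance-law operator of the SWEs \eqref{eq:balance-law} written in the variables $(h, hv)$; indeed $gh(h+b)_x = \bigl(\tfrac12 gh^2\bigr)_x + ghb_x$ recovers the SWE momentum flux together with the bathymetry source. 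Hence this contraction turns the hyperbolic part into $U_t + F_x$ by the entropy--flux compatibility of the SWEs, for which the source $m$ in \eqref{eq:entropy-equality-balance-law} vanishes. It then remains to show that $\bm w$ contracted with the dispersive right-hand sides equals $M_\alpha + M_\beta + M_\gamma$. Since the contraction is linear and the $\alpha$-, $\beta$- and $\gamma$-contributions decouple, I would treat the three groups separately, using throughout that $\hat\alpha,\hat\beta,\hat\gamma$ depend on $x$ only (via $D(x)=\eta_0-b(x)$ as in \eqref{eq:SK-coefficients2}), so $\partial_t$ commutes with multiplication by them.

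For the $\beta$-group the contribution is $w_2\,(\hat\beta v_x)_{xt} = v\,(\hat\beta v_{xt})_x$; one product-rule step (\enquote{integration by parts without the integral}) yields $(\hat\beta v v_{xt})_x - \hat\beta v_x v_{xt}$, and since $\hat\beta v_x v_{xt} = \tfrac12(\hat\beta v_x^2)_t$ this equals $M_\beta$. For the $\gamma$-group the contribution is $\tfrac{v}{2}\bigl((\hat\gamma v_x)_{xx} + (\hat\gamma v_{xx})_x\bigr)$; peeling one $x$-derivative off each term gives $\tfrac12\bigl(v(\hat\gamma v_x)_x\bigr)_x + \tfrac12\bigl(v\hat\gamma v_{xx}\bigr)_x$ minus a leftover $\tfrac12\bigl(v_x(\hat\gamma v_x)_x + v_x\hat\gamma v_{xx}\bigr)$, and a short computation identifies the leftover with $\tfrac12(\hat\gamma v_x^2)_x$, so the sum is $M_\gamma$.

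The $\alpha$-group is the delicate one, because it receives contributions from both equations. Writing $\psi := \hat\alpha\bigl(\hat\alpha(h+b)_x\bigr)_x$, the contraction is $\bigl(g\eta - \tfrac12 v^2\bigr)\psi_x + v\,(v\psi)_x$; expanding $v(v\psi)_x = vv_x\psi + v^2\psi_x$ and collecting, the velocity terms assemble into the perfect derivative $\tfrac12(v^2\psi)_x$, leaving $g\eta\psi_x + \tfrac12(v^2\psi)_x$. A further product-rule step $g\eta\psi_x = g(\eta\psi)_x - g\eta_x\psi$, together with the identity $\eta_x\psi = (\hat\alpha\eta_x)(\hat\alpha\eta_x)_x = \tfrac12\,\partial_x\bigl((\hat\alpha(h+b)_x)^2\bigr)$ — which is a mere rearrangement of scalar products and needs no division, hence is valid even where $\hat\alpha$ vanishes — yields the divergence terms constituting $M_\alpha$. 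Adding the three groups to $U_t + F_x$ gives \eqref{eq:entropy-balance-law-SK}. I expect the main obstacle to be exactly the bookkeeping in the $\alpha$-group: one must correctly merge the two equations' contributions and differentiate the nested operator $\hat\alpha(\hat\alpha(\cdot)_x)_x$ without losing any $\hat\alpha_x$ terms; the $\beta$- and $\gamma$-groups are routine by comparison. Note that periodic boundary conditions are irrelevant for the pointwise identity \eqref{eq:entropy-balance-law-SK} itself; they would only be used to integrate it over the domain and cancel the divergence terms.
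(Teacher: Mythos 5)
Your derivation is correct and complete: contracting with the SWE entropy variables $\bm w = (g(h+b)-\tfrac12 v^2,\,v)^T$ turns the hyperbolic part into $U_t+F_x$ (with $m=0$, exactly as for the SWEs), and your product-rule manipulations of the $\alpha$-, $\beta$-, and $\gamma$-groups each check out, including the key identities $\hat\beta v_xv_{xt}=\tfrac12(\hat\beta v_x^2)_t$, $v_x(\hat\gamma v_x)_x+\hat\gamma v_xv_{xx}=(\hat\gamma v_x^2)_x$, and $\eta_x\psi=\tfrac12\bigl((\hat\alpha\eta_x)^2\bigr)_x$. Note that the paper itself does not prove this theorem — it only cites Svärd and Kalisch (2023, pp.~12--13) — so your write-up is a genuine addition; the entropy-variable contraction you use is the standard route and presumably the one taken in that reference. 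One small point worth flagging: your $\alpha$-computation produces $-\tfrac12 g\bigl((\hat\alpha(h+b)_x)^2\bigr)_x$, i.e.\ the coefficient enters squared, whereas the theorem as printed reads $-\tfrac12 g\bigl(\hat\alpha(h+b)_x^2\bigr)_x$; your version is the algebraically correct one (it reduces to the constant-coefficient term $-\tfrac12 g\alpha(\eta_x^2)_x$ with $\alpha=\hat\alpha^2$), so the printed formula should be read as $(\hat\alpha(h+b)_x)^2$ — a notational slip in the statement rather than a gap in your argument. Your closing remark that periodicity plays no role in the pointwise identity is also accurate.
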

\begin{proof}
    See \cite[pp. 12--13]{svard2023novel}.
\end{proof}
The relation \eqref{eq:entropy-balance-law-SK} can be interpreted in a different way. Since all of the terms in $M_\alpha$, $M_\beta$ and $M_\gamma$ contain either an outer spatial derivative or an outer time derivative, we can see that the model of Svärd and Kalisch admits the mathematical entropy function
\begin{equation}\label{eq:mod-entropy}
	\hat U(\bm u) = U(\bm u) + \frac{1}{2}\hat\beta v_x^2,
\end{equation}
which we call \emph{modified entropy}. The corresponding \emph{modified entropy flux}
\begin{equation*}
	\hat F(\bm u) = F(\bm u) - F_\alpha(\bm u) - F_\beta(\bm u) - F_\gamma(\bm u)
\end{equation*}
consists of the entropy flux $F$ for the SWEs and additional dispersive contributions to the modified entropy flux that are given by
\begin{align*}
	F_\alpha(\bm u) &= g(h + b)(\hat\alpha(\hat\alpha(h + b))_x)_x - \frac{1}{2}g\hat\alpha (h + b)^2_x + \frac{1}{2}v^2\hat\alpha(\hat\alpha(h + b)_x)_x,\\
	F_\beta(\bm u) &= \hat\beta vv_{xt},\\
	F_\gamma(\bm u) &= \frac{1}{2}\left(v(\hat\gamma v_x)_x + v\hat\gamma v_{xx} - \hat\gamma v_x^2\right).
\end{align*}
We note that the modified entropy and entropy flux include derivatives and therefore take the form of operators. Due to \Cref{th:entropy-cons-SK}, the entropy conservation of the modified entropy can be written as
\begin{equation*}
	\hat U(\bm u)_t + \hat F(\bm u)_x = 0.
\end{equation*}
As an immediate consequence, the \emph{total modified entropy}
\begin{equation}\label{eq:total-mod-entropy}
	\hat\cE(t; \bm u) = \int_\Omega\hat U(\bm u)\tr dx
\end{equation}
is conserved for smooth solutions $\bm u$ and periodic of reflecting (i.e., $v = 0$ on the boundary)
boundary conditions.
This entropy bound can be seen in analogy to an entropy inequality and can be used as a stability criterion, especially for numerical methods.

Although a linear stability analysis has been performed in \cite{svard2023novel}, there is no general well-posedness result for the equations \eqref{eq:svaerd-kalisch1}--\eqref{eq:svaerd-kalisch2}.

\section{Structure-preserving discretizations}\label{sect:numerical-methods}

In this section, we first review the concept of SBP operators.
Afterwards, we develop new structure-preserving methods for the BBM-BBM
equations and the model of Svärd and Kalisch.

\subsection{Review of summation-by-parts operators}
In recent years, SBP operators have gained particular interest as they allow to transfer analytical results to numerical methods. This is done by mimicking integration by parts discretely, which is one of the key ingredients at the continuous level.

SBP operators were first developed for finite difference methods
\cite{kreiss1974finite,strand1994summation,mattsson2004summation}
to mimic stability proofs based on integration by parts as used in
finite element methods. However, exact integration can be impossible or
expensive in finite element methods. In this case, SBP formulations can be
advantageous since they include a quadrature rule.
In particular, split forms \cite{fisher2013discretely} can be used with
SBP operators to avoid the need for exact integration and the lack of
discrete chain and product rules \cite{ranocha2019mimetic}.
Several classes of numerical methods can be formulated via SBP operators,
including finite volume methods \cite{nordstrom2001finite,nordstrom2003finite},
continuous Galerkin methods \cite{hicken2016multidimensional,hicken2020entropy,abgrall2020analysisI},
discontinuous Galerkin (DG) methods \cite{gassner2013skew,carpenter2014entropy,chan2018discretely},
and flux reconstruction methods \cite{huynh2007flux,ranocha2016summation}.
Further references are given in the review articles
\cite{svard2014review,fernandez2014review,chen2020review}.

In this work, we focus on the one-dimensional case. However, extensions to two or three spatial dimensions can be obtained using either
tensor product constructions or multidimensional SBP operators
\cite{fernandez2014review,hicken2016multidimensional,fernandez2018simultaneous,hicken2025constructing}.

The definitions and results presented in this chapter are mainly based on the formulations of \cite{ranocha2021broad}.
Multiplication of discrete functions $\bm u$ and $\bm v$ is performed component-wise $(\bm u\bm v)_j = u_jv_j$ and similarly $(\bm u^k)_j = u_j^k$. We also define $\bm 1 = (1, \ldots, 1)^T$, $\bm 0 = (0, \ldots, 0)^T$, $\bm e_L = (1, 0, \ldots, 0)^T$, and $\bm e_R = (0, \ldots, 0, 1)^T$.

SBP operators describe $i$-th derivatives $\frac{\tr d^i}{\tr dx^i}$, $i\in\mathbb{N}$, by a \emph{derivative matrix} $D_i$ and integration by a \emph{mass matrix} (or \emph{norm matrix}) $M$.
\begin{definition}
    Given a grid $\bm x$, a \emph{$p$-th order accurate $i$-th derivative matrix}, $D_i$ \cite[Definition 2.1]{ranocha2021broad} is a matrix that satisfies for all $k = 0, \ldots, p$
    \begin{equation}\label{eq:order}
        D_i\bm x^k = k(k - 1)\ldots(k - i + 1)\bm x^{k - i}.
    \end{equation}
    The derivative matrix $D_i$ is called \emph{consistent} if $p\ge 0$,
	i.e., $D_i\bm 1 = \bm 0$.
\end{definition}
Note that in the case $k < i$ in \eqref{eq:order} one of the previous factors is zero and we interpret the right-hand side as zero to avoid any potential issues with negative exponents.
\begin{definition}\label{def:first-derivative-SBP}
    A \emph{first-derivative summation-by-parts (SBP) operator} \cite[Definition 2.2]{ranocha2021broad} consists of a grid $\bm x$, a consistent first-derivative matrix $D_1$ and a symmetric and positive-definite matrix $M$, such that
    \begin{equation}\label{eq:1-sbp}
        MD_1 + D_1^TM = \bm e_R\bm e_R^T - \bm e_L\bm e_L^T.
    \end{equation}
    A \emph{periodic first-derivative SBP operator} \cite[Definition 2.3]{ranocha2021broad} consists of a grid $\bm x$, a consistent first-derivative matrix $D_1$ and a symmetric and positive-definite matrix $M$, such that
    \begin{equation}\label{eq:1-per-sbp}
        MD_1 + D_1^TM = 0.
    \end{equation}
\end{definition}
Sometimes we also refer to the derivative matrix $D_i$ as an SBP operator.
Introducing the notation $\langle\bm u, \bm v\rangle_M = \bm u^TM\bm v$ for the inner product that is induced by $M$, we can see that a periodic first-derivative SBP operator satisfies
\begin{equation*}
    \langle D_1\bm u, \bm v\rangle_M = -\langle\bm u, D_1\bm v\rangle_M.
\end{equation*}
The symmetry of $M$ yields $\langle\bm u, \bm v\rangle_M = \langle\bm v, \bm u\rangle_M$ and from the positive-definiteness it follows $\langle\bm u, \bm u\rangle_M \ge 0$ and $\langle\bm u, \bm u\rangle_M = 0 \Leftrightarrow\bm u = \bm 0$.

\begin{example}\label{ex:first-sbp-operators}
    Consider a uniform grid $\bm x$ with $\Delta x = x_{i + 1} - x_i = \frac{x_{\text{max}} - x_{\text{min}}}{N - 1}$, $x_i = x_{\text{min}} + (i - 1)\Delta x$. Then the classical central finite difference operator with special treatment of the boundaries
    \begin{equation*}
        D_1 = \frac{1}{\Delta x}\begin{pmatrix}
            -1 & 1 & & & \\
            -\frac{1}{2} & 0 & \frac{1}{2} & & \\
            & \ddots & \ddots & \ddots & \\
            & & -\frac{1}{2} & 0 & \frac{1}{2}\\
            & & & -1 & 1
        \end{pmatrix},\quad
        M = \Delta x\begin{pmatrix}
            \frac{1}{2} & & & & \\
            & 1 & & & \\
            & & \ddots & & \\
            & & & 1 & \\
            & & & & \frac{1}{2}
        \end{pmatrix}
    \end{equation*}
    is a second-order accurate first-derivative SBP operator. Similarly, a second-order accurate periodic first-derivative SBP operator is given by $M = \Delta x I$ and
    \begin{equation*}
        D_1 = \frac{1}{2\Delta x}\begin{pmatrix}
            0 & 1 & 0 & \ldots & 0 & -1\\
            -1 & 0 & 1 & \ddots & 0 & 0\\
            & \ddots & \ddots & \ddots & & \\
            & & \ddots & \ddots & \ddots & & \\
            0 & 0 & & -1 & 0 & 1\\
            1 & 0 & \ldots & 0 & -1 & 0
        \end{pmatrix}.
    \end{equation*}
\end{example}

In \cite{mattsson2017diagonal} so-called upwind operators are introduced that are commonly used to allow entropy-dissipative numerical methods as we will see later.
\begin{definition}
    A \emph{first-derivative upwind SBP operator} \cite[Definition 2.4]{ranocha2021broad} consists of a grid $\bm x$, consistent first-derivative matrices $D_{1,\pm}$ and a symmetric and positive-definite matrix $M$, such that
    \begin{equation*}
        MD_{1,+} + D_{1,-}^TM = \bm e_R^T\bm e_R - \bm e_L^T\bm e_L
    \end{equation*}
    and $\frac{1}{2}M(D_{1,+} - D_{1,-})$ being negative semidefinite.\\
    A \emph{periodic first-derivative upwind SBP operator} \cite[Definition 2.5]{ranocha2021broad} consists of a grid $\bm x$, consistent first-derivative matrices $D_{1,\pm}$ and a symmetric and positive-definite matrix $M$, such that
    \begin{equation*}
        MD_{1,+} + D_{1,-}^TM = 0
    \end{equation*}
    and $\frac{1}{2}M(D_{1,+} - D_{1,-})$ being negative semidefinite.
\end{definition}
Closely related to upwind operators are so-called \emph{dual-pair} (DP) SBP operators, which were first introduced in \cite{dovgilovich2015high}.
In terms of inner products, periodic upwind operators satisfy
\begin{equation*}
    \langle D_{1,+}\bm u, \bm v\rangle_M = -\langle\bm u, D_{1,-}\bm v\rangle_M.
\end{equation*}
A (central) first-derivative SBP operator can be constructed from first-derivative upwind SBP operators by defining $D_1 = \frac{1}{2}(D_{1,+} + D_{1,-})$.
\begin{definition}\label{def:second-derivative-SBP}
    A \emph{second-derivative SBP operator} \cite[Definition 2.6]{ranocha2021broad} consists of a grid $\bm x$, a consistent second-derivative matrix $D_2$, a symmetric and positive-definite matrix $M$, and derivative vectors $\bm d_{L/R}$ approximating the first derivative at the left/right endpoint as $\bm d_{L/R}^T\bm u \approx u^\prime(x_{\text{min}/\text{max}})$, such that
    \begin{equation*}
        MD_2 = D_2^TM + \bm e_R\bm d_R^T - \bm e_L\bm d_L^T - \bm d_R\bm e_R^T + \bm d_L\bm e_L^T.
    \end{equation*}
    A \emph{periodic second-derivative SBP operator} \cite[Definition 2.7]{ranocha2021broad} consists of a grid $\bm x$, a consistent second-derivative matrix $D_2$, and a symmetric and positive-definite matrix $M$, such that
    \begin{equation}\label{eq:2-per-sbp}
        MD_2 = D_2^TM.
    \end{equation}
\end{definition}
For periodic second-derivative SBP operators we have $\langle D_2\bm u, \bm v\rangle_M = \langle\bm u, D_2\bm v\rangle_M$.
SBP operators of higher derivative order can be defined similarly, see e.g. \cite{mattsson2014diagonal}.

\begin{remark}\label{rem:SBP_grids}
    SBP operators can be interpreted as classical finite difference operators, where the mass matrix of the periodic
    operators on a regular grid is given by $M = \Delta xI$. SBP operators with boundary-adapted grids are developed
    in \cite{mattsson2018boundary}. Spectral finite difference methods \cite{carpenter1996spectral} can be obtained by
    using a Lobatto-Legendre basis for polynomials of degree $p$. Here, the grid is non-uniform and given by the
    Lobatto-Legendre quadrature nodes and the mass matrix is given as the diagonal matrix $M = \diag(\omega_1,\ldots,\omega_p)$,
    where $\omega_i$ are the Lobatto-Legendre quadrature weights. Similarly, the discontinuous Galerkin spectral element
    method (DGSEM) \cite{gassner2013skew} is formulated by coupling local SBP operators based on Lobatto-Legendre
    quadrature \cite{ranocha2021broad}.
\end{remark}

In the following, we will use SBP operators with diagonal norm matrix $M$.
In this case, we have $\langle\bm u\bm v,\bm w\rangle_M = \langle\bm u,\bm v\bm w\rangle_M$.
Concretely, we will use finite difference, spectral difference
\cite{carpenter1996spectral}, discontinuous Galerkin spectral element
methods (DGSEM) \cite{gassner2013skew}, and the corresponding continuous
Galerkin methods as described in \cite{ranocha2021broad} and references
cited therein.

We collect some properties of SBP operators in the following lemma.
\begin{lemma}\label{le:per-SBP-prop}
    Periodic first- and second-derivative SBP operators $D_i$, $i = 1, 2$ and periodic upwind SBP operators $D_{1,+}$, $D_{1,-}$ satisfy
    \begin{enumerate}
        \item $\bm 1^TMD_i = \bm 0^T$ and $\bm 1^TMD_{1, \pm} = \bm 0^T$, \cite[Lemma 2.1]{ranocha2021broad}
        \item $\langle D_1\bm u, \bm u\rangle_M = 0$,
        \item $\langle D_{1,-}\bm u, \bm u\rangle_M\ge 0$ and $\langle D_{1,+}\bm u,\bm u\rangle_M\le 0$.
    \end{enumerate}
\end{lemma}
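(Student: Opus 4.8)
The plan is to derive all three items directly from the defining algebraic identities of the periodic operators, using only that $M$ is symmetric positive definite and that $D_1$ and $D_2$ are consistent ($D_1\bm 1 = D_2\bm 1 = \bm 0$, and likewise $D_{1,\pm}\bm 1 = \bm 0$).

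For item~1, I would start from the periodic SBP relations in the form $MD_1 = -D_1^T M$, $MD_2 = D_2^T M$, and $MD_{1,+} = -D_{1,-}^T M$. A single transpose together with the symmetry of $M$ gives, e.g., $\bm 1^T M D_1 = -\bm 1^T D_1^T M = -(D_1\bm 1)^T M = \bm 0^T$ by consistency; the same one-line computation handles $D_2$ via \eqref{eq:2-per-sbp}, and handles $D_{1,+}$ via the upwind relation, while $D_{1,-}$ is treated after transposing that relation once more to get $M D_{1,-} = -D_{1,+}^T M$.

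For item~2, I would use that $\langle D_1\bm u,\bm u\rangle_M$ is a scalar and hence equals its symmetric part, $\langle D_1\bm u,\bm u\rangle_M = \tfrac12\bm u^T(D_1^T M + M D_1)\bm u$; by \eqref{eq:1-per-sbp} the bracket is zero. For item~3 the same symmetrization gives $\langle D_{1,-}\bm u,\bm u\rangle_M = \tfrac12\bm u^T(D_{1,-}^T M + M D_{1,-})\bm u$, and the key manipulation is to add and subtract $MD_{1,+}$: using $MD_{1,+} + D_{1,-}^T M = 0$ one finds $D_{1,-}^T M + M D_{1,-} = -M(D_{1,+} - D_{1,-})$, so the quadratic form is $\ge 0$ because $\tfrac12 M(D_{1,+} - D_{1,-})$ is negative semidefinite by definition. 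Symmetrically, $D_{1,+}^T M + M D_{1,+} = M(D_{1,+} - D_{1,-})$, which gives $\langle D_{1,+}\bm u,\bm u\rangle_M \le 0$.

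None of these steps is deep; I expect the only thing requiring care is the transpose bookkeeping for the upwind pair, where both $MD_{1,+} + D_{1,-}^T M = 0$ and its transpose $MD_{1,-} + D_{1,+}^T M = 0$ must be invoked, and the consistent use of $M = M^T$ to identify quadratic forms with their symmetric parts.
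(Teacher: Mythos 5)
Your proposal is correct and is essentially the paper's argument spelled out in full: the paper cites \cite[Lemma 2.1]{ranocha2021broad} for item~1 (your transpose-plus-consistency computation is exactly that proof) and disposes of items~2 and~3 by appealing to the symmetry of $M$ and the negative semidefiniteness of $\tfrac12 M(D_{1,+}-D_{1,-})$, which is precisely the symmetrization of the quadratic form you carry out. No gaps; your careful handling of the transposed upwind relation $MD_{1,-}+D_{1,+}^TM=0$ is the only bookkeeping the paper leaves implicit.
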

\begin{proof}
	See \cite[Lemma 2.1]{ranocha2021broad} for 1. Items 2. and 3. follow from the symmetry of $M$ and the negative-definiteness of $\frac{1}{2}M(D_{1,+} - D_{1,-})$.
\end{proof}

\subsection{Semidiscretization of the BBM-BBM equations with variable topography}\label{sect:num-BBM-BBM}
An energy-conservative discretization of the BBM-BBM equations with constant bathymetry has been studied in \cite{ranocha2021broad,ranocha2021rate}. The semidiscretization for periodic boundary conditions can be written as
\begin{subequations}
\begin{align}
    \bm\eta_t &= -\left(I - \frac{1}{6}D^2D_2\right)^{-1}D_1(D\bm v + \bm\eta\bm v),\label{eq:semi-BBM-BBM-const1}\\
    \bm v_t &= -\left(I - \frac{1}{6}D^2D_2\right)^{-1}D_1\left(g\bm\eta + \frac{1}{2}\bm v^2\right).\label{eq:semi-BBM-BBM-const2}
\end{align}
\end{subequations}
Further discretizations of the BBM-BBM equations with constant bottom topography are available, e.g., \cite{antonopoulos2010numerical,mitsotakis2021conservative}.
In this contribution, we generalize \eqref{eq:semi-BBM-BBM-const1}--\eqref{eq:semi-BBM-BBM-const2} to the case of a variable topography, while ensuring energy conservation.

In \cite{israwi2021regularized}, the authors use a finite element discretization of \eqref{eq:BBMBBM1}--\eqref{eq:BBMBBM2} that provably converges to the exact solution, but the proposed method does not conserve the energy exactly. In the following, we put forward an energy-conserving method based on SBP operators. To do so, we define the diagonal matrix $K = \diag(\bm D^2)$ for $\bm D = (D(x_1), \ldots, D(x_N))^T$. Then, the semidiscretization is formulated as
\begin{subequations}
\begin{align}
    \bm\eta_t + \left(I - \frac{1}{6}D_1 KD_1\right)^{-1}D_1(\bm D\bm v + \bm\eta\bm v) &= \bm 0,\label{eq:semi-BBM-BBM1}\\
    \bm v_t + \left(I - \frac{1}{6}D_2K\right)^{-1}D_1\left(g\bm\eta + \frac{1}{2}\bm v^2\right) &= \bm 0,\label{eq:semi-BBM-BBM2}
\end{align}
\end{subequations}
where $D_1$ and $D_2$ are periodic first-derivative and second-derivative SBP operators, respectively.
Particularly, for the case of $D(x) \equiv D_\text{const}$ being constant (resulting in $K = D_\text{const}^2I$), the semidiscretization
coincides with \eqref{eq:semi-BBM-BBM-const1}--\eqref{eq:semi-BBM-BBM-const2} taking $D_2 = D_1^2$
in the first equation. Due to the more complex structure of the elliptic operators compared to the
case of a constant bathymetry, an analogous version of the proof in \cite{ranocha2021broad}
(specifically Lemma 2.2 and Lemma 2.3) does not hold anymore. Thus, we turn back to mimicking the
continuous form of the energy conservation proof on a discrete level. This requires to take
$D_2 = D_1^2$. Under this condition, the semidiscretization \eqref{eq:semi-BBM-BBM1}, can be rewritten as
\begin{subequations}
\begin{align}
    \bm\eta_t + D_1\underbrace{\left(\bm D\bm v + \bm\eta\bm v - \frac{1}{6}KD_1\bm\eta_t\right)}_{=:\bm{F_1}} &= \bm 0,\label{eq:semi-BBM-BBM-cons1}\\
    \bm v_t + D_1\underbrace{\left(g\bm\eta + \frac{1}{2}\bm v^2 - \frac{1}{6}D_1K\bm v_t\right)}_{=:\bm{F_2}} &= \bm 0.\label{eq:semi-BBM-BBM-cons2}
\end{align}
\end{subequations}
This form mimics the conservation form of the BBM-BBM equations and can be utilized to prove the energy conservation of the numerical scheme. In the context of SBP operators, the discrete total energy is given by
\begin{equation}\label{eq:discrete-total-entropy}
	\cE(t; \bm\eta, \bm v) = \bm 1^TM\bm U,
	\qquad
	\bm U(\bm\eta, \bm v) = \frac{1}{2}g\bm\eta^2 + \frac{1}{2}(\bm\eta + \bm D)\bm v^2,
\end{equation}
where $\bm U$ is the discrete energy, cf.\ \eqref{eq:entropy-swe-prim}.
The discrete total mass and velocity are
\begin{equation}
\label{eq:discrete-total-mass-and-velocity}
	\cM(t; \bm\eta) = \bm 1^TM\bm\eta,
	\qquad
	\cV(t; \bm v) = \bm 1^TM\bm v.
\end{equation}
\begin{theorem}\label{th:semi-props-BBMBBM}
    If $D_1$ is a periodic first-derivative SBP operator and $D_2$ is a periodic second-derivative SBP operator, then the semidiscretization \eqref{eq:semi-BBM-BBM1}--\eqref{eq:semi-BBM-BBM2} conserves the linear invariants $\cM$ and $\cV$ \eqref{eq:discrete-total-mass-and-velocity}. If $D_2 = D_1^2$, the quadratic invariant $\cE$ \eqref{eq:discrete-total-entropy} is also conserved. Furthermore, the scheme is well-balanced, i.e., exactly preserves the lake-at-rest solution.
\end{theorem}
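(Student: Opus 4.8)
I would establish the three claims one at a time, transcribing the relevant continuous computation into the SBP calculus. For the \emph{linear invariants}: equation \eqref{eq:semi-BBM-BBM1} is equivalent to the conservation form \eqref{eq:semi-BBM-BBM-cons1}, so $\frac{\mathrm{d}}{\mathrm{d}t}\cM = \bm 1^T M\bm\eta_t = -\bm 1^T M D_1\bm{F_1} = 0$ by $\bm 1^T M D_1 = \bm 0^T$ (\Cref{le:per-SBP-prop}). For $\cV$, where $D_2 = D_1^2$ is \emph{not} assumed, I would set $\bm z := (I - \frac{1}{6}D_2 K)^{-1}D_1(g\bm\eta + \frac{1}{2}\bm v^2)$, so that $(I - \frac{1}{6}D_2 K)\bm z = D_1(g\bm\eta + \frac{1}{2}\bm v^2)$; multiplying by $\bm 1^T M$ and using $\bm 1^T M D_1 = \bm 0^T$ together with $\bm 1^T M D_2 = \bm 0^T$ (both \Cref{le:per-SBP-prop}) gives $\bm 1^T M\bm z = 0$, hence $\frac{\mathrm{d}}{\mathrm{d}t}\cV = -\bm 1^T M\bm z = 0$. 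The same device reproves the $\cM$ identity for general $D_2$ if one prefers not to pass through the conservation form.

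\emph{Energy.} This is the core of the theorem, and it is where $D_2 = D_1^2$ and the diagonality of $M$ enter. Under $D_2 = D_1^2$ the scheme equals the conservation form, i.e.\ $\bm\eta_t = -D_1\bm{F_1}$ and $\bm v_t = -D_1\bm{F_2}$. Differentiating $\cE = \bm 1^T M\bm U$ with $\bm U = \frac{1}{2}g\bm\eta^2 + \frac{1}{2}(\bm\eta + \bm D)\bm v^2$ and using $\langle\bm a\bm b,\bm c\rangle_M = \langle\bm a,\bm b\bm c\rangle_M$ (valid for diagonal $M$),
\[
  \frac{\mathrm{d}}{\mathrm{d}t}\cE
  = \langle g\bm\eta + \tfrac12\bm v^2,\, \bm\eta_t\rangle_M
  + \langle (\bm\eta + \bm D)\bm v,\, \bm v_t\rangle_M .
\]
I would then insert $\bm\eta_t = -D_1\bm{F_1}$, $\bm v_t = -D_1\bm{F_2}$ and, crucially, rewrite the bracketed quantities through the definitions of $\bm{F_1},\bm{F_2}$ as $(\bm\eta + \bm D)\bm v = \bm{F_1} + \frac{1}{6}K D_1\bm\eta_t$ and $g\bm\eta + \frac{1}{2}\bm v^2 = \bm{F_2} + \frac{1}{6}D_1 K\bm v_t$. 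The part without $K$ is $-\langle\bm{F_2}, D_1\bm{F_1}\rangle_M - \langle\bm{F_1}, D_1\bm{F_2}\rangle_M$, which vanishes by the periodic skew-symmetry $\langle D_1\bm a,\bm b\rangle_M = -\langle\bm a,D_1\bm b\rangle_M$ — the discrete counterpart of the flat-bathymetry energy cancellation of the SWEs. The remaining dispersive part, after using $D_1\bm{F_1} = -\bm\eta_t$ and $D_1\bm{F_2} = -\bm v_t$ once more, is $\frac{1}{6}\langle D_1 K\bm v_t, \bm\eta_t\rangle_M + \frac{1}{6}\langle K D_1\bm\eta_t, \bm v_t\rangle_M$, and the two summands cancel because $\langle D_1 K\bm v_t, \bm\eta_t\rangle_M = -\langle K D_1\bm\eta_t, \bm v_t\rangle_M$, which is immediate from $D_1^T M = -M D_1$ together with the commutativity of the diagonal matrices $M$ and $K$. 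Hence $\frac{\mathrm{d}}{\mathrm{d}t}\cE = 0$.

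\emph{Well-balancedness and a preliminary.} Substituting the discrete lake at rest $\bm v = \bm 0$, $\bm\eta = c\bm 1$ ($c\in\R$) into the right-hand sides of \eqref{eq:semi-BBM-BBM1}--\eqref{eq:semi-BBM-BBM2} gives $\bm D\bm v + \bm\eta\bm v = \bm 0$ and $g\bm\eta + \frac{1}{2}\bm v^2 = gc\bm 1$, so $D_1(\bm D\bm v + \bm\eta\bm v) = \bm 0$ and $D_1(gc\bm 1) = gc\, D_1\bm 1 = \bm 0$ by consistency of $D_1$; applying the elliptic inverses yields $\bm\eta_t = \bm 0$ and $\bm v_t = \bm 0$, so the state is stationary (and this uses neither $D_2 = D_1^2$ nor diagonality of $M$). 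Before all of the above one should record that the elliptic operators are invertible: using $M D_1 = -D_1^T M$ one has $M(I - \frac{1}{6}D_1 K D_1) = M + \frac{1}{6}D_1^T M K D_1$, which is symmetric positive definite because $M$ is and $M K$ is diagonal with nonnegative entries; invertibility of $I - \frac{1}{6}D_2 K$ is needed likewise so that \eqref{eq:semi-BBM-BBM2} and \eqref{eq:semi-BBM-BBM-cons2} are well defined and equivalent.

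\emph{Main obstacle.} No single step is deep, but the delicate one is the bookkeeping in the energy estimate: the dispersive corrections carried by $\bm{F_1}$ and $\bm{F_2}$ must be tracked exactly and then recognized to assemble into a manifestly skew-symmetric combination that cancels. This is precisely the point at which $D_2 = D_1^2$ (so that the conservation form \eqref{eq:semi-BBM-BBM-cons1}--\eqref{eq:semi-BBM-BBM-cons2} is available) and a diagonal $M$ (so that discrete products may be shuffled in the inner product and $M$ commutes with $K$) are indispensable; the corresponding structural obstruction is exactly what prevents a direct adaptation of the constant-bathymetry proof from \cite{ranocha2021broad}.
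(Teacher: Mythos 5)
Your proposal is correct and follows essentially the same route as the paper: the linear invariants via $\bm 1^TMD_i = \bm 0^T$, the energy via the conservation form $\bm\eta_t = -D_1\bm{F_1}$, $\bm v_t = -D_1\bm{F_2}$ (available only for $D_2 = D_1^2$) combined with the skew-symmetry of $MD_1$ and the commutativity of the diagonal matrices $M$ and $K$, and well-balancedness from consistency of $D_1$. Your treatment of $\cV$ for general $D_2$ (using $\bm 1^TMD_2 = \bm 0^T$ directly on the elliptic equation rather than the conservation form) and the remark on invertibility of the elliptic operators are slightly more explicit than the paper's "similarly for $\cV$", but they do not change the argument.
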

\begin{proof}
    The conservation of $\cM$ and $\cV$ follows immediately by \Cref{le:per-SBP-prop} item 1:
    \begin{equation*}
        \frac{\tr d}{\tr dt}\cM(t; \bm\eta) = \bm 1^TM\bm\eta_t = -\bm 1^TMD_1\bm F_1 = 0
    \end{equation*}
    and similarly for $\cV$.\\
    Because $K$ and $M$ are diagonal, they commute and therefore we have:
    \begin{equation*}
        \langle\bm\eta_t, D_1K\bm v_t\rangle_M = -\langle KD_1\bm\eta_t, \bm v_t\rangle_M.
    \end{equation*}
    The assumption $D_2 = D_1^2$ allows us to use this together with \eqref{eq:semi-BBM-BBM-cons1}--\eqref{eq:semi-BBM-BBM-cons2} to obtain
    \begin{align*}
        &\quad
		\frac{\tr d}{\tr dt}\cE(t; \bm\eta, \bm v)
		= \frac{1}{2}\frac{\tr d}{\tr dt}\left(\langle g\bm\eta, \bm\eta\rangle_M + \langle\bm v^2, \bm D + \bm\eta\rangle_M\right)\\
        &= \langle g\bm\eta_t, \bm\eta\rangle_M + \frac{1}{2}\langle\bm v^2, \bm\eta_t\rangle_M + \langle \bm v\bm v_t, \bm D + \bm\eta\rangle_M - \frac{\langle\bm\eta_t, D_1K\bm v_t\rangle_M - \langle KD_1\bm\eta_t, \bm v_t\rangle_M}{6}\\
        &= \langle\bm\eta_t, g\bm\eta + \frac{1}{2}\bm v^2 - \frac{1}{6}D_1K\bm v_t\rangle_M + \langle\bm v_t, (\bm D + \bm\eta)\bm v - \frac{1}{6}KD_1\bm\eta_t\rangle_M \\
        &= \langle\bm\eta_t, \bm {F_2}\rangle_M + \langle\bm v_t, \bm {F_1}\rangle_M\\
        &= -\langle D_1\bm{F_1}, \bm{F_2}\rangle_M - \langle D_1\bm{F_2}, \bm{F_1}\rangle_M = \langle \bm{F_1}, D_1\bm{F_2}\rangle_M - \langle \bm{F_1}, D_1\bm{F_2}\rangle_M
        = 0.
    \end{align*}
    Well-balancedness is an immediate consequence of the consistency of $D_1$.
\end{proof}
An alternative semidiscretization to \eqref{eq:semi-BBM-BBM1}--\eqref{eq:semi-BBM-BBM2} that uses upwind operators instead of central first-derivative operators is given by
\begin{subequations}
\begin{align}
    \bm\eta_t + \left(I - \frac{1}{6}D_{1,-} KD_{1,+}\right)^{-1}D_{1,-}(\bm D\bm v + \bm\eta\bm v) &= \bm 0,\label{eq:semi-up-BBM-BBM1}\\
    \bm v_t + \left(I - \frac{1}{6}D_{1,+}D_{1,-}K\right)^{-1}D_{1,+}\left(g\bm\eta + \frac{1}{2}\bm v^2\right) &= \bm 0.\label{eq:semi-up-BBM-BBM2}
\end{align}
\end{subequations}
\begin{theorem}
	If $D_{1,\pm}$ form a periodic first-derivative upwind SBP operator, then the semidiscretization \eqref{eq:semi-up-BBM-BBM1}--\eqref{eq:semi-up-BBM-BBM2} conserves the linear invariants $\cM$ and $\cV$ \eqref{eq:discrete-total-mass-and-velocity} and the quadratic invariant $\cE$ \eqref{eq:discrete-total-entropy}. Furthermore, the scheme is well-balanced.
\end{theorem}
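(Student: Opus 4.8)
The plan is to adapt the proof of \Cref{th:semi-props-BBMBBM} almost verbatim, replacing the skew-adjoint operator $D_1$ by the pair $D_{1,\pm}$ and using the upwind pairing $\langle D_{1,+}\bm u,\bm v\rangle_M = -\langle\bm u, D_{1,-}\bm v\rangle_M$ in its place. No analogue of the condition $D_2 = D_1^2$ is needed here, since the elliptic operators $I - \frac16 D_{1,-}KD_{1,+}$ and $I - \frac16 D_{1,+}D_{1,-}K$ already have the product structure that the conservation-form rewriting requires. First I would note that these two operators are invertible: both are self-adjoint with respect to $\langle\cdot,\cdot\rangle_M$ (the second one after conjugation by $\diag(\bm D)$, assuming a fully wet domain $\bm D > \bm 0$), and since $K = \diag(\bm D^2)$ is positive semidefinite one has $\langle D_{1,-}KD_{1,+}\bm u,\bm u\rangle_M = -\langle KD_{1,+}\bm u, D_{1,+}\bm u\rangle_M \le 0$ (likewise for the conjugated second operator), so each is a positive-definite perturbation of the identity and the semidiscretization is well defined.

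Next I would put the scheme in conservation form. Because $\frac16 D_{1,-}KD_{1,+}\bm\eta_t = D_{1,-}\bigl(\frac16 KD_{1,+}\bm\eta_t\bigr)$ and $\frac16 D_{1,+}D_{1,-}K\bm v_t = D_{1,+}\bigl(\frac16 D_{1,-}K\bm v_t\bigr)$, multiplying \eqref{eq:semi-up-BBM-BBM1}--\eqref{eq:semi-up-BBM-BBM2} by the respective elliptic operators yields $\bm\eta_t = -D_{1,-}\bm F_1$ and $\bm v_t = -D_{1,+}\bm F_2$ with $\bm F_1 := \bm D\bm v + \bm\eta\bm v - \frac16 KD_{1,+}\bm\eta_t$ and $\bm F_2 := g\bm\eta + \frac12\bm v^2 - \frac16 D_{1,-}K\bm v_t$. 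Conservation of $\cM$ and $\cV$ is then immediate from \Cref{le:per-SBP-prop} item~1: $\frac{\tr d}{\tr dt}\cM = \bm 1^TM\bm\eta_t = -\bm 1^TMD_{1,-}\bm F_1 = 0$ and $\frac{\tr d}{\tr dt}\cV = -\bm 1^TMD_{1,+}\bm F_2 = 0$.

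For the quadratic invariant I would differentiate $\cE = \frac12\langle g\bm\eta,\bm\eta\rangle_M + \frac12\langle\bm v^2,\bm D + \bm\eta\rangle_M$, use that $K$ and $M$ are diagonal (so $K$ is $M$-self-adjoint and $\langle\bm u\bm v,\bm w\rangle_M = \langle\bm u,\bm v\bm w\rangle_M$) together with the identity $\langle\bm\eta_t, D_{1,-}K\bm v_t\rangle_M = -\langle KD_{1,+}\bm\eta_t,\bm v_t\rangle_M$ (the upwind pairing plus $K$ commuting with $M$), and check that the two $\tfrac16$-terms coming from the definitions of $\bm F_1,\bm F_2$ cancel, leaving $\frac{\tr d}{\tr dt}\cE = \langle\bm\eta_t,\bm F_2\rangle_M + \langle\bm v_t,\bm F_1\rangle_M$. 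Substituting $\bm\eta_t = -D_{1,-}\bm F_1$, $\bm v_t = -D_{1,+}\bm F_2$ and applying the upwind pairing once more gives $\frac{\tr d}{\tr dt}\cE = -\langle D_{1,-}\bm F_1,\bm F_2\rangle_M - \langle D_{1,+}\bm F_2,\bm F_1\rangle_M = \langle\bm F_1, D_{1,+}\bm F_2\rangle_M - \langle D_{1,+}\bm F_2,\bm F_1\rangle_M = 0$ by the symmetry of $M$. Well-balancedness follows exactly as in \Cref{th:semi-props-BBMBBM}: for $\bm v = \bm 0$ and $\bm\eta$ constant, $\bm D\bm v + \bm\eta\bm v = \bm 0$ and $D_{1,+}(g\bm\eta + \frac12\bm v^2) = g D_{1,+}\bm\eta = \bm 0$ by consistency of $D_{1,+}$, so $\bm\eta_t = \bm v_t = \bm 0$. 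The only mildly delicate point throughout is bookkeeping: making sure $K$ is sandwiched between $D_{1,-}$ and $D_{1,+}$ in exactly the order that makes the conservation-form factorization and the cross-term cancellation go through, and that the upwind adjoint relation is applied with the correct sign on each side. Notably, the upwind dissipation property ``$\frac12 M(D_{1,+} - D_{1,-})$ negative semidefinite'' is \emph{not} used: the scheme turns out to be exactly energy-conservative rather than merely energy-dissipative.
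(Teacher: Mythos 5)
Your proposal is correct and is exactly the route the paper takes: the paper's proof of this theorem is a one-line reference to repeating the argument of \Cref{th:semi-props-BBMBBM} with the upwind pairing $\langle D_{1,+}\bm u,\bm v\rangle_M = -\langle\bm u,D_{1,-}\bm v\rangle_M$ in place of the skew-symmetry of $D_1$, which is what you carry out (your conservation-form fluxes even coincide with the $\bm{F_{1,+}},\bm{F_{2,-}}$ the paper later writes down for the reflecting-boundary upwind variant). Your added observations — that no analogue of $D_2 = D_1^2$ is needed because the elliptic operators are already factored, and that the negative-semidefiniteness of $\frac12 M(D_{1,+}-D_{1,-})$ is never used so the scheme is exactly conservative — are both accurate.
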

\begin{proof}
	The proof can be obtained in the same way as in \Cref{th:semi-props-BBMBBM}.
\end{proof}
The scheme \eqref{eq:semi-up-BBM-BBM1}--\eqref{eq:semi-up-BBM-BBM2} has the advantage that the wide-stencil operators $D_1KD_1$ and $D_1^2K$ are replaced by consecutively applying one upwind operator in positive direction and one in negative direction, which often results in a better approximation, cf. \Cref{fig:errors-stencils}.
Note that swapping all $D_{1,-}$ and $D_{1,+}$ also yields an energy-conservative scheme.
In this semidiscretization, the discrete derivative operators applied to
the water height and the velocity are different. This is also the case in
the semidiscretization of the classical SWEs of
\cite{hew2023strongly}.

Next, we consider reflecting boundary conditions, i.e., \eqref{eq:BBMBBM1}--\eqref{eq:BBMBBM2} for $t > 0$,
$x\in (x_\text{min}, x_\text{max})$ and
\begin{equation*}
	\eta_x(t, x) = 0, \qquad
	v(t, x) = 0,
\end{equation*}
for $t > 0$ and $x\in\{x_\text{min}, x_\text{max}\}$. Similar to \cite{ranocha2021broad} we construct SBP operators
that impose the homogeneous Dirichlet boundary condition for $v$ strongly and the homogeneous Neumann boundary
condition of $\eta$ in a weak-strong sense. To this end, we define $P_D = \diag(0, 1, \ldots, 1, 0)$ as the projection
to the inner nodes. Then we have $P_D\bm e_{R/L}\bm e_{R/L}^T = 0$. Define for some $\delta\in\R$ and a first-derivative
SBP operator $D_1$ the operators with
\begin{itemize}
	\item homogeneous Dirichlet boundary conditions $I - \delta D_{2,D}K$, which satisfies for $\bm v, \bm w\in \R^N$:
	\begin{equation*}
		\bm v = (I - \delta D_{2,D}K)^{-1}\bm w \Leftrightarrow P_D(I - \delta D_{2,D}K)\bm v = P_D\bm w\quad\text{and}\quad (I - P_D)\bm v = \bm 0,
	\end{equation*}
	i.e., $v_1 = v_N = 0$ and if $I - \delta D_{2,D}$ induced by $I - \delta D_1^2$ also $P_DD_{2,D} = P_DD_1^2$ and
	\item weak-strong imposition of homogeneous Neumann boundary conditions $I - \delta D_{2,N}^K$, which satisfies for $\bm v, \bm w\in \R^N$:
	\begin{equation*}
		\bm v = (I - \delta D_{2,N}^K)^{-1}\bm w \Leftrightarrow (I + \delta M^{-1}D_1^TMP_DKD_1)\bm v = \bm w.
	\end{equation*}
	Since $D_1^TMP_D = -MD_1P_D + \bm e_R\bm e_R^TP_D - \bm e_L\bm e_L^TP_D = -MD_1P_D$, we have $D_{2,N}^K = D_1P_DKD_1$.
\end{itemize}
We note that the idea of the operator $D_{2,D}$ is very similar to the Dirichlet-SBP operator analyzed in \cite{gjesteland2022entropy}.
Then the semidiscretization reads
\begin{subequations}
	\begin{align}
		\bm\eta_t &= -\left(I - \frac{1}{6}D_{2,N}^K\right)^{-1}D_1(\bm D\bm v + \bm\eta\bm v),\label{eq:semi-BBM-BBM-reflecting1}\\
		\bm v_t &= -\left(I - \frac{1}{6}D_{2,D}K\right)^{-1}D_1\left(g\bm\eta + \frac{1}{2}\bm v^2\right)\label{eq:semi-BBM-BBM-reflecting2}.
	\end{align}
\end{subequations}
Assuming that $P_DD_{2,D} = P_DD_1^2$ we can write \eqref{eq:semi-BBM-BBM-reflecting1}--\eqref{eq:semi-BBM-BBM-reflecting2}
in first-order form similar to \eqref{eq:semi-BBM-BBM-cons1}--\eqref{eq:semi-BBM-BBM-cons2}
\begin{subequations}
	\begin{align}
		\bm\eta_t + D_1P_D\bm{F_1} &= \bm 0,\label{eq:semi-BBM-BBM-reflecting-first-order1}\\
		\bm v_t + P_DD_1\bm{F_2} &= \bm 0,\label{eq:semi-BBM-BBM-reflecting-first-order2}
	\end{align}
\end{subequations}
by additionally imposing $v_1 = v_N = 0$. Here, $\bm{F_1}$ and $\bm{F_2}$ are defined as before. The equivalence of
these two formulations follows by multiplying the second equation \eqref{eq:semi-BBM-BBM-reflecting2} with
$P_D$ and using $P_D\bm v_t = \bm v_t$ and $P_D(\bm D\bm v + \bm\eta\bm v) = (\bm D\bm v + \bm\eta\bm v)$ both
following from $P_D\bm v = \bm v$. With the first-order formulation we are positioned to prove the energy conservation
of the semidiscretization.
\begin{theorem}\label{th:semi-props-BBMBBM-reflecting}
	The semidiscretization given by \eqref{eq:semi-BBM-BBM-reflecting1}--\eqref{eq:semi-BBM-BBM-reflecting2} conserves
	the total mass and given that $P_DD_{2,D} = P_DD_1^2$, it also conserves the total energy. In addition, the scheme
	is well-balanced.
\end{theorem}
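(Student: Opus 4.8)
The plan is to follow the blueprint of the proof of \Cref{th:semi-props-BBMBBM}, but starting from the first-order reformulation \eqref{eq:semi-BBM-BBM-reflecting-first-order1}--\eqref{eq:semi-BBM-BBM-reflecting-first-order2} and keeping careful track of the boundary terms that the non-periodic SBP identity \eqref{eq:1-sbp} produces. The two facts that make everything collapse are: the projection $P_D = \diag(0,1,\ldots,1,0)$ commutes with the diagonal $M$ and satisfies $P_D\bm e_{R/L} = \bm 0$, so it annihilates the endpoint contributions of \eqref{eq:1-sbp}; and the velocity is pinned to zero at the two endpoints, so $v_1 = v_N = 0$ forces $(\bm v_t)_1 = (\bm v_t)_N = 0$.

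For mass conservation I would differentiate $\cM = \bm 1^TM\bm\eta$ and insert \eqref{eq:semi-BBM-BBM-reflecting-first-order1}. Since $D_1\bm 1 = \bm 0$ by consistency, the SBP relation \eqref{eq:1-sbp} gives $\bm 1^TMD_1 = \bm e_R^T - \bm e_L^T$, and right-multiplication by $P_D$ kills both terms, so $\frac{\tr d}{\tr dt}\cM = -\bm 1^TMD_1P_D\bm{F_1} = 0$; this step does not need the compatibility assumption $P_DD_{2,D} = P_DD_1^2$. For the energy, I would differentiate $\cE = \bm 1^TM\bm U$ with $\bm U$ as in \eqref{eq:discrete-total-entropy} and, exactly as in \Cref{th:semi-props-BBMBBM}, rewrite the result as $\langle\bm\eta_t,\bm{F_2}\rangle_M + \langle\bm v_t,\bm{F_1}\rangle_M$. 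The only nonroutine point in this rewriting is the dispersive cross term: one needs $\langle\bm\eta_t, D_1K\bm v_t\rangle_M + \langle\bm v_t, KD_1\bm\eta_t\rangle_M = 0$, and applying \eqref{eq:1-sbp} together with the commutation of the diagonal matrices $K$ and $M$ now leaves a boundary residual proportional to $K_{NN}(\bm\eta_t)_N(\bm v_t)_N - K_{11}(\bm\eta_t)_1(\bm v_t)_1$, which vanishes precisely because $(\bm v_t)_1 = (\bm v_t)_N = 0$.

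Having $\frac{\tr d}{\tr dt}\cE = \langle\bm\eta_t,\bm{F_2}\rangle_M + \langle\bm v_t,\bm{F_1}\rangle_M$, I would substitute $\bm\eta_t = -D_1P_D\bm{F_1}$ and $\bm v_t = -P_DD_1\bm{F_2}$. Using \eqref{eq:1-sbp} and $P_D\bm e_{R/L} = \bm 0$ turns $-\langle D_1P_D\bm{F_1},\bm{F_2}\rangle_M$ into $\langle P_D\bm{F_1}, D_1\bm{F_2}\rangle_M$ with no leftover boundary term (the endpoint contributions carry the vanishing factor $(P_D\bm{F_1})_{1/N}$), while $-\langle P_DD_1\bm{F_2},\bm{F_1}\rangle_M = -\langle P_D\bm{F_1}, D_1\bm{F_2}\rangle_M$ because $P_D$ and $M$ commute and $M$ is symmetric; the two contributions cancel and $\frac{\tr d}{\tr dt}\cE = 0$. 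Well-balancedness is the same one-line argument as before: for the lake at rest $\bm v = \bm 0$, $\bm\eta = c\bm 1$ one gets $\bm D\bm v + \bm\eta\bm v = \bm 0$ and $D_1\!\left(g\bm\eta + \tfrac12\bm v^2\right) = gc\,D_1\bm 1 = \bm 0$ by consistency of $D_1$, so both right-hand sides of \eqref{eq:semi-BBM-BBM-reflecting1}--\eqref{eq:semi-BBM-BBM-reflecting2} vanish and the discrete boundary conditions $v_1 = v_N = 0$, $\eta_x = 0$ are met.

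The main obstacle is the boundary bookkeeping: one has to confirm that the compatibility condition $P_DD_{2,D} = P_DD_1^2$ is genuinely what makes the first-order form \eqref{eq:semi-BBM-BBM-reflecting-first-order2} of the velocity equation valid (without it the telescoping cancellation in the last step breaks), and that the strong imposition of $v = 0$ at the endpoints is used in exactly the two places where it is needed — once to kill the dispersive cross-term residual and implicitly again through $P_D\bm v_t = \bm v_t$ — whereas mass conservation and well-balancedness go through without either extra hypothesis.
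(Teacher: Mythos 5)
Your proposal is correct and follows essentially the same route as the paper's proof: mass conservation via $\bm 1^TMD_1P_D = \bm 0^T$, energy conservation by reducing $\frac{\tr d}{\tr dt}\cE$ to $\langle\bm\eta_t,\bm{F_2}\rangle_M + \langle\bm v_t,\bm{F_1}\rangle_M$ and cancelling the two terms using the SBP identity together with $P_D\bm e_{R/L} = \bm 0$ and the diagonality of $P_D$ and $M$, and well-balancedness from consistency of $D_1$. Your explicit tracking of the boundary residual in the dispersive cross term, killed by $(\bm v_t)_1 = (\bm v_t)_N = 0$, is a detail the paper subsumes under ``the same steps as in Theorem 3.8,'' and it is handled correctly.
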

\begin{proof}
	If $v_1 = v_N = 0$ initially, we also have $\bm e_{R/L}\bm v_t = 0$ due to the Dirichlet boundary condition
	operator, which means that $\bm v$ satisfies the homogeneous boundary conditions for all times.
	The total mass is conserved due to
	\begin{equation*}
		\frac{\tr d}{\tr dt}\cM(t; \bm\eta) = \bm 1^TM\bm\eta_t = -\bm 1^TMD_1P_D\bm{F_1} = 0.
	\end{equation*}
	For the energy conservation, the same steps as in \Cref{th:semi-props-BBMBBM} result in
	\begin{align*}
		\frac{\tr d}{\tr dt}\cE(t; \bm\eta, \bm v) &= \langle\bm\eta_t, \bm {F_2}\rangle_M + \langle\bm v_t, \bm {F_1}\rangle_M
	\end{align*}
	and plugging in the semidiscretization \eqref{eq:semi-BBM-BBM-reflecting-first-order1}--\eqref{eq:semi-BBM-BBM-reflecting-first-order2} yields
	\begin{align*}
		\frac{\tr d}{\tr dt}\cE(t; \bm\eta, \bm v) &= -\langle D_1P_D\bm {F_1}, \bm {F_2}\rangle_M - \langle P_DD_1\bm{F_2}, \bm {F_1}\rangle_M
		\\
		&= \langle P_D\bm{F_1}, D_1\bm{F_2}\rangle_M - \langle \bm{F_1}, P_DD_1\bm{F_2}\rangle_M = 0,
	\end{align*}
	where in the second to last step the boundary terms vanish due to the multiplication by $P_D$ and the last
	step follows from the fact that $P_D$ is diagonal.

	Well-balancedness follows immediately from the consistency of $D_1$.
\end{proof}
We can formulate an upwind variant by using the operators $D_{2,D,\pm}$ and $D_{2,N,\mp}^K$ satisfying $D_{2,D,\pm} = P_DD_{1,+}D_{1,-}$ and $D_{2,N,\mp}^K = D_{1,-}P_DKD_{1,+}$ respectively:
\begin{subequations}
	\begin{align}
		\bm\eta_t &= -\left(I - \frac{1}{6}D_{2,N,\mp}^K\right)^{-1}D_{1,-}(\bm D\bm v + \bm\eta\bm v),\label{eq:semi-BBM-BBM-reflecting-upwind1}\\
		\bm v_t &= -\left(I - \frac{1}{6}D_{2,D,\pm}K\right)^{-1}D_{1,+}\left(g\bm\eta + \frac{1}{2}\bm v^2\right)\label{eq:semi-BBM-BBM-reflecting-upwind2}.
	\end{align}
\end{subequations}
This semidiscretization satisfies the first-order formulation (in addition to $v_1 = v_N = 0$)
\begin{align*}
	\bm\eta_t + D_{1,-}P_D\bm{F_{1,+}} &= \bm 0, &\quad
	\bm v_t + P_DD_{1,+}\bm{F_{2,-}} &= \bm 0
\end{align*}
where, similar to before, $\bm{F_{1,+}} = (\bm D + \bm\eta)\bm v - \frac{1}{6}KD_{1,+}\bm\eta_t$ and $\bm{F_{2,-}} = g\bm\eta + \frac{1}{2}\bm v^2 - \frac{1}{6}D_{1,-}K\bm v_t$. Again, a proof of the conservation of $\cM$ and $\cE$ as well as well-balancedness of the scheme can be conducted similarly to \Cref{th:semi-props-BBMBBM-reflecting}.

\subsection{Semidiscretization of the dispersive model of Svärd and Kalisch}\label{sect:num-svard-kalisch}
In \cite{svard2023novel} the authors present
a well-balanced and entropy-stable finite volume discretization of \eqref{eq:svaerd-kalisch1},
\eqref{eq:svaerd-kalisch2} with implicit-explicit time-stepping. Instead, we propose a new semidiscretization
based on SBP operators, which preserves the discrete total modified entropy
\begin{equation}\label{eq:discrete-total-mod-entropy}
    \hat\cE(t; \bm h, \bm P) = \bm 1^TM\bm{\hat U},
	\quad
    \bm{\hat U}(\bm h, \bm P) = \bm U(\bm h, \bm P) + \frac{1}{2}\bm{\hat\beta}(D_1\bm v)^2,
\end{equation}
cf.\ \eqref{eq:total-mod-entropy} and \eqref{eq:mod-entropy}.
Since we need the product and chain rules in \Cref{th:entropy-cons-SK},
we use split forms to guarantee energy conservation of the schemes, see, e.g.\ \cite{fisher2013discretely,gassner2016split,wintermeyer2017entropy,ranocha2017shallow,ranocha2021broad}.
For the shallow water part, we use the same split form as in \cite{ranocha2017shallow}. The dispersive terms are discretized in analogy to \eqref{eq:svaerd-kalisch1}--\eqref{eq:svaerd-kalisch2} using periodic first- and second-derivative SBP operators $D_1$ and $D_2$, and a split form for the $\alpha$-terms in the momentum equation.
The resulting semidiscretization reads
\begin{subequations}
\begin{align}
    \bm h_t ={}& -D_1(\bm h\bm v) + D_1\bm y,\label{eq:semi-SK1}\\
    \begin{split}
    (\bm h\bm v)_t ={}& -\frac{1}{2}\left(D_1(\bm h\bm v^2) + \bm h\bm vD_1\bm v + \bm vD_1(\bm h\bm v)\right) - g\bm hD_1(\bm h + \bm b)\, + \\
    &\frac{1}{2}(D_1(\bm v\bm y) + \bm vD_1\bm y + \bm yD_1\bm v) + D_1(\bm{\hat\beta}D_1\bm v_t)\, + \\
    &\frac{1}{2}D_2(\bm{\hat\gamma}D_1\bm v) + \frac{1}{2}D_1(\bm{\hat\gamma}D_2\bm v),
    \end{split}\label{eq:semi-SK2}
\end{align}
\end{subequations}
where we use the shorthand-notation
\begin{equation*}
    \bm y = \bm{\hat\alpha} D_1(\bm{\hat\alpha}D_1(\bm h + \bm b)) = \bm{\hat\alpha} D_1(\bm{\hat\alpha}D_1\bm\eta).
\end{equation*}
The coefficients $\bm{\hat\alpha}$, $\bm{\hat\beta}$ and $\bm{\hat\gamma}$ are given by
\begin{equation*}
    \bm{\hat\alpha}^2 = \tilde\alpha\sqrt{g\bm D}\bm D^2, \quad\bm{\hat\beta} = \tilde\beta \bm D^3, \quad\bm{\hat\gamma} = \tilde\gamma\sqrt{g\bm D}\bm D^3
\end{equation*}
for the still water depth $\bm D = \eta_0 - \bm b$. Note that we have a time-derivative on the left- and on the right-hand side of the equations in \eqref{eq:semi-SK2}. Therefore, in order to obtain an implementable semidiscretization, we rewrite \eqref{eq:semi-SK2} in primitive variables by using the product rule in time and \eqref{eq:semi-SK1} yielding
\begin{align*}
    \bm\eta_t ={}& D_1(\bm y - (\bm\eta + \bm D)\bm v)\\
    \bm v_t ={}& \left((\bm\eta + \bm D) - D_1\bm{\hat\beta}D_1\right)^{-1}\Bigg(\Bigg.-\frac{1}{2}\left(D_1((\bm\eta + \bm D)\bm v^2) + (\bm\eta + \bm D)\bm vD_1\bm v - \bm vD_1((\bm\eta + \bm D)\bm v)\right)\, - \nonumber\\
    &\qquad\qquad g(\bm\eta + \bm D)D_1\bm\eta + \frac{1}{2}(D_1(\bm v\bm y) -\bm vD_1\bm y + \bm yD_1\bm v) + \frac{1}{2}D_2(\bm{\hat\gamma}D_1\bm v) + \frac{1}{2}D_1(\bm{\hat\gamma}D_2\bm v)\Bigg)\Bigg.,
\end{align*}
where $(\bm\eta + \bm D)$ and $\bm{\hat\beta}$ in the matrix $(\bm\eta + \bm D) - D_1\bm{\hat\beta}D_1$ have to be understood as diagonal matrices.
Notice that the SBP property together with $\bm\eta + \bm D > 0$ and $\bm{\hat\beta} > 0$ implies the
symmetry and positive definiteness of $M((\bm\eta + \bm D) - D_1\bm{\hat\beta}D_1)$
and thus the invertibility of $(\bm\eta + \bm D) - D_1\bm{\hat\beta}D_1$.
\begin{theorem}\label{th:semi-props-SK}
    The semidiscretization \eqref{eq:semi-SK1}--\eqref{eq:semi-SK2} conserves the total mass $\cM$ and total discharge $\cI$ for constant bathymetry, conserves the total modified entropy \eqref{eq:discrete-total-mod-entropy}, and is well-balanced.
\end{theorem}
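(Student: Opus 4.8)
The plan is to verify the three assertions separately, in each case mimicking the corresponding continuous computation at the discrete level using only the SBP properties collected in \Cref{le:per-SBP-prop} and the fact that $M$ is diagonal (so $\langle\bm u\bm v,\bm w\rangle_M=\langle\bm u,\bm v\bm w\rangle_M$).

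\textbf{Linear invariants and well-balancedness.} Testing \eqref{eq:semi-SK1} with $\bm 1^TM$ and using $\bm 1^TMD_1=\bm 0^T$ (item~1 of \Cref{le:per-SBP-prop}) gives $\tfrac{\tr d}{\tr dt}\cM=\bm 1^TM\bm h_t=0$ for arbitrary bathymetry. For constant $\bm b$ we have $D_1(\bm h+\bm b)=D_1\bm h$, and testing \eqref{eq:semi-SK2} with $\bm 1^TM$: the three $D_1(\cdot)$ and the $D_2(\cdot)$ terms vanish by item~1, the split-form pairs $\bm h\bm vD_1\bm v$/$\bm vD_1(\bm h\bm v)$ and $\bm vD_1\bm y$/$\bm yD_1\bm v$ cancel via skew-symmetry of $D_1$ and the diagonality of $M$, and $g\bm hD_1\bm h$ contributes $g\langle\bm h,D_1\bm h\rangle_M=0$ by item~2; hence $\tfrac{\tr d}{\tr dt}\cI=0$. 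For the lake at rest ($\bm v=\bm 0$, $\bm\eta=\bm h+\bm b$ constant), consistency of $D_1$ gives $D_1\bm\eta=\bm 0$, so $\bm y=\bm{\hat\alpha}D_1(\bm{\hat\alpha}D_1\bm\eta)=\bm 0$; then $\bm h_t=\bm 0$, and every term of \eqref{eq:semi-SK2} carries a factor $\bm v$, $D_1\bm v$, $D_1(\bm h+\bm b)$ or $\bm y$, so $(\bm h\bm v)_t=\bm 0$, and since $\bm h>0$ this forces $\bm v_t=\bm 0$.

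\textbf{Modified entropy.} With $\bm U$ the discrete SWE energy (i.e., the discretization of \eqref{eq:entropy-swe}, $\bm U=\tfrac12\bm P^2/\bm h+\tfrac12 g\bm h^2+g\bm h\bm b$ with $\bm P=\bm h\bm v$), the discrete entropy variables are $\nabla_{\bm u}\bm U=(g\bm\eta-\tfrac12\bm v^2,\bm v)^T$, so
\begin{equation*}
\frac{\tr d}{\tr dt}\hat\cE = \langle\bm w_1,\bm h_t\rangle_M + \langle\bm w_2,(\bm h\bm v)_t\rangle_M + \langle\bm{\hat\beta}D_1\bm v,D_1\bm v_t\rangle_M,
\qquad \bm w_1=g\bm\eta-\tfrac12\bm v^2,\ \bm w_2=\bm v .
\end{equation*}
I would substitute \eqref{eq:semi-SK1}--\eqref{eq:semi-SK2} and group the result into SWE-, $\alpha$-, $\beta$- and $\gamma$-contributions. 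The SWE split-form part cancels exactly as in \cite{ranocha2017shallow}: the $\bm h_t$-piece $\langle g\bm\eta-\tfrac12\bm v^2,-D_1(\bm h\bm v)\rangle_M$, the momentum flux split $\langle\bm v,-\tfrac12(D_1(\bm h\bm v^2)+\bm h\bm vD_1\bm v+\bm vD_1(\bm h\bm v))\rangle_M=-\tfrac12\langle\bm v^2,D_1(\bm h\bm v)\rangle_M$, and the source $\langle\bm v,-g\bm hD_1\bm\eta\rangle_M=\langle g\bm\eta,D_1(\bm h\bm v)\rangle_M$ sum to zero. The $\alpha$-terms of \eqref{eq:semi-SK1} and \eqref{eq:semi-SK2} combine: using skew-symmetry of $D_1$ and diagonality of $M$, the split form $\tfrac12(D_1(\bm v\bm y)+\bm vD_1\bm y+\bm yD_1\bm v)$ collapses against $\bm w_2=\bm v$ to $\tfrac12\langle\bm v^2,D_1\bm y\rangle_M$, which together with the $\bm h_t$-piece $\langle g\bm\eta-\tfrac12\bm v^2,D_1\bm y\rangle_M$ gives $g\langle\bm\eta,D_1\bm y\rangle_M=-g\langle\bm{\hat\alpha}D_1\bm\eta,D_1(\bm{\hat\alpha}D_1\bm\eta)\rangle_M=0$ by item~2 with $\bm u=\bm{\hat\alpha}D_1\bm\eta$. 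The $\beta$-term gives $\langle\bm v,D_1(\bm{\hat\beta}D_1\bm v_t)\rangle_M=-\langle\bm{\hat\beta}D_1\bm v,D_1\bm v_t\rangle_M$, which exactly cancels the last term of the display; this is precisely why the conserved quantity is the \emph{modified} entropy. Finally, the $\gamma$-terms give $\tfrac12\langle\bm v,D_2(\bm{\hat\gamma}D_1\bm v)+D_1(\bm{\hat\gamma}D_2\bm v)\rangle_M=\tfrac12\langle D_2\bm v,\bm{\hat\gamma}D_1\bm v\rangle_M-\tfrac12\langle\bm{\hat\gamma}D_2\bm v,D_1\bm v\rangle_M=0$, using $MD_2=D_2^TM$ \eqref{eq:2-per-sbp}, skew-symmetry of $D_1$, and diagonality of $M$. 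Hence $\tfrac{\tr d}{\tr dt}\hat\cE=0$.

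\textbf{Main obstacle.} The delicate part is the bookkeeping of the nonlinear SWE split form together with the well-balanced bathymetry term, and verifying that the particular split chosen for the $\alpha$-terms is exactly the one for which the $\bm v$-weighted momentum contribution collapses to $\tfrac12\langle\bm v^2,D_1\bm y\rangle_M$, so that the leftover $g\langle\bm\eta,D_1\bm y\rangle_M$ can be annihilated by item~2: a different, equally plausible split of $\bm y$ would leave a nonzero residual. One also has to notice at the outset that the $\beta$-term cannot be written as a discrete spatial ``divergence'' and that its contribution is instead absorbed by $\tfrac{\tr d}{\tr dt}\tfrac12\langle\bm{\hat\beta}D_1\bm v,D_1\bm v\rangle_M$ — which dictates that the statement must be about $\hat\cE$ \eqref{eq:discrete-total-mod-entropy} rather than the plain discretization of $\int_\Omega U$, and which also circumvents having to solve for $\bm v_t$ despite its appearance on both sides of \eqref{eq:semi-SK2}.
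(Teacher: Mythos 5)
Your proposal is correct and follows essentially the same route as the paper: mass/momentum conservation via $\bm 1^TMD_i=\bm 0^T$ and the skew-symmetry of $MD_1$, cancellation of the SWE and $\alpha$-contributions through the split forms, the $\gamma$-terms via $MD_2=D_2^TM$, and absorption of the $\beta$-term into the time derivative of $\tfrac12\langle\bm{\hat\beta}D_1\bm v,D_1\bm v\rangle_M$. The only (cosmetic) difference is that you carry the $\bm{\hat\beta}(D_1\bm v)^2$ contribution inside $\tfrac{\tr d}{\tr dt}\hat\cE$ from the outset, whereas the paper first computes $\tfrac{\tr d}{\tr dt}\cE$ and identifies the leftover $\beta$-residual as $-\tfrac12\bm 1^TM\bm{\hat\beta}((D_1\bm v)^2)_t$ at the end.
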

\begin{proof}
	The proof for the conservation properties of the shallow water part can also be found in \cite[Theorem 1]{wintermeyer2017entropy} with the conversion rules from \cite[Lemma 1]{gassner2016split} (see also \cite{ranocha2017shallow}). For completeness, it is also presented here.

    \Cref{le:per-SBP-prop} item 1 implies the conservation of mass:
    \begin{equation*}
        \frac{\tr d}{\tr dt}\cM(t; \bm h) = \bm 1^TM\bm h_t = -\bm 1^TMD_1(\bm y - \bm h\bm v) = 0.
    \end{equation*}
    Assuming $\bm b = const$, the conservation of the total momentum $\cI$ follows by applying \Cref{le:per-SBP-prop} items 1 and 2:
    \begin{align*}
        \frac{\tr d}{\tr dt}\cI(t; \bm P) ={}& \bm 1^TM(\bm h\bm v)_t \\
        ={}& -\frac{1}{2}\bm 1^TM(D_1(\bm h\bm v^2) + \bm h\bm vD_1\bm v + \bm vD_1(\bm h\bm v)) - g\bm 1^TM(\bm hD_1\bm h) +\\
        &\frac{1}{2}\bm 1^TM(D_1(\bm v\bm y) + \bm vD_1\bm y + \bm yD_1\bm v) + \bm 1^TMD_1(\bm{\hat\beta}D_1\bm v_t)\, + \\
        &\frac{1}{2}\bm 1^TM(D_2(\bm{\hat\gamma} D_1\bm v) + D_1(\bm{\hat\gamma}D_2\bm v))\\
        ={}& -\frac{1}{2}\langle\bm h\bm v, D_1\bm v\rangle_M  -\frac{1}{2}\langle\bm v, D_1(\bm h\bm v)\rangle_M - g\langle\bm h, D_1\bm h\rangle_M +\\
        &\frac{1}{2}\langle\bm v,D_1\bm y\rangle_M + \frac{1}{2}\langle\bm y,D_1\bm v\rangle_M = 0.
    \end{align*}
    For the conservation of the total modified entropy, we begin by looking at the change of the total entropy \eqref{eq:discrete-total-entropy}.
    \begin{align*}
        &\quad \frac{\tr d}{\tr dt}\cE(t; \bm h, \bm P) \\
		&= \bm 1^TM\bm U_t = \langle\bm h_t,g(\bm h + \bm b) - \frac{1}{2}\bm v^2\rangle_M + \langle (\bm h\bm v)_t,\bm v\rangle_M\\
        &= -\langle D_1(\bm h\bm v), g(\bm h + \bm b) - \frac{1}{2}\bm v^2\rangle_M -\frac{1}{2}\left(\langle D_1(\bm h\bm v^2), \bm v\rangle + \langle \bm h\bm v^2,D_1\bm v\rangle_M + \langle \bm v^2, D_1(\bm h\bm v)\rangle_M\right) -\\
        &\quad\;g\langle \bm h\bm v,D_1(\bm h + \bm b)\rangle_M + \langle D_1(\bm{\hat\beta}D_1\bm v_t), \bm v\rangle_M + \frac{1}{2}\langle D_2(\bm{\hat\gamma}D_1\bm v),\bm v\rangle_M + \frac{1}{2}\langle D_1(\bm{\hat\gamma}D_2\bm v),\bm v\rangle_M\, + \\
        &\quad\;\langle D_1\bm y, g(\bm h + \bm b) - \frac{1}{2}\bm v^2\rangle_M + \frac{1}{2}\langle D_1(\bm v\bm y),\bm v\rangle_M + \frac{1}{2}\langle\bm vD_1\bm y,\bm v\rangle_M + \frac{1}{2}\langle\bm yD_1\bm v,\bm v\rangle_M.
    \end{align*}
    The terms coming from the SWEs cancel due to the split form:
    \begin{equation*}
    	-g\langle D_1(\bm h\bm v),\bm h + \bm b\rangle_M - g\langle \bm h\bm v,D_1(\bm h + \bm b)\rangle_M - \frac{1}{2}\langle D_1(\bm h\bm v^2), \bm v\rangle_M - \frac{1}{2}\langle \bm h\bm v^2,D_1\bm v\rangle_M = 0.
    \end{equation*}
    Next, we consider the $\gamma$-terms:
    \begin{equation*}
        \frac{1}{2}\langle D_2(\bm{\hat\gamma}D_1\bm v),\bm v\rangle_M + \frac{1}{2}\langle D_1(\bm{\hat\gamma}D_2\bm v),\bm v\rangle_M = \frac{1}{2}(\langle\bm{\hat\gamma}D_1\bm v, D_2\bm v\rangle_M - \langle\bm{\hat\gamma} D_2\bm v, D_1\bm v\rangle_M) = 0,
    \end{equation*}
    where we use the SBP property and symmetry of the diagonal norm matrix $M$. Thanks to the split form of the $\alpha$-terms, they also cancel:
    \begin{align*}
        &\langle D_1\bm y, g(\bm h + \bm b) - \frac{1}{2}\bm v^2\rangle_M + \frac{1}{2}\langle D_1(\bm v\bm y),\bm v\rangle_M + \frac{1}{2}\langle\bm vD_1\bm y,\bm v\rangle_M + \frac{1}{2}\langle\bm yD_1\bm v,\bm v\rangle_M\\
        &= -g\langle\bm{\hat\alpha}D_1(\bm{\hat\alpha}D_1(\bm h + \bm b)),D_1(\bm h + \bm b)\rangle_M - \frac{1}{2}\langle D_1\bm y,\bm v^2\rangle_M - \frac{1}{2}\langle\bm v\bm y,D_1\bm v\rangle_M\, + \\
        &\quad\;\frac{1}{2}\langle D_1\bm y,\bm v^2\rangle_M + \frac{1}{2}\langle D_1\bm v,\bm v\bm y\rangle_M \\
        &= -g\langle D_1\bm z,\bm z\rangle_M = 0.
    \end{align*}
    In the last step we use \Cref{le:per-SBP-prop} item 2 for $\bm z = \bm{\hat\alpha}D_1(\bm h + \bm b)$.
    Hence, we are left with
    \begin{align*}
        \bm 1^TM\bm U_t &= \langle D_1(\bm{\hat\beta}D_1\bm v_t),\bm v\rangle_M = -\langle\bm{\hat\beta}D_1\bm v_t, D_1\bm v\rangle_M = -\bm 1^TM\bm{\hat\beta}D_1\bm vD_1\bm v_t \\
		&= -\frac{1}{2}\bm 1^TM\bm{\hat\beta}((D_1\bm v)^2)_t,
    \end{align*}
    which is equivalent to
    \begin{equation*}
        \frac{\tr d}{\tr dt}\hat\cE(t; \bm h, \bm P) = \bm 1^TM\bm{\hat U}_t = \bm 1^TM\left(\bm U + \frac{1}{2}\bm{\hat\beta}(D_1\bm v^2)\right)_t = 0,
    \end{equation*}
    i.e., conservation of the total modified entropy.\\
    Since each term in \eqref{eq:semi-SK1}--\eqref{eq:semi-SK2} either contains a factor $\bm v$ or a factor $D_1(\bm h + \bm b)$ consistency of $D_1$ implies well-balancedness of the scheme.
\end{proof}
As an alternative semidiscretization, we can again use upwind operators to avoid using multiple central operators in the second- and third-order derivatives leading to wide-stencil operators. One possibility is the semidiscretization
\begin{subequations}
\begin{align}
    \bm h_t &= -D_1(\bm h\bm v) + D_{1,-}\bm y,\label{eq:semi-up-SK1}\\
    \begin{split}
    (\bm h\bm v)_t &= -\frac{1}{2}\left(D_1(\bm h\bm v^2) + \bm h\bm vD_1\bm v + \bm vD_1(\bm h\bm v)\right) - g\bm hD_1(\bm h + \bm b)\, + \\
    &\quad\;\frac{1}{2}(D_{1,-}(\bm v\bm y) + \bm vD_{1,-}\bm y + \bm yD_{1,+}\bm v) + D_{1,+}(\bm{\hat\beta}D_{1,-}\bm v_t)\, + \\
    &\quad\;\frac{1}{2}D_2(\bm{\hat\gamma}D_1\bm v) + \frac{1}{2}D_1(\bm{\hat\gamma}D_2\bm v),
    \end{split}\label{eq:semi-up-SK2}
\end{align}
\end{subequations}
where
$\bm y = \bm{\hat\alpha} D_{1,-}(\bm{\hat\alpha}D_{1,+}(\bm h + \bm b)) = \bm{\hat\alpha} D_{1,-}(\bm{\hat\alpha}D_{1,+}\bm\eta)$.
\begin{theorem}
	The semidiscretization \eqref{eq:semi-up-SK1}--\eqref{eq:semi-up-SK2} conserves the total mass $\cM$ and total discharge $\cI$ for constant bathymetry and is entropy-stable in terms of the modified entropy \eqref{eq:discrete-total-mod-entropy}, i.e.,
	\begin{equation*}
		\frac{\tr d}{\tr dt}\hat\cE(t; \bm h, \bm P) \le 0.
	\end{equation*}
	Moreover, it is well-balanced.
\end{theorem}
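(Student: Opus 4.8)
The plan is to follow the proof of \Cref{th:semi-props-SK} essentially line by line, replacing the central operators by the upwind pair wherever \eqref{eq:semi-up-SK1}--\eqref{eq:semi-up-SK2} differs from \eqref{eq:semi-SK1}--\eqref{eq:semi-SK2}, and keeping careful track of which of $D_{1,+}$, $D_{1,-}$ appears so that the periodic upwind identity $\langle D_{1,+}\bm u,\bm v\rangle_M = -\langle\bm u,D_{1,-}\bm v\rangle_M$ is applied in the correct direction. Conservation of $\cM$ is immediate from \Cref{le:per-SBP-prop} item~1, which holds for $D_1$ and $D_{1,\pm}$ alike: $\frac{\tr d}{\tr dt}\cM(t;\bm h) = \bm 1^T M\bm h_t = -\bm 1^T M D_1(\bm h\bm v) + \bm 1^T M D_{1,-}\bm y = 0$. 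For constant $\bm b$, conservation of $\cI$ goes through just as in \Cref{th:semi-props-SK}: the shallow water terms in \eqref{eq:semi-up-SK2} still use the central $D_1$ and vanish under $\bm 1^T M$ as before; the divergence-form $\alpha$-, $\beta$- and $\gamma$-terms drop by item~1; and the remaining $\alpha$-pieces $\tfrac{1}{2}(\bm vD_{1,-}\bm y + \bm yD_{1,+}\bm v)$ cancel against each other once the upwind pairing $\langle\bm y,D_{1,+}\bm v\rangle_M = -\langle D_{1,-}\bm y,\bm v\rangle_M$ is used.

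For the entropy estimate I would differentiate the discrete total energy, $\frac{\tr d}{\tr dt}\cE = \langle\bm h_t,g(\bm h+\bm b)-\tfrac{1}{2}\bm v^2\rangle_M + \langle(\bm h\bm v)_t,\bm v\rangle_M$, substitute \eqref{eq:semi-up-SK1}--\eqref{eq:semi-up-SK2}, and treat the four blocks in turn. The shallow water split-form terms and the $\gamma$-terms cancel verbatim as in \Cref{th:semi-props-SK}, since these only involve the central $D_1$, $D_2$ and the symmetry of the diagonal $M$. In the $\alpha$-block the pieces proportional to $\bm v^2$ still cancel and the product-rule-mimicking pieces still telescope; the only change is that one use of $\langle D_{1,-}\bm a,\bm c\rangle_M = -\langle\bm a,D_{1,+}\bm c\rangle_M$ now leaves the residual $-g\langle D_{1,-}\bm z,\bm z\rangle_M$ with $\bm z = \bm{\hat\alpha}D_{1,+}(\bm h+\bm b)$, which is $\le 0$ by \Cref{le:per-SBP-prop} item~3 instead of being exactly zero. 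In the $\beta$-block, $\langle D_{1,+}(\bm{\hat\beta}D_{1,-}\bm v_t),\bm v\rangle_M = -\langle\bm{\hat\beta}D_{1,-}\bm v_t,D_{1,-}\bm v\rangle_M = -\tfrac{1}{2}\frac{\tr d}{\tr dt}\bigl(\bm 1^T M\bm{\hat\beta}(D_{1,-}\bm v)^2\bigr)$. Collecting these contributions gives $\frac{\tr d}{\tr dt}\bigl(\cE + \tfrac{1}{2}\bm 1^T M\bm{\hat\beta}(D_{1,-}\bm v)^2\bigr) = -g\langle D_{1,-}\bm z,\bm z\rangle_M \le 0$, i.e.\ the total modified entropy \eqref{eq:discrete-total-mod-entropy} is non-increasing, with the understanding that in the upwind scheme the term $D_1\bm v$ in \eqref{eq:discrete-total-mod-entropy} is to be read as $D_{1,-}\bm v$, which is the discretization of $v_x$ naturally associated with \eqref{eq:semi-up-SK2}. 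Well-balancedness follows as in \Cref{th:semi-props-SK}: every term of \eqref{eq:semi-up-SK1}--\eqref{eq:semi-up-SK2} carries a factor $\bm v$ or a factor $D_{1,\pm}(\bm h+\bm b) = D_{1,\pm}\bm\eta$, and consistency of $D_1$, $D_2$, $D_{1,\pm}$ kills these at the lake at rest $\bm v=\bm 0$, $\bm\eta=\mathrm{const}$ (note $\bm y = \bm 0$ there since $D_{1,+}\bm\eta = \bm 0$), hence $\bm h_t = \bm v_t = \bm 0$.

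The step I would be most careful about is the orientation bookkeeping for the upwind operators: the split forms in \eqref{eq:semi-up-SK2} are arranged precisely so that each discrete integration by parts trades a $D_{1,-}$ for a $D_{1,+}$ in accordance with $MD_{1,+}+D_{1,-}^T M = 0$, and one must verify that the surviving term is $\langle D_{1,-}\bm z,\bm z\rangle_M$ (non-negative, hence dissipative with its sign) rather than $\langle D_{1,+}\bm z,\bm z\rangle_M$ or a mismatched pairing with no definite sign; the same care is needed to recognize that it is $D_{1,-}\bm v$ and not $D_1\bm v$ that enters the dissipated discrete modified entropy. Everything else is a direct transcription of the proof of \Cref{th:semi-props-SK}.
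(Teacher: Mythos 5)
Your proposal is correct and follows essentially the same route as the paper, which simply refers back to the proof of \Cref{th:semi-props-SK} and notes that the $\alpha$-terms now leave the residual $-g\langle D_{1,-}\bm z,\bm z\rangle_M \le 0$ with $\bm z = \bm{\hat\alpha}D_{1,+}(\bm h+\bm b)$ by \Cref{le:per-SBP-prop} item~3. Your additional remark that the dissipated discrete modified entropy naturally involves $D_{1,-}\bm v$ rather than $D_1\bm v$ is a correct refinement of a point the paper leaves implicit.
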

\begin{proof}
	The proof can be conducted in the same way as before in \Cref{th:semi-props-SK}, but now we obtain $-\langle D_{1,-}\bm z,\bm z\rangle\le 0$ for $\bm z = \bm{\hat\alpha}D_{1,+}(\bm h + \bm b)$ from the $\alpha$-terms, yielding the dissipation according to \Cref{le:per-SBP-prop} item 3. Note that the third-derivative operators in the $\alpha$-terms of this semidiscretization are biased towards the negative direction, which leads to the dissipation.
\end{proof}
Similarly, a conservative semidiscretization using upwind operators can be found.

Compared to the discretization given in \cite{svard2023novel}, the presented semidiscretizations have some advantages. First, one can easily obtain schemes of arbitrary order as we just need to pick SBP operators of appropriate order. Second, the discretization follows closely the continuous equations, so theoretical results of the equations transfer to the semidiscretization in a straightforward manner.
Especially, entropy stability can be obtained without the need to introduce artificial diffusion as it is done in \cite{svard2023novel}. Last, the general framework of SBP operators allows different types of methods, and we can use any time-stepping scheme to solve the semidiscrete system. Especially combined with relaxation methods, as described in the following section, fully-discrete entropy conservation can be ensured.

In the case of $\tilde\alpha = \tilde\gamma = 0$, we consider reflecting boundary conditions,
where we have $v = 0$ at the boundary of the domain. We only consider this case as for general
$\tilde\alpha$ and $\tilde\gamma$, the equations do not satisfy an entropy bound. Note that
the additional condition $\eta_x = 0$ on the boundary is obsolete as there are no higher order
derivatives in the mass equation. Under these conditions, we can formulate the semidiscretization
\begin{subequations}
	\begin{align}
		\bm h_t &= -D_1(\bm h\bm v),\label{eq:semi-SK1-reflecting}\\
		\begin{split}
			(\bm h\bm v)_t &= -\frac{1}{2}\left(D_1(\bm h\bm v^2) + \bm h\bm vD_1\bm v + \bm vD_1(\bm h\bm v)\right) - g\bm hD_1(\bm h + \bm b)\, + \\
			&\quad\; P_DD_1(\bm{\hat\beta}D_1\bm v_t),
		\end{split}\label{eq:semi-SK2-reflecting}
	\end{align}
\end{subequations}
where, again, $P_D = \diag(0, 1, \ldots, 1, 0)$. We explicitly set $v_1 = v_N = 0$ to
satisfy the Dirichlet boundary conditions strongly.

\begin{theorem}
	The semidiscretization \eqref{eq:semi-SK1-reflecting}--\eqref{eq:semi-SK2-reflecting}
	conserves the total mass $\cM$, conserves the total modified entropy
	\eqref{eq:discrete-total-mod-entropy}, and is well-balanced.
\end{theorem}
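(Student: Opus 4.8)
The plan is to mirror the proof of \Cref{th:semi-props-SK}, specializing to $\tilde\alpha = \tilde\gamma = 0$ (so that $\bm{\hat\alpha} = \bm{\hat\gamma} = \bm 0$, the shorthand $\bm y$ vanishes, and all $\gamma$-terms drop out) and replacing every appeal to the periodic identities of \Cref{le:per-SBP-prop}, which no longer apply, by the genuine SBP relation $MD_1 + D_1^TM = \bm e_R\bm e_R^T - \bm e_L\bm e_L^T$. The guiding observation is that every boundary term produced by this discrete integration by parts carries a factor $v_1$ or $v_N$ (or $h_1v_1$, $h_1v_1^2$, \dots) and therefore vanishes because $v_1 = v_N = 0$ is imposed strongly; equivalently $P_D\bm v = \bm v$ throughout.

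First I would record that $\bm 1^TMD_1 = \bm e_R^T - \bm e_L^T$ by consistency of $D_1$, which gives mass conservation immediately: $\frac{\mr d}{\mr dt}\cM(t;\bm h) = \bm 1^TM\bm h_t = -\bm 1^TMD_1(\bm h\bm v) = -(h_Nv_N - h_1v_1) = 0$. Well-balancedness follows as in the earlier theorems: at the lake at rest $\bm v = \bm 0$ and $\bm h + \bm b$ is constant, so $D_1(\bm h + \bm b) = \bm 0$ by consistency, every term on the right-hand side of \eqref{eq:semi-SK1-reflecting}--\eqref{eq:semi-SK2-reflecting} carrying a factor $\bm v$ vanishes, and the (symmetric) positive definiteness of $M(\bm h - P_DD_1\bm{\hat\beta}D_1)$ on $\{v_1 = v_N = 0\}$ — the same estimate that supplies the invertibility needed to evolve $\bm v$ — forces $\bm v_t = \bm 0$, so the steady state is preserved exactly.

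The core is the modified-entropy balance. I would start from the discrete chain rule $\frac{\mr d}{\mr dt}\cE(t;\bm h,\bm P) = \langle \bm h_t, g(\bm h + \bm b) - \tfrac12\bm v^2\rangle_M + \langle(\bm h\bm v)_t, \bm v\rangle_M$ (the endpoint components of $(\bm h\bm v)_t$ are irrelevant, being multiplied by $v_1 = v_N = 0$), substitute \eqref{eq:semi-SK1-reflecting}--\eqref{eq:semi-SK2-reflecting}, and cancel the shallow-water split-form contributions exactly as in \Cref{th:semi-props-SK}: the $g$-terms combine into $-g\big(\langle D_1(\bm h\bm v), \bm h + \bm b\rangle_M + \langle \bm h\bm v, D_1(\bm h+\bm b)\rangle_M\big) = -g\big(h_Nv_N(h_N+b_N) - h_1v_1(h_1+b_1)\big) = 0$, the two terms pairing $D_1(\bm h\bm v)$ with $\bm v^2$ cancel directly, and $-\tfrac12\big(\langle D_1(\bm h\bm v^2), \bm v\rangle_M + \langle \bm h\bm v^2, D_1\bm v\rangle_M\big) = -\tfrac12\big(h_Nv_N^3 - h_1v_1^3\big) = 0$, all boundary terms vanishing thanks to $v_1 = v_N = 0$. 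What remains is the $\beta$-term, and using $P_D\bm v = \bm v$, the SBP property (with endpoint terms $v_N(\bm{\hat\beta}D_1\bm v_t)_N - v_1(\bm{\hat\beta}D_1\bm v_t)_1 = 0$), and the fact that $M$, $\bm{\hat\beta}$ are diagonal and time-independent,
\begin{equation*}
	\langle P_DD_1(\bm{\hat\beta}D_1\bm v_t), \bm v\rangle_M = \langle D_1(\bm{\hat\beta}D_1\bm v_t), \bm v\rangle_M = -\langle \bm{\hat\beta}D_1\bm v_t, D_1\bm v\rangle_M = -\tfrac12\frac{\mr d}{\mr dt}\bm 1^TM\bm{\hat\beta}(D_1\bm v)^2.
\end{equation*}
Hence $\frac{\mr d}{\mr dt}\cE = -\tfrac12\frac{\mr d}{\mr dt}\bm 1^TM\bm{\hat\beta}(D_1\bm v)^2$, i.e.\ $\frac{\mr d}{\mr dt}\hat\cE(t;\bm h,\bm P) = \frac{\mr d}{\mr dt}\big(\cE + \tfrac12\bm 1^TM\bm{\hat\beta}(D_1\bm v)^2\big) = 0$, which is the claimed conservation of the total modified entropy \eqref{eq:discrete-total-mod-entropy}.

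The main obstacle is bookkeeping rather than a genuine difficulty: since \Cref{le:per-SBP-prop} is unavailable, one must carry the endpoint terms through every discrete integration by parts in the shallow-water cancellation and in the $\beta$-term and verify that each is proportional to $v_1$ or $v_N$, and one must treat the projector $P_D$ with care — it can be absorbed when contracted against $\bm v$, but it is essential both for keeping $v_1, v_N$ frozen and for rendering the operator acting on $\bm v_t$ symmetric positive definite on the constrained subspace. Everything else is a routine repetition of \Cref{th:semi-props-SK} with $\tilde\alpha = \tilde\gamma = 0$.
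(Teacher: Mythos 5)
Your proposal is correct and follows exactly the route the paper intends: it repeats the argument of \Cref{th:semi-props-SK} with $\tilde\alpha = \tilde\gamma = 0$, using the non-periodic SBP relation in place of \Cref{le:per-SBP-prop} and observing that every resulting boundary term carries a factor $v_1$ or $v_N$ and hence vanishes (together with $P_D\bm v = \bm v$). The paper's own proof is just a one-line reference to that earlier theorem plus the remarks $P_D\bm v = \bm v$ and $\bm e_{L/R}^T\bm v = \bm 0$; your write-up supplies the bookkeeping it leaves implicit.
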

\begin{proof}
	The proof can be obtained in the same way as in \Cref{th:semi-props-SK}, where for
	the entropy condition we note that $P_D\bm v = \bm v$ and $\bm{e}_{L/R}^T\bm v = 0$.
\end{proof}
An entropy-conservative upwind discretization can be obtained by replacing
$P_DD_1(\bm{\hat\beta}D_1\bm v_t)$ by $P_DD_{1,+}(\bm{\hat\beta}D_{1,-}\bm v_t)$.

\subsection{Relaxation method}\label{sect:time-integration}

The semidiscretizations from the previous sections are given as
ordinary differential equations (ODEs)
that conserve ($\frac{\tr d}{\tr dt}J(u) = 0$)
or dissipate ($\frac{\tr d}{\tr dt}J(u) \le 0$)
some nonlinear, convex, and differentiable functional $J(u)$ for all
solutions $u$, where we identify $J$ as the
total energy $\cE$ or modified entropy $\hat\cE$.
To preserve the energy conservation or dissipation for the
fully-discrete scheme, we apply relaxation methods
\cite{ketcheson2019relaxation,ranocha2020relaxation,ranocha2020general},
resulting in \emph{conservative} time-stepping schemes satisfying
$J(u^{n + 1}) = J(u^n) = J(u^0)$ and \emph{dissipative}
(also known as \emph{monotone} or \emph{strongly stable}
\cite[Definition 2.2]{ranocha2021strong}) methods satisfying
$J(u^{n + 1}) \le J(u^n)$.
In our simulations we take explicit Runge-Kutta methods as baseline
one-step methods, in which case the relaxation method is also
referred to as \emph{relaxation Runge-Kutta} (RRK) method,
cf.\ \cite[Section 2.2]{ketcheson2019relaxation}.

\section{Results}\label{sect:results}
The numerical methods are implemented in the Julia \cite{bezanson2015julia} package DispersiveShallowWater.jl \cite{lampert2023dispersive}, which is partly based on the numerical simulation framework Trixi.jl \cite{ranocha2022adaptive,schlottkelakemper2020trixi,schlottkelakemper2021purely}.
The SBP operators are taken from SummationByPartsOperators.jl \cite{ranocha2021sbp}. Time integration is performed using OrdinaryDiffEq.jl \cite{rackauckas2017differentialequations}.

The adaptive explicit Runge-Kutta method by Tsitouras \cite{tsitouras2011rungekutta} is used for the integration in time (\texttt{Tsit5} in OrdinaryDiffEq.jl). If not stated otherwise, we use error-based adaptive step size control with absolute and relative tolerances of $10^{-14}$ for the convergence tests to limit the error to the spatial discretization and tolerances of $10^{-7}$ for the remaining experiments.

The source code to reproduce the figures presented in this paper can be found online \cite{lampert2024structureRepro}.

\subsection{Solitary wave solution of the BBM-BBM equations}
We use the solution \eqref{eq:BBMBBM-soliton1}--\eqref{eq:BBMBBM-soliton2} of the BBM-BBM equations with periodic
boundary conditions, a constant still water depth $D \equiv 2$, and $g = 9.81$ to test the convergence properties of the scheme.
The initial condition follows from \eqref{eq:BBMBBM-soliton1}--\eqref{eq:BBMBBM-soliton2} by taking $t = 0$. We use the
semidiscretization \eqref{eq:semi-BBM-BBM-const1}--\eqref{eq:semi-BBM-BBM-const2} to compare the results obtained using the RRK method
in time with the baseline method without relaxation. The $L^2$-errors of the solutions at $t = 10$ for $N$
points in space (degrees of freedom, DOFs) and different orders of accuracy $p$ are depicted in \Cref{fig:order-soliton} for narrow-stencil finite difference
SBP operators.

\begin{figure}[htbp]
    \centering
    \includegraphics[width=0.9\textwidth]{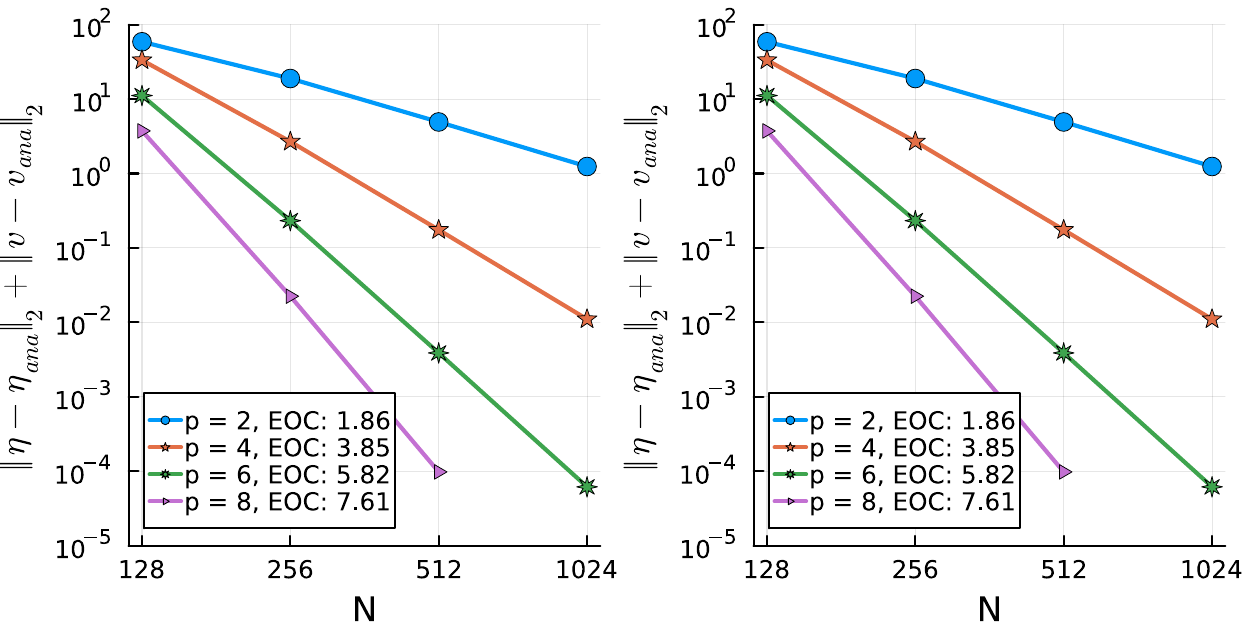}
    \caption{Experimental order of convergence for the solitary wave solution of the BBM-BBM equation, left: baseline without relaxation, right: with relaxation}
    \label{fig:order-soliton}
\end{figure}

With both approaches, the observed spatial experimental order of convergence (EOC) approximately corresponds to the order of accuracy of the SBP operators. A more extensive convergence study, also considering different types of solvers, is presented in \cite{ranocha2021broad}.

\begin{figure}[htbp]
    \centering
    \includegraphics[width=0.9\textwidth]{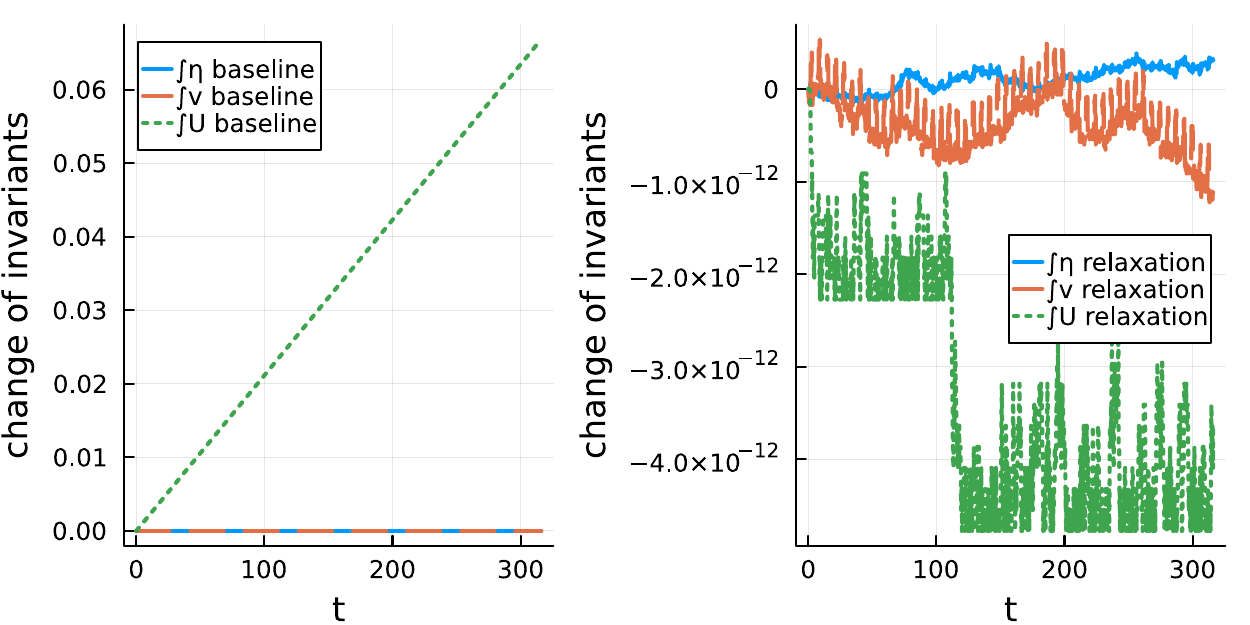}
    \caption{Conservation of linear and nonlinear invariants for the solitary wave solution of the BBM-BBM equations, left: baseline without relaxation, right: with relaxation}
    \label{fig:invariants}
\end{figure}

The spatial error dominates in \Cref{fig:order-soliton}, such that both methods exhibit similar errors. Therefore, to further analyze the impact of the relaxation method, we solve the same setup using SBP operators with order of accuracy $p = 8$ and $N = 512$ points in space (DOFs), but run the simulation up to a final time of $t \approx 316 $, which corresponds to 50 periods.
\Cref{fig:invariants} shows the temporal development of the change of the linear invariants and the nonlinear invariant $\cE$.
In particular, it can be seen that the total energy grows linearly without using relaxation, whereas it is constant up to machine precision when the RRK method is used. Both methods conserve the total mass $\cM$ exactly, and the total velocity $\cV$ up to $11$ digits. That the relaxation method can lead to an improved approximation is also demonstrated by the error growth, which is plotted up to $t = 100$ in \Cref{fig:errors}.
It shows that the relaxation solution is able to track the analytical solution accurately for a much longer time period than the baseline method.

\begin{figure}[htbp]
    \centering
    \includegraphics[width=0.9\textwidth]{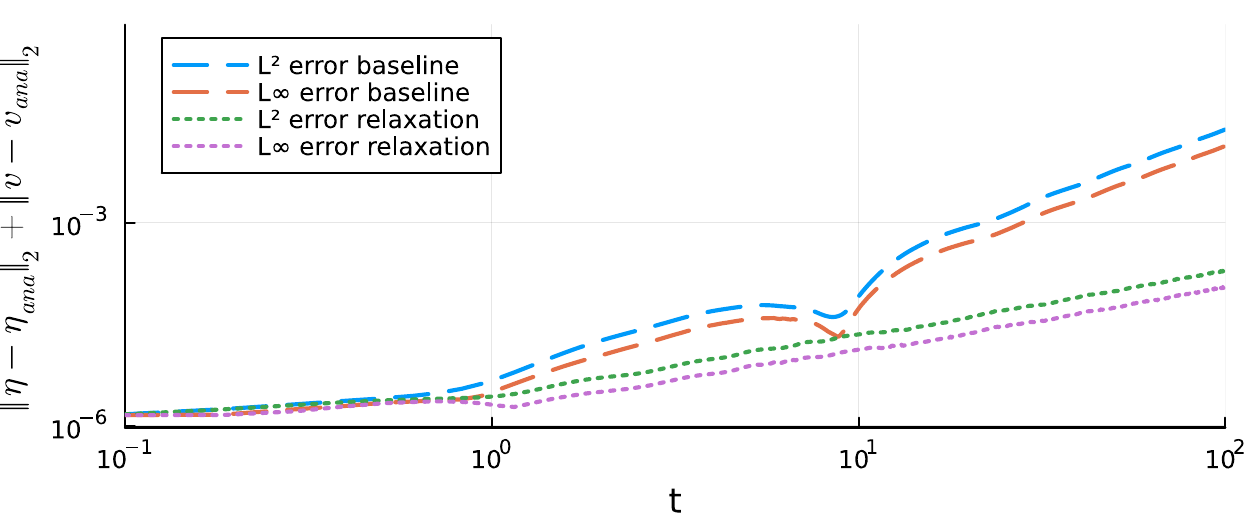}
    \caption{Evolution of $L^2$- and $L^\infty$-errors for baseline and relaxation method applied to the solitary wave solution of the BBM-BBM equations}
    \label{fig:errors}
\end{figure}

The time step chosen by the adaptive Runge-Kutta method is approximately $10^{-2}$ and independent of the order of accuracy and number of grid points. The relaxation parameter $\gamma$ is of the size $1 + 10^{-8}$ over the whole time span,
i.e., the relaxation method yields slightly bigger time steps, which, in turn,
leads to a lower number of total time steps. Note, however, that in the case of error-based step size control
with a first-same-as-last (FSAL) method, the right-hand side of the relaxed solution needs to be
re-evaluated at the beginning of the next time step. A remedy for this problem is proposed in \cite{bleecke2023step}.

\begin{figure}[htbp]
    \centering
    \includegraphics[width=0.9\textwidth]{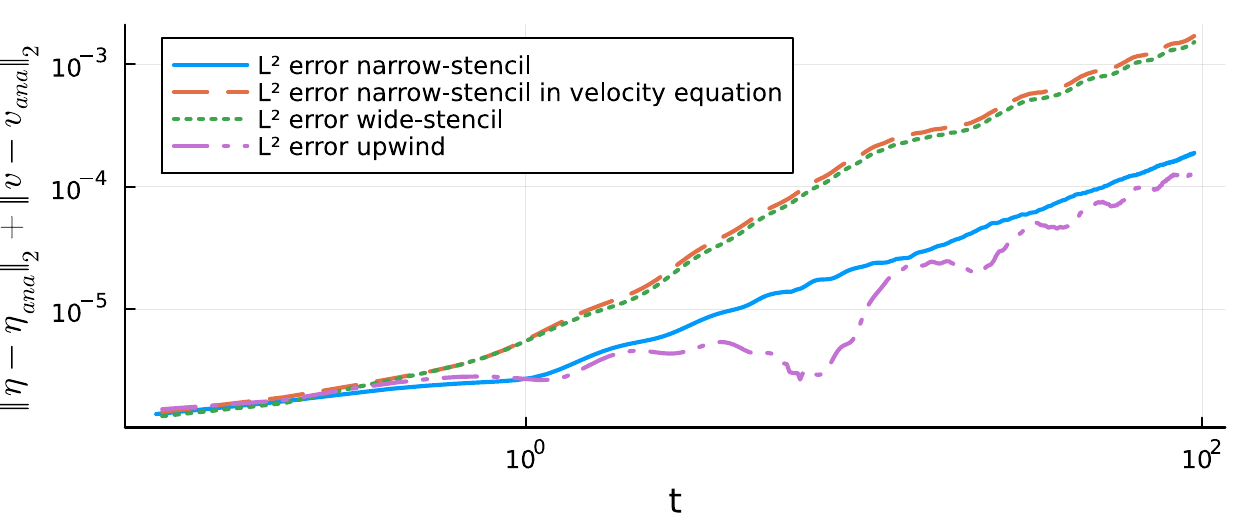}
    \caption{Evolution of $L^2$-errors for different stencils in finite difference semidiscretizations of the BBM-BBM equations applied to
            the solitary wave solution with constant bathymetry}
    \label{fig:errors-stencils}
\end{figure}

Next, we compare the semidiscretizations presented for the BBM-BBM equations with variable bathymetry. Specifically, we look at three semidiscretizations: First, \eqref{eq:semi-BBM-BBM1}--\eqref{eq:semi-BBM-BBM2} with narrow-stencil second-derivative operator in the equation \eqref{eq:semi-BBM-BBM2} for the velocity. Second, the semidiscretization \eqref{eq:semi-BBM-BBM1}--\eqref{eq:semi-BBM-BBM2} with wide-stencil operator $D_2 = D_1^2$ and third, the upwind semidiscretization \eqref{eq:semi-up-BBM-BBM1}--\eqref{eq:semi-up-BBM-BBM2}. The first is not energy-conservative, while the second and the third are energy-conserving. As baseline method, we take again the semidiscretization \eqref{eq:semi-BBM-BBM-const1}--\eqref{eq:semi-BBM-BBM-const2}, which uses narrow-stencil operators in both equations. The growth of the $L^2$-error for a time span of 15 periods is presented in \Cref{fig:errors-stencils}, again using an order of accuracy of 8.
Clearly, using one or two wide-stencil operators leads to significantly larger errors, however the energy-conservative semidiscretization that uses the wide-stencil operator $D_2 = D_1^2$ in both equations \eqref{eq:semi-BBM-BBM1}--\eqref{eq:semi-BBM-BBM2} is slightly more accurate than the semidiscretization that uses a narrow-stencil operator in the second equation. Upwind operators yield the smallest error, which demonstrates that we do not lose accuracy using the general scheme designed for the BBM-BBM equations with variable bathymetry compared to the scheme designed for the BBM-BBM equations with constant bathymetry.

To sum up, these experiments demonstrate that the presented semidiscretization of the BBM-BBM equations with constant bathymetry is able to capture the solitary wave. The linear and nonlinear invariants are conserved as expected from the analysis. Furthermore, it is exemplified that a fully-discrete energy-conserving numerical scheme can stabilize the method and can lead to higher accuracy of the numerical solution. Finally, the energy-conservative upwind semidiscretization for the general BBM-BBM equations with (possibly) variable bathymetry is able to produce similarly accurate results as the semidiscretization for the constant case.

\subsection{Convergence test}
Since there are no known analytical solutions for the BBM-BBM equations with variable bathymetry or the equations of Svärd and Kalisch, we construct manufactured solutions to validate the schemes. For both sets of equations we use the solution
\begin{equation*}
	\eta(t, x) = \mr e^t\cos(2\pi(x - 2t)), \quad v(t, x) = \mr e^{t/2}\sin(2\pi(x - t/2)),
\end{equation*}
with $x\in[0,1], t\in[0,1]$ in a periodic domain, and the bathymetry
\begin{equation*}
	b(x) = -5 - 2\cos(2\pi x)
\end{equation*}
to construct source terms for the equations.

\begin{figure}[htbp]
    \centering
	\includegraphics[width=\textwidth]{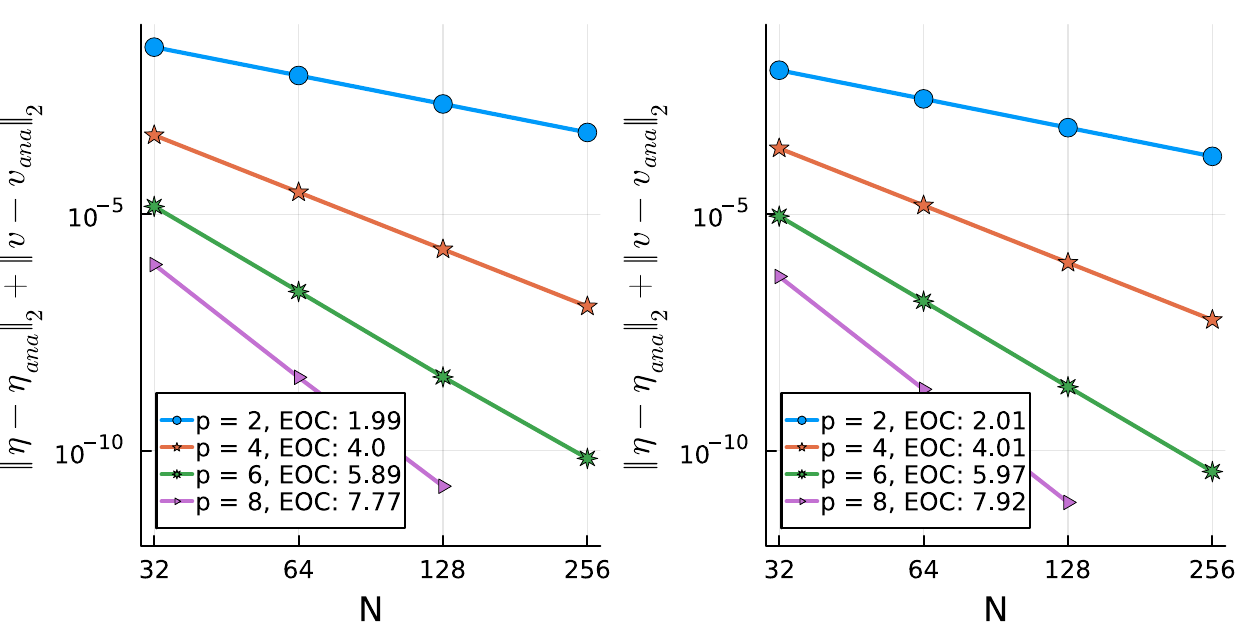}
	\caption{Experimental order of convergence for manufactured solution with variable bathymetry (left: BBM-BBM equations, right: Svärd-Kalisch equations)}
	\label{fig:order-manufactured}
\end{figure}

\Cref{fig:order-manufactured} shows the EOC for the schemes
with upwind operators and periodic boundary conditions for the two equations. Again, the
EOC matches the order of accuracy of the involved SBP operators.
Similar results are also obtained for the corresponding wide-stencil semidiscretizations.

\begin{figure}[htbp]
	\centering
	\includegraphics[width=0.95\textwidth]{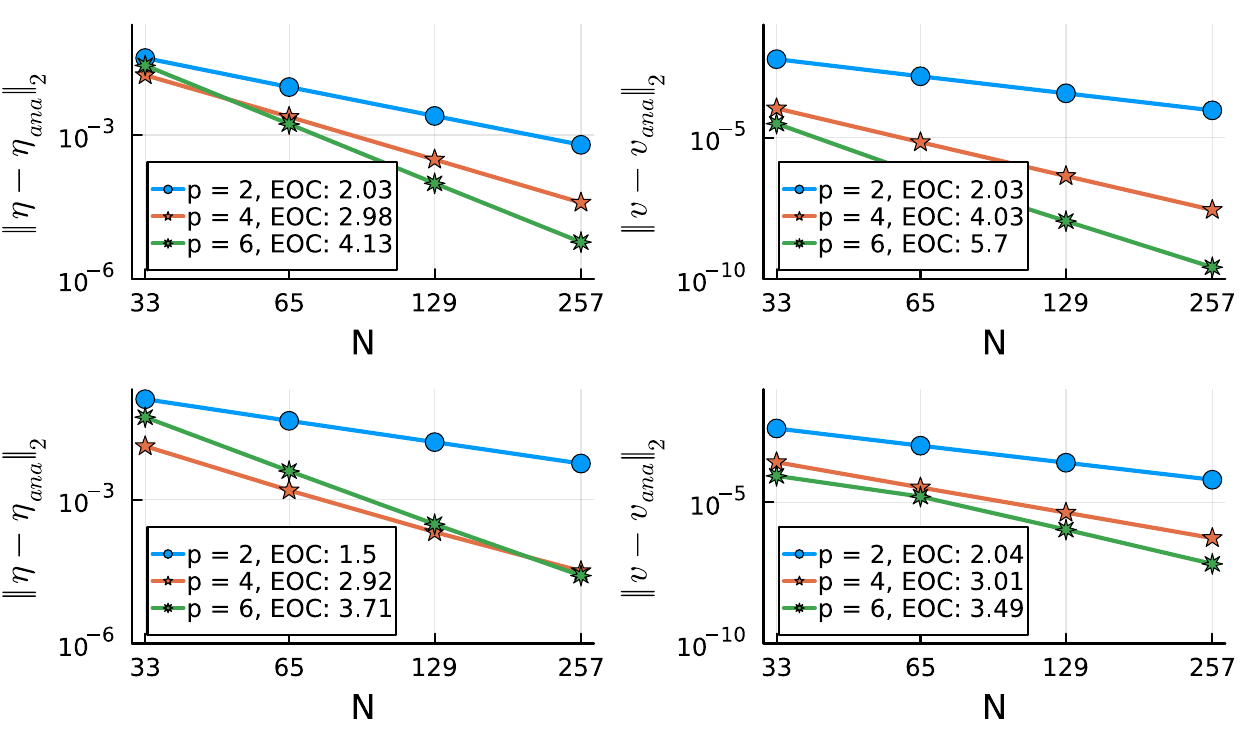}
	\caption{Experimental order of convergence for manufactured solution with reflecting boundary conditions (left: convergence in $\eta$, right: convergence in $v$, upper: BBM-BBM equations, lower: Svärd-Kalisch equations)}
	\label{fig:order-manufactured-reflecting}
\end{figure}

To verify the semidiscretizations with solid wall boundary conditions, we use the manufactured solution
\begin{equation*}
	\eta(t, x) = \mr e^{2t}\cos(\pi x), \quad v(t, x) = \mr e^tx\sin(\pi x),
\end{equation*}
and use the same bathymetry function as above. The $L^2$-errors for both $\eta$ and $v$ and both equations are depicted in
\Cref{fig:order-manufactured-reflecting}, where we use the SBP operators from \cite{mattsson2004summation}. Due to the
reduced order at the boundaries of the non-periodic SBP operators, we also observe reduced experimental orders of convergence
for $p\in\{4, 6\}$. Additionally, as already found in \cite{ranocha2021broad}, the order depends on the parity of the number
of nodes $N$. Choosing an even number of discretization points reduces the accuracy significantly.

\subsection{Well-balanced verification}
To verify the well-balanced property of our schemes for both the BBM-BBM equations and the Svärd-Kalisch equations, we solve a lake-at-rest test case with discontinuous bathymetry
\begin{equation*}
	b(x) = \begin{cases}
		1.5 + 0.5\sin(2\pi x), &\text{ for } 0.5 \le x \le 0.75,\\
		1.0, &\text{ else},
	\end{cases}
\end{equation*}
which has a jump of height $0.5$ at $x = 0.5$.
We consider the domain $[-1, 1]$ and set the initial conditions as $\eta(0, x) = 2$ and $v(0, x) = 0$.
We use 200 degrees of freedom in space, central SBP operators of orders of 2, 4, and 6 and
let the simulation run up to a time of $t = 10$. Here, we take a fixed time step for the time integration. Due to the high stiffness of
the Svärd-Kalisch equations we take a rather small time step of $\Delta t = 2\cdot 10^{-4}$, while for the BBM-BBM equations a bigger step
size of $\Delta t = 0.5$ suffices. The $L^2$-errors for periodic boundary conditions can be found in \Cref{table:well-balanced-errors} and
are of the order of machine precision. Similar results are obtained with reflecting boundary conditions.
We note that the exact preservation of steady-state solutions is achieved by using the formulation in primitive variables $\eta$ and $v$.

\begin{table}[!htb]
	\caption{$L^2$-error for lake-at-rest test case for different orders of accuracy $p$}
	\label{table:well-balanced-errors}
	\centering
	\begin{tabular}{ccc @{\hskip 5em} ccc}
		\toprule
		\multicolumn{3}{c}{BBM-BBM equations} & \multicolumn{3}{c}{Svärd-Kalisch equations} \\
		$p$ & $\eta$ & v & $p$ & $\eta$ & v \\\midrule
		2 & 0.00e+00 & 0.00e+00 & 2 & 0.00e+00 & 0.00e+00 \\
		4 & 3.95e-15 & 8.62e-15 & 4 & 5.28e-14 & 3.22e-15 \\
		6 & 9.00e-15 & 3.04e-15 & 6 & 3.96e-14 & 8.38e-15 \\\bottomrule
	\end{tabular}
\end{table}

\subsection{Conservation for reflecting boundary conditions}

To verify the conservation properties of the presented semidiscretizations with reflecting wall boundary conditions, we choose an initial condition with a bump in the water height, initially still water, and a varying bathymetry, i.e.,
\begin{equation*}
	\eta(t = 0, x) = 1 + \mr{e}^{-50x^2}, \quad
	v(t = 0, x) = 0, \quad
	b(x) = 0.3 \cos(\pi x).
\end{equation*}
We use $N = 512$ points in the domain $[-1, 1]$ and integrate up to a time $t = 1$. The initial bump splits into one left-traveling and one right-traveling wave that interact with the boundary at $t \approx 0.25$. Again, we compare the conservation of the invariants $\cM$ and $\cE$ with and without using the RRK, cf. \Cref{fig:invariants-reflecting-BBM-BBM} for the BBM-BBM equations and \Cref{fig:invariants-reflecting-SK} for the Svärd-Kalisch equations.

\begin{figure}[htbp]
	\centering
	\includegraphics[width=0.9\textwidth]{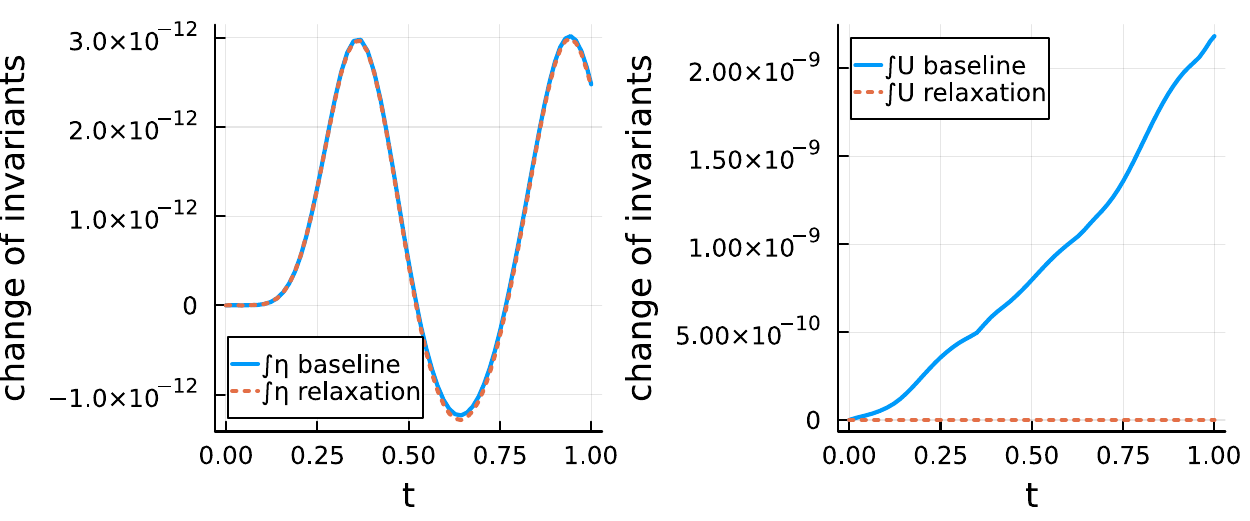}
	\caption{Conservation of linear and nonlinear invariants for BBM-BBM equations with reflecting boundary conditions with and without relaxation, left: conservation of $\cM$, right: conservation of $\cE$}
	\label{fig:invariants-reflecting-BBM-BBM}
\end{figure}

\begin{figure}[htbp]
	\centering
	\includegraphics[width=0.9\textwidth]{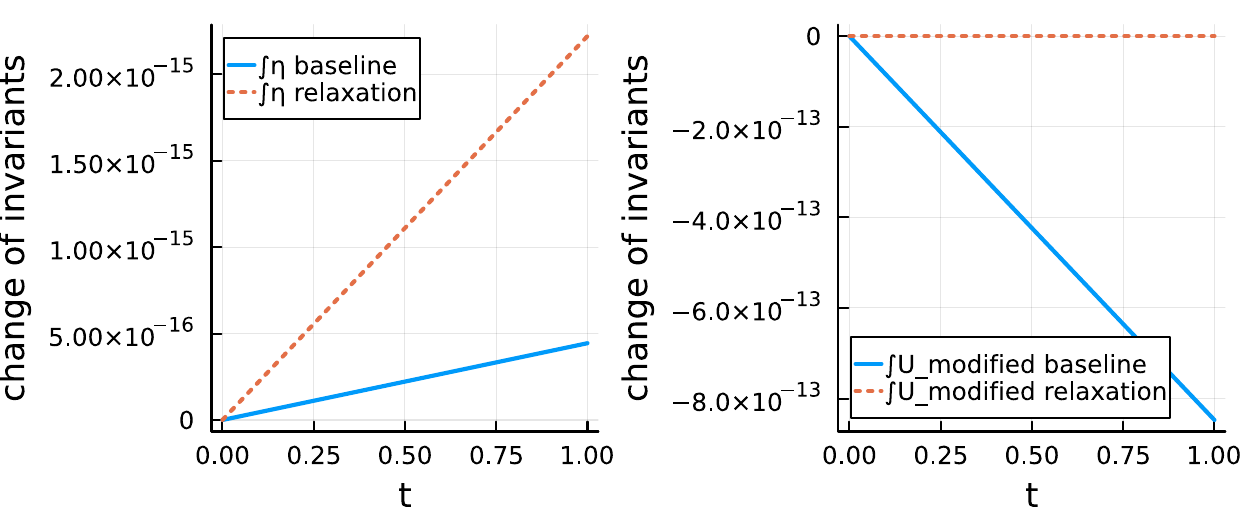}
	\caption{Conservation of linear and nonlinear invariants for Svärd-Kalisch equations with reflecting boundary conditions with and without relaxation, left: conservation of $\cM$, right: conservation of $\cE$}
	\label{fig:invariants-reflecting-SK}
\end{figure}

The baseline method already conserves the total energy up to 9 respectively 13 digits. The small gain in the total energy is, again, due to the time integration, which is not energy-conserving. Applying the relaxation Runge-Kutta method a fully-discrete energy-conserving method is obtained. Both versions conserve the total mass up to machine precision.

\subsection{Traveling wave}
To compare the different dispersive behaviors of the equations, we simulate a traveling wave solution of the Euler equations for different wavenumbers $k$ over a flat bottom with periodic boundaries. To this end, we use the dispersion relation of the Euler equations \eqref{eq:disp-rel-Euler}, similarly to \cite{svard2025novel}, and set
\begin{align*}
	\eta(t = 0, x) &= \eta_0 + A\cos(kx), &\quad
	v(t = 0, x) &= \sqrt{\frac{g}{k}\tanh(kh_0)}\frac{\eta - \eta_0}{h_0},
\end{align*}
where $\omega = \sqrt{gk\tanh(kh_0)}$. We take $\eta_0 = 0$, $h_0 = 0.8$, $D = h_0$, $g = 9.81$, $A = 0.02$, and $k\in\{0.8, 5, 15\}$.
\begin{figure}[ht]
	\centering
	\includegraphics[width=0.9\textwidth]{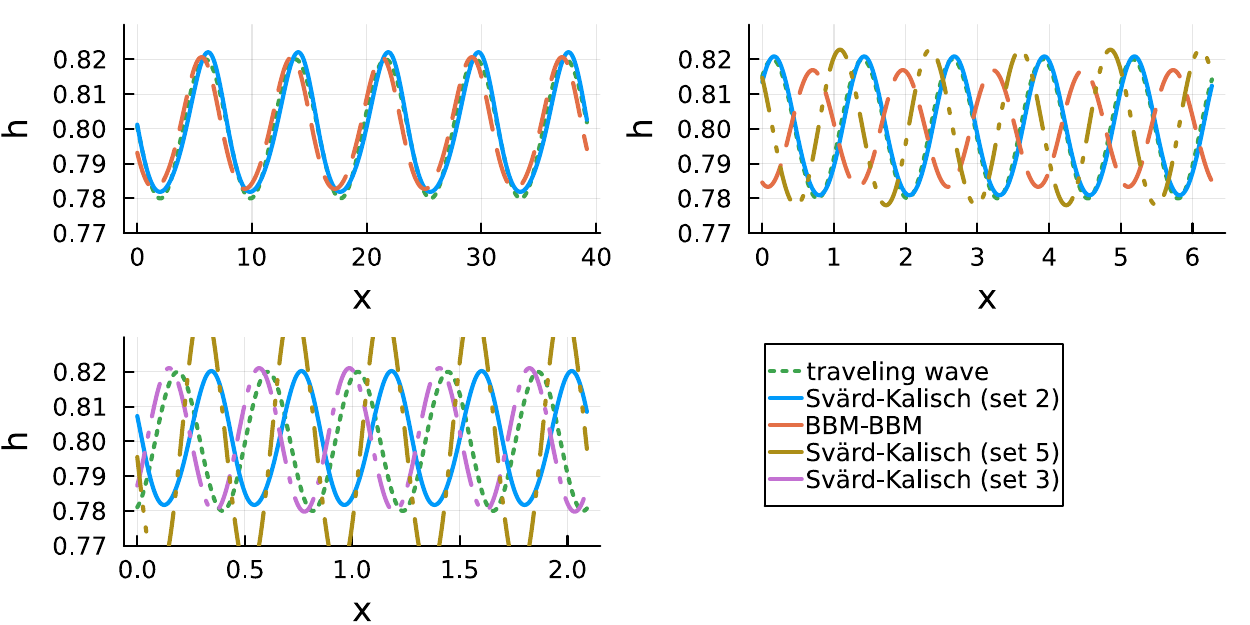}
	\caption{Water height for different models for $k = 0.8$ and $t = 50$ (top left), $k = 5$ and $t = 1$ (top right), and $k = 15$ and $t = 0.75$ (lower left)}
	\label{fig:waterheight-traveling-wave}
\end{figure}
Since the initial condition is chosen to be a traveling wave for the linearized Euler equations, the reference solution is given by
\begin{equation*}
	\eta(t, x) = \eta_0 + A\cos(kx - \omega t).
\end{equation*}
The periodic domain is taken such that $5$ waves are modeled. We use central finite difference SBP operators with $N = 512$ nodes in space. For $k = 0.8$, we expect from \Cref{fig:disp-rels} that both the BBM-BBM and Svärd-Kalisch equations (especially set~2) approximate the wave speed fairly well. The top left subplot of \Cref{fig:waterheight-traveling-wave} verifies that the phase error for both models is indeed quite small, even for $t = 50$. As expected from the dispersion relations, the phase velocity is almost exact for set~2 of the Svärd-Kalisch equations and slightly too small for the BBM-BBM equations ($c_\text{Euler}\approx 2.6319$, $c_\text{BBM-BBM}\approx 2.6224$, and $c_\text{set~2}\approx 2.6316$). In addition, we can see a small error in the amplitude of the solution to the Svärd-Kalisch equations.

Taking waves with much smaller wavelengths, however, leads to a significant phase (and amplitude) error of the BBM-BBM equations already at $t = 1$ as can be seen in the top right part of \Cref{fig:waterheight-traveling-wave} for $k = 5$. At the same time, set~2 of the Svärd-Kalisch equations is still able to accurately capture the speed and also amplitude of the waves. Set~5 lies in between set~2 and the BBM-BBM equations. This property is explained by the improved dispersive behavior of set~2 for the Svärd-Kalisch equations for this range of wavenumbers, see \Cref{fig:disp-rels}.
When we take an even higher wavenumber, $k = 15$, the dispersion relation of set~2 deviates from the one of the Euler equations significantly, but set~3 is still able to describe the dispersive behavior reasonably well. The errors for set~5 become even larger. This is demonstrated in the lower left subplot of \Cref{fig:waterheight-traveling-wave}.

\subsection{Comparison with experimental data}

To validate the discretizations of the different models, we compare their behavior for a setup with experimental data \cite{dingemans1994comparison,dingemans1997water}. The experiment of Dingemans \cite{dingemans1994comparison} used a wave maker situated at $x = 0$ to produce waves with amplitude $A = 0.02$ that pass a trapezoidal bathymetry. The bottom is flat up to $x = 11.01$, then increases linearly up to $x = 23.04$ with a maximum height of 0.6 and finally decreases again linearly from $x = 27.04$ to $x = 33.07$, see also \cite{beji1993experimental}. To simulate the waves generated by the wave maker, we use the dispersion relation of the Euler equations, similarly to \cite{svard2023novel} (see also previous section). We pick the gravitational constant as $g = 9.81$. The reference water height $h_0 = \eta_0$ is given by 0.8, and the initial condition by
\begin{align*}
    \eta(t = 0, x) &= \begin{cases}
        \eta_0 + A\cos(k(x - \tilde x)),&\text{ if } -34.5\frac{\pi}{k} < x - \tilde x < -4.5\frac{\pi}{k},\\
        \eta_0,&\text{ else},
    \end{cases}\\
    v(t = 0, x) &= \sqrt{\frac{g}{k}\tanh(kh_0)}\frac{\eta - \eta_0}{h_0},
\end{align*}
with wavenumber $k\approx 0.84$ satisfying the dispersion relation
$\omega^2 = gk\tanh(kh_0)$
for the angular frequency $\omega = \frac{2\pi}{2.02\sqrt{2}}$. The value of $\tilde x$ is calibrated such that the time shift between the initial condition and the waves from the experimental data is compensated. For the BBM-BBM equations we take $\tilde x = 2.7$ and for the Svärd-Kalisch equations $\tilde x = 2.2$. The choice of the initial conditions for the total water height corresponds to 15 waves that transition continuously into a constant state. We run the setup in a periodic domain $x\in[-138, 46]$ up to a time $t = 70$. For this example, we apply periodic boundaries. Compared to other benchmarks from the literature \cite{madsen1996boussinesq,israwi2023equations,kazolea2023full}, we extended the domain to the left to avoid any initial interaction between the waves and the topography.

\begin{figure}[htbp]
	\centering
	\includegraphics[width=1.0\textwidth]{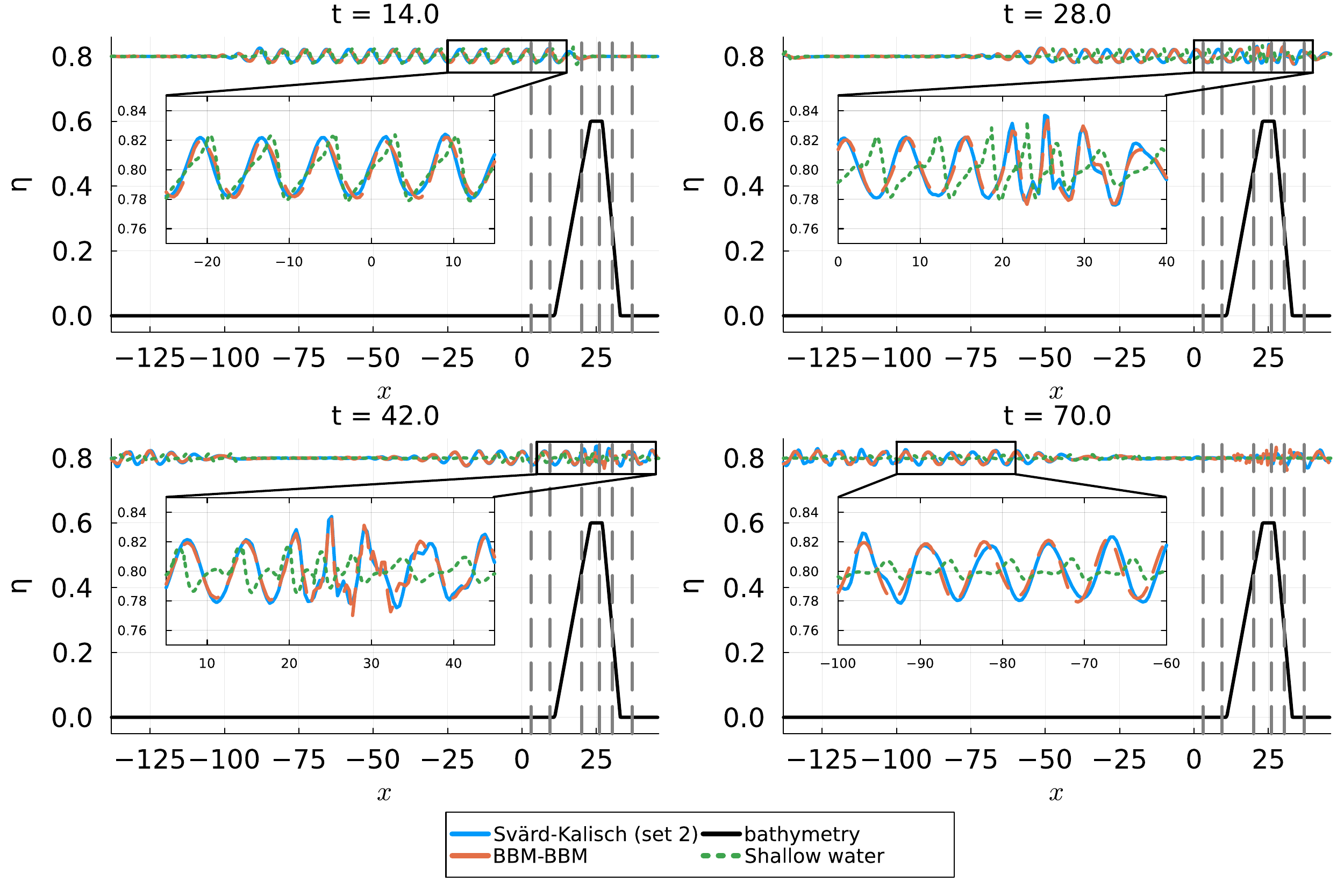}
	\caption{Water height at different points in time and for different models}
	\label{fig:waterheight-over-time-dingemans}
\end{figure}

\Cref{fig:waterheight-over-time-dingemans} shows the computed solutions for the SWEs, the BBM-BBM equations, and the Svärd-Kalisch equations at four different points in time for $N = 512$ points in space (DOFs).
For the BBM-BBM equations and for the Svärd-Kalisch equations, we use periodic central finite
difference SBP operators of accuracy order $4$ with the semidiscretizations given by
\eqref{eq:semi-BBM-BBM1}--\eqref{eq:semi-BBM-BBM2} and \eqref{eq:semi-SK1}--\eqref{eq:semi-SK2},
respectively. Note that the initial condition is shifted around $0$ for the BBM-BBM equations
because we have only derived numerical methods for $\eta_0 = 0$. In the plots, the solution is
then shifted back. For the simulation of the SWEs we apply Trixi.jl
\cite{schlottkelakemper2020trixi,schlottkelakemper2021purely,ranocha2022adaptive}, which uses
the DGSEM with polynomials of degree 3 in space. As surface flux we take the energy-dissipative flux from \cite{audusse2004fast}
and as volume flux we use the numerical flux function presented in \cite{wintermeyer2017entropy}. Both the
Svärd-Kalisch and the BBM-BBM equations give similar results, while the SWEs fail to model the dispersive waves. In
\Cref{fig:waterheight-over-time-dingemans}, we use the parameter set~2 for the
Svärd-Kalisch equations, but qualitatively similar results are also obtained for the sets 3, 4, and 5.
Sets 3 and 4, however, produce slightly too fast waves because for the wavenumber $k\approx 0.84$
the dispersion relation of set~2 is closer to the one of the Euler equations, see
\Cref{fig:disp-rels}. In the following, we take a closer look at the numerical solution
of the Svärd-Kalisch equations, where we use set~2 if not stated otherwise.

\begin{figure}[htbp]
	\centering
	\includegraphics[width=0.9\textwidth]{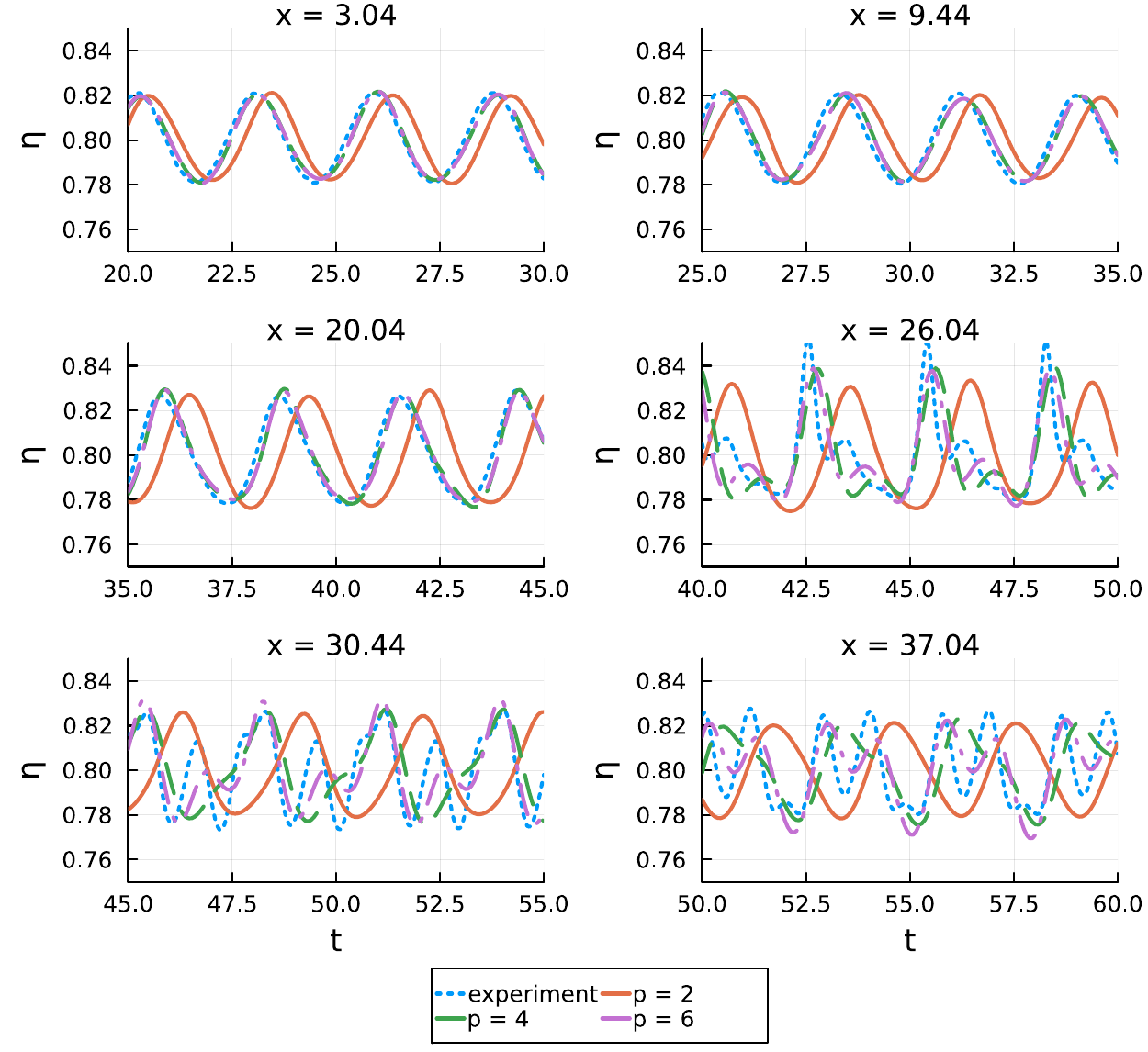}
	\caption{Water height at different locations for different orders of accuracy for the Svärd-Kalisch equations}
	\label{fig:waterheight-at-x-accuracy-orders-dingemans}
\end{figure}

During the experiment by Dingemans, measurements were taken at different positions indicated by the gray dashed lines in \Cref{fig:waterheight-over-time-dingemans}. The temporal development of the numerical solution of the Svärd-Kalisch equations at these points are shown in \Cref{fig:waterheight-at-x-accuracy-orders-dingemans} for different orders of accuracy $p$ with 512 nodes in space.
Comparing to the data obtained from the experiment, we can see that the numerical solution becomes much more precise by increasing the order of the discretization. While the authors of \cite{svard2023novel} use 36800 points in their finest mesh, the present discretization only needs 512 points to obtain similar good results if an order of accuracy of 4 or 6 is applied.

\begin{figure}[htbp]
    \centering
    \includegraphics[width=0.9\textwidth]{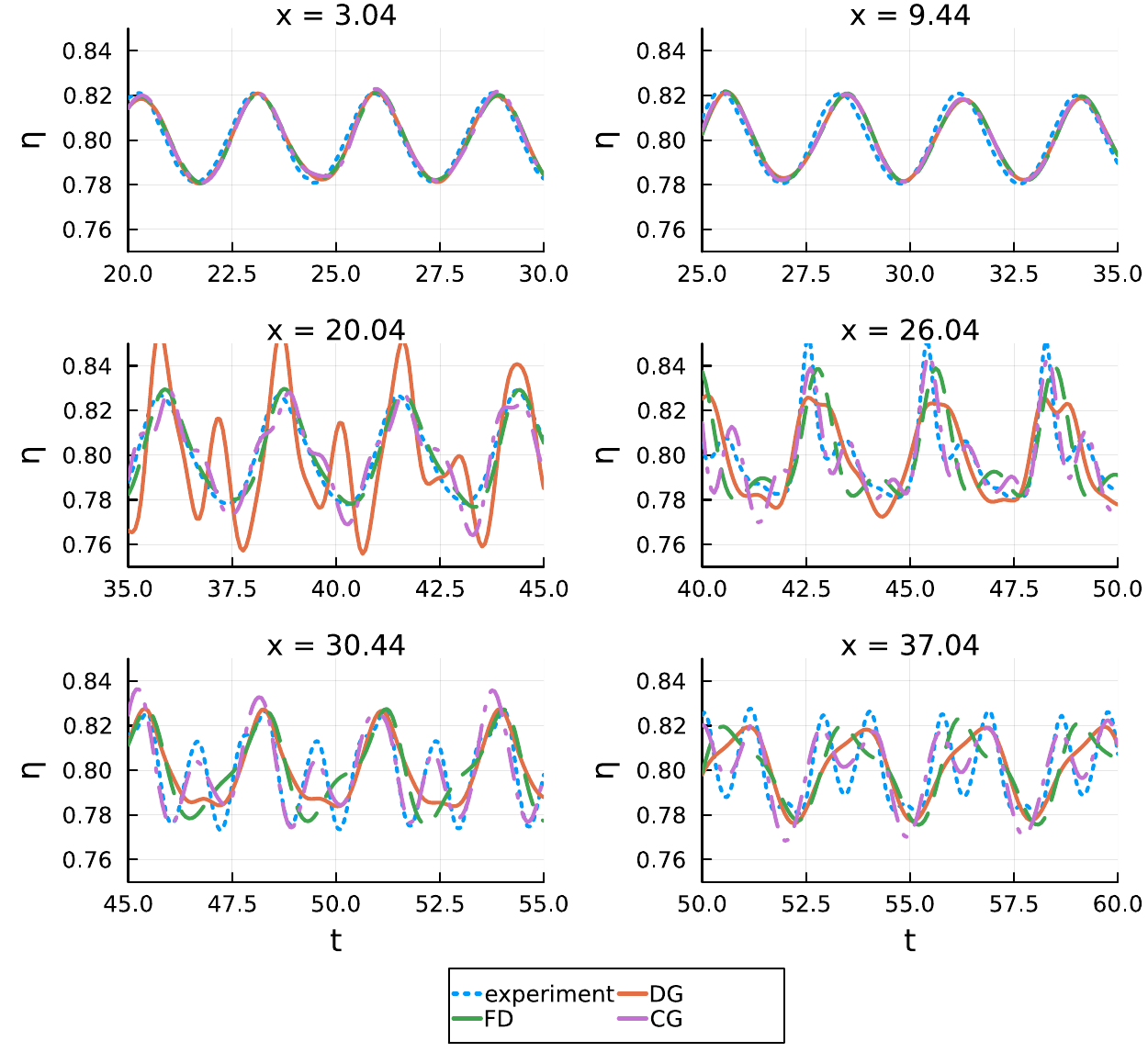}
    \caption{Water height at different locations for different solver types}
    \label{fig:waterheight-at-x-solver-types-dingemans}
\end{figure}

Next, we consider different solver types. The different results using narrow-stencil finite difference operators (FD) and coupled operators obtained from the nodal discontinuous Galerkin (DG) and the nodal continuous Galerkin (CG) method are shown in \Cref{fig:waterheight-at-x-solver-types-dingemans}.
All of the methods produce reasonable solutions. The CG method records the most details, which can especially be seen for the last two wave gauges, where the CG method is able to produce the smaller amplitude between two larger amplitudes for each wave.

\begin{figure}[htbp]
    \centering
    \includegraphics[width=0.9\textwidth]{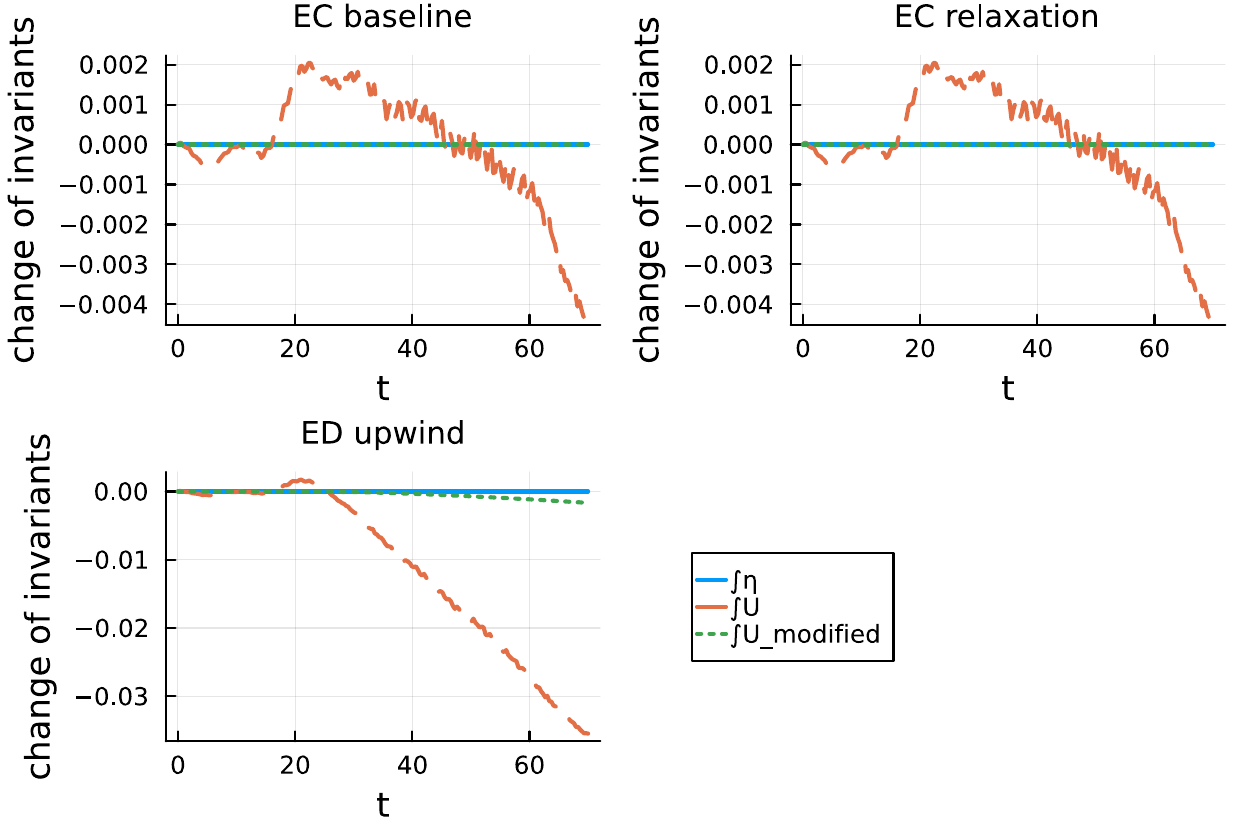}
    \caption{Development of linear and nonlinear invariants, top left: entropy-conservative (EC) scheme without RRK, top right: EC scheme with RRK, bottom left: entropy-dissipative (ED) scheme}
    \label{fig:invariants-ec-dingemans}
\end{figure}

Finally, we compare the two different semidiscretizations presented in \Cref{sect:num-svard-kalisch}. In particular, we compare the solutions of the semidiscretization \eqref{eq:semi-SK1}--\eqref{eq:semi-SK2} with finite difference operators without and with relaxation (applied to the total modified entropy) and the upwind semidiscretization \eqref{eq:semi-up-SK1}--\eqref{eq:semi-up-SK2} with finite difference operators. The first is entropy-conservative on the semidiscrete level, the second is entropy-conservative on the fully-discrete level, and the third is entropy-dissipative on the semidiscrete level, where the notion ``entropy-conservative (dissipative)'' refers to the total modified entropy. \Cref{fig:invariants-ec-dingemans} shows the total mass $\cM$, the total entropy $\cE$, and the total modified entropy $\hat\cE$ over time. All
solutions conserve the total mass exactly. The entropy-conservative semidiscretization without relaxation (upper left) already conserves the total modified entropy up to 7 digits. Using relaxation (upper right), the total modified entropy can be conserved exactly. In both cases, the total entropy $\cE$ changes up to 3 digits. For the (modified) entropy-dissipative semidiscretization of the Svärd-Kalisch equations based on upwind operators (bottom left), the dissipation is clearly observed in the modified total entropy and also in the usual total entropy.

\begin{figure}[htbp]
	\centering
	\includegraphics[width=0.9\textwidth]{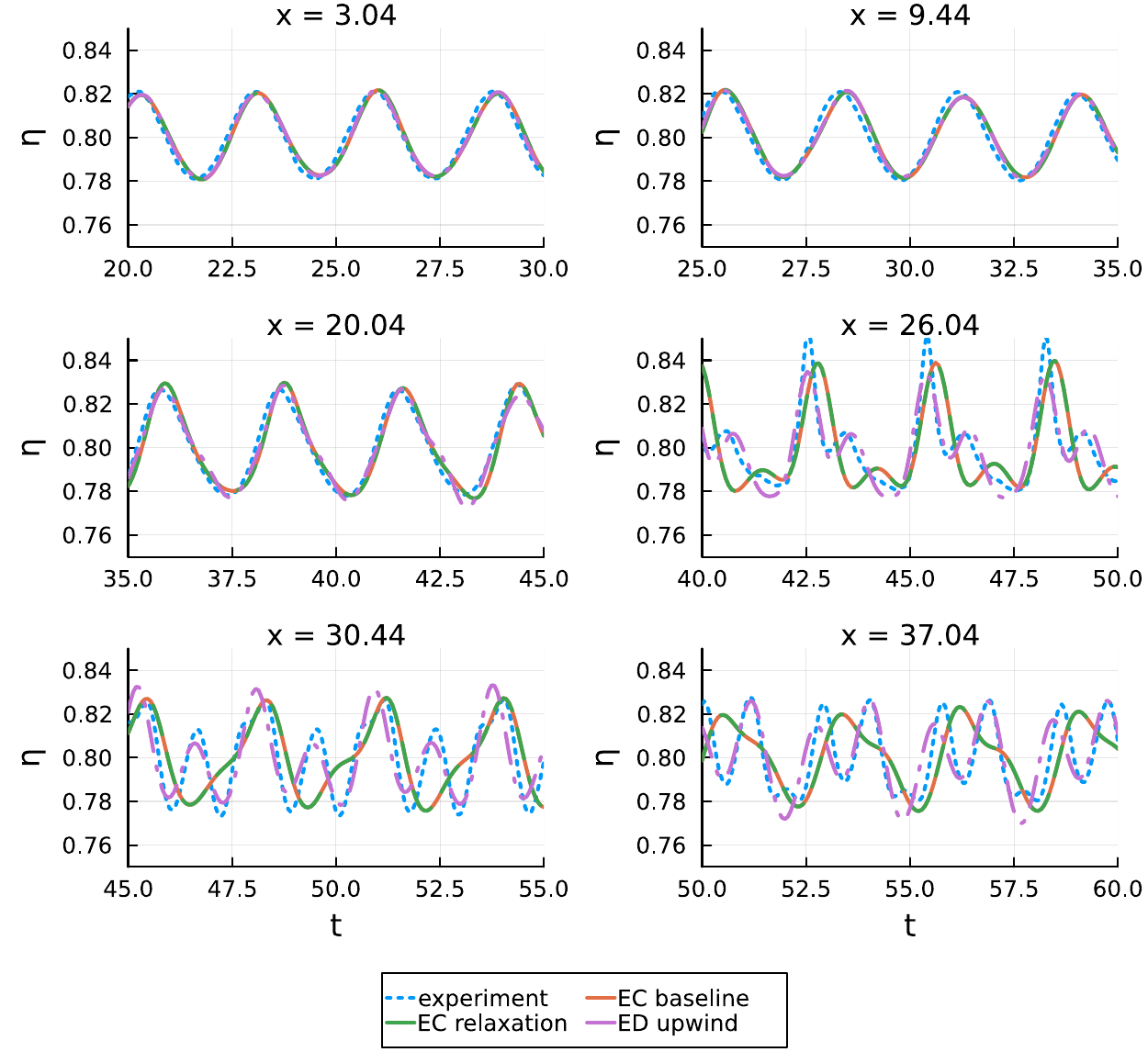}
	\caption{Water height at different locations for the entropy-conservative (EC) scheme with and without relaxation, and the entropy-dissipative (ED) scheme}
	\label{fig:waterheight-at-x-ec-dingemans}
\end{figure}

For completeness, we also compare the solutions at the different positions in space, which are depicted in \Cref{fig:waterheight-at-x-ec-dingemans}.
While the difference between the baseline and the relaxation entropy-conservative schemes is indistinguishable, the entropy-dissipative scheme is able to capture much more details. This indicates that, in order to obtain higher derivative SBP operators, it can be beneficial to replace consecutive central first-derivative SBP operators leading to wide-stencil operators by consecutive first-derivative upwind operators in different directions, which results in an operator with narrower stencil. By increasing the order of accuracy to $p = 6$ and the number of discretization points (DOFs) to $N = 1024$, we demonstrate in \Cref{fig:waterheight-at-x-order6-upwind-dingemans} that the solution obtained from the dissipative upwind scheme matches the experimental data remarkably well.
The figure also shows the solution using the same discretization, but applying the isotropic set~5. For this application set~2 performs better, but using set~2 also leads to reasonable solutions especially at the first 4 wave gauges.
\begin{figure}[ht]
	\centering
	\includegraphics[width=0.9\textwidth]{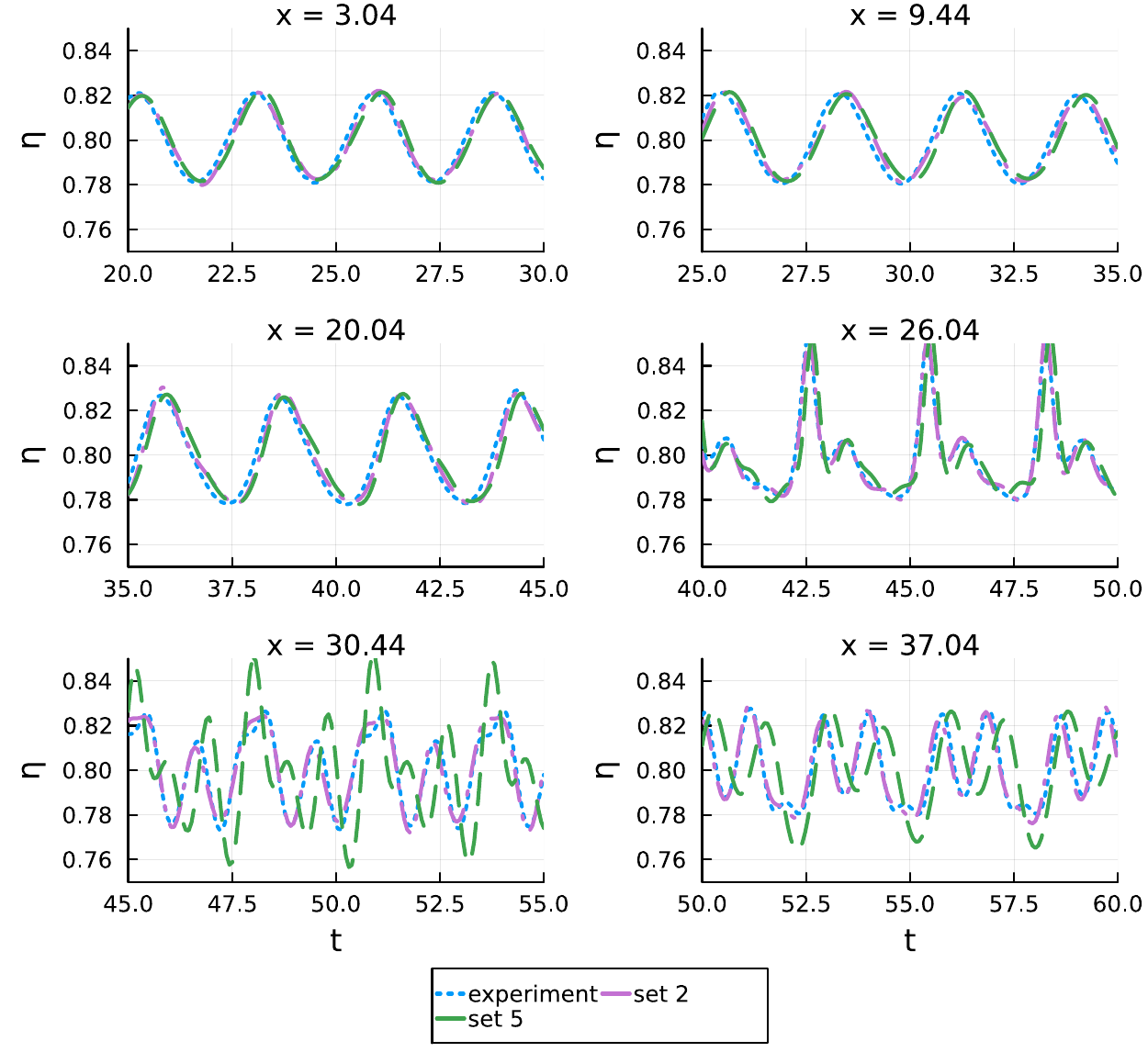}
	\caption{Water height at different locations for the entropy-dissipative scheme, $p = 6$, and $N = 1024$}
	\label{fig:waterheight-at-x-order6-upwind-dingemans}
\end{figure}

For comparison, the same setup is also solved with a semidiscretization that is neither entropy-conservative nor entropy-dissipative, e.g., using \eqref{eq:semi-SK1}--\eqref{eq:semi-SK2}, but without the split forms (i.e., replacing $\frac{1}{2}(D_1(\bm h\bm v^2) + \bm h\bm vD_1\bm v + \bm vD_1(\bm h\bm v))$ by $D_1(\bm h\bm v^2)$, and $\frac{1}{2}(D_1(\bm v\bm y) + \bm vD_1\bm y + \bm yD_1\bm v)$ by $D_1(\bm v\bm y)$ in \eqref{eq:semi-SK2}). In that case, the changes over time of the total entropy and the total modified entropy are considerably higher (up to 3 orders of magnitude for the modified entropy), as expected from the entropy analysis.

For completeness, we briefly show the results obtained by the solution of the BBM-BBM equations in \Cref{fig:waterheight-at-x-accuracy-orders-dingemans-BBM-BBM}.
\begin{figure}
	\centering
	\includegraphics[width=0.9\textwidth]{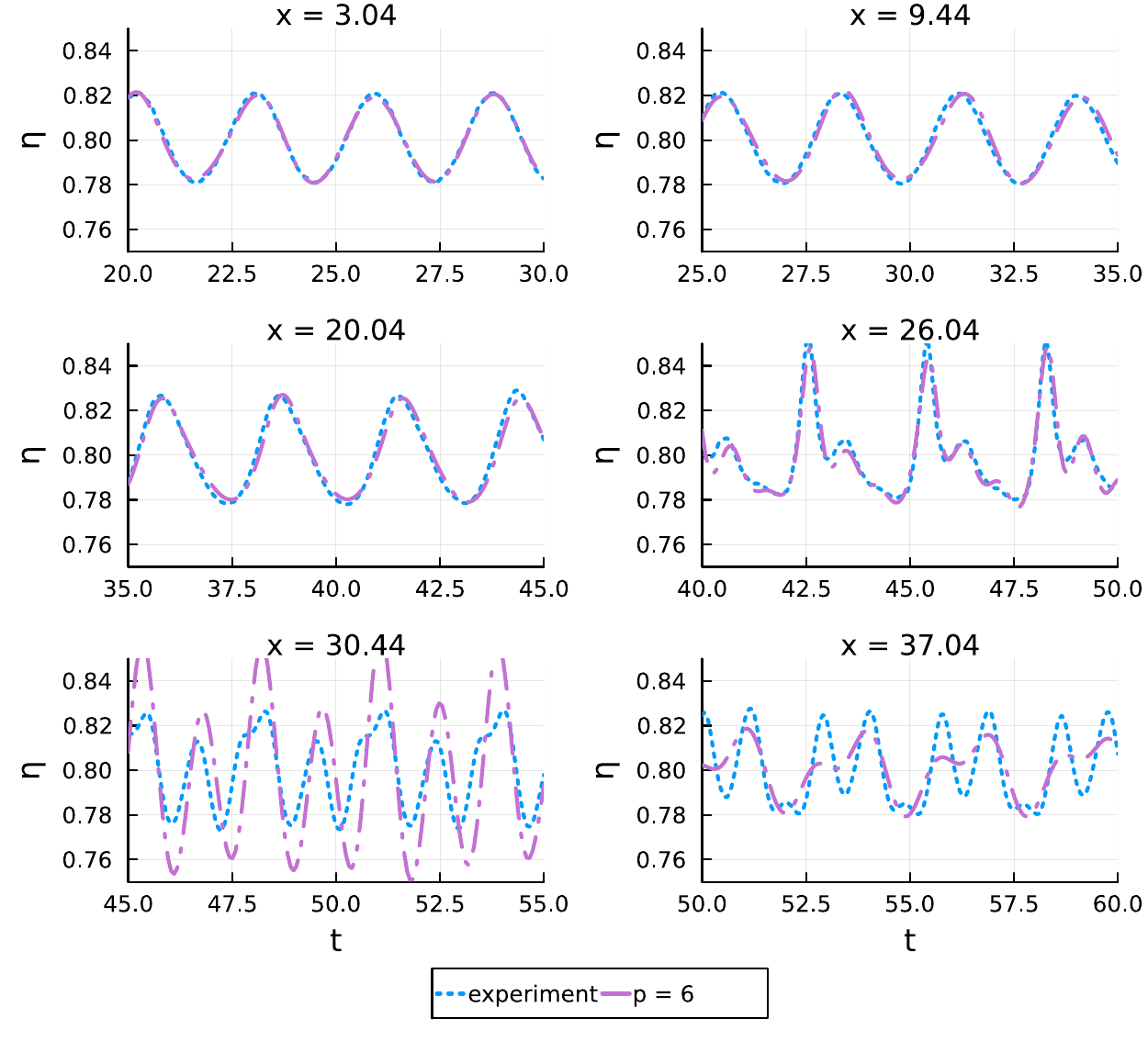}
	\caption{Water height at different locations for an order of accuracy $p = 6$ obtained by upwind discretization of the BBM-BBM equations with $N = 1024$}
	\label{fig:waterheight-at-x-accuracy-orders-dingemans-BBM-BBM}
\end{figure}
While the main features of the waves are captured by the solution, it contains less details than the solution of the Svärd-Kalisch equations, especially for the last two wave gauges.

\section{Summary and conclusions}\label{sect:conclusions}

We developed structure-preserving numerical methods for two nonlinear dispersive
systems modeling shallow water waves: the BBM-BBM equations
\cite{israwi2021regularized,israwi2023equations} and the model proposed recently
by Svärd and Kalisch \cite{svard2023novel,svard2025novel}. To obtain provably
energy-conserving or energy-stable schemes for periodic and reflecting wall
boundary conditions, we used summation-by-parts (SBP) operators in space and
relaxation methods in time.

We compared the two models in some numerical experiments including the experimental
setup of Dingemans \cite{dingemans1994comparison,dingemans1997water}. For these
small-amplitude waves, the (generalized) Svärd-Kalisch equations yield more accurate
results than the BBM-BBM equations as expected based on their linear dispersion relation.
In general, we have observed that applying upwind operators for higher-order derivatives
improves the approximations compared to using central finite difference operators.
The numeriacl results demonstrate the advantage of energy-conserving methods for long-term
simulations.

\section*{Acknowledgments}

JL acknowledges the support by the Deutsche Forschungsgemeinschaft (DFG)
within the Research Training Group GRK 2583 ``Modeling, Simulation and
Optimization of Fluid Dynamic Applications''.
HR was supported by the Deutsche Forschungsgemeinschaft
(DFG, German Research Foundation, project number 513301895)
and the Daimler und Benz Stiftung (Daimler and Benz foundation,
project number 32-10/22).

\printbibliography

\end{document}